\newtheorem{main thm}{Theorem}
\newtheorem{df}{\quad Definition}[subsection]
\newtheorem{thm}[df]{Theorem}
\newtheorem{lem}[df]{Lemma}
\newtheorem{cor}[df]{Corollary}
\newtheorem{prop}[df]{Proposition}
\newtheorem{rem}[df]{Remark}
\newcommand{\Z}{\mathbb{Z}}
\newcommand{\Q}{\mathbb{Q}}
\newcommand{\Zp}{\mathbb{Z}_p}
\newcommand{\Fp}{\mathbb{F}_p}
\newcommand{\p}{\mathfrak{p}}
\newcommand{\q}{\mathfrak{q}}
\renewcommand{\P}{\mathfrak{P}}
\newcommand{\alp}{\alpha}
\newcommand{\gam}{\gamma}
\newcommand{\Gam}{\Gamma}
\newcommand{\del}{\delta}
\newcommand{\Del}{\Delta}
\newcommand{\ep}{\varepsilon}
\newcommand{\Gal}{{\rm Gal}}
\newcommand{\Hom}{{\rm Hom}}
\newcommand{\Ker}{{\rm Ker}}
\newcommand{\Coker}{{\rm Coker}}
\newcommand{\rank}{{\rm rank}}
\newcommand{\longto}{\longrightarrow}
\newcommand{\ord}{{\rm ord}}
\newcommand{\isom}{\simeq}
\newcommand{\ki}{k_{\infty}}
\newcommand{\Fi}{F_{\infty}}
\newcommand{\kcyc}{k_{\rm cyc}}
\newcommand{\x}{\times}
\newcommand{\op}{\oplus}
\newcommand{\ox}{\otimes}
\newcommand{\inj}{\hookrightarrow}
\newcommand{\surj}{\twoheadrightarrow}
\renewcommand{\tilde}{\widetilde}
\DeclareMathOperator*{\zetaprod}{{\displaystyle{\prod\kern-1.45em\coprod}}}
\newcommand{\D}[1]{D_{#1}}
\newcommand{\Do}[1]{D_{\overline{#1}}}
\newcommand{\I}[1]{I_{#1}}
\newcommand{\Io}[1]{I_{\overline{#1}}}
\begin{document}
\begin{center}
{\Large {On the triviality of the unramified Iwasawa modules 
\\
of the maximal multiple $\mathbb{Z}_p$-extensions}
}\\
\medskip{Keiji OKANO
\footnote{
%Keiji OKANO,
%\\
Department of Teacher Education,
Tsuru University,
%\\
3-8-1 Tahara, Tsuru-shi, Yamanashi 402-0054, Japan.
email:
{\tt 
okano@tsuru.ac.jp
}
}
}
\footnote[0]{
2010 \textit{Mathematics Subject Classification}. 
%Primary 
11R23, 
%Secondary 
11R37.} 
\footnote[0]{\textit{Key Words}: multiple $\Zp$-extensions, Iwasawa modules, unramified extension.}
\end{center}

%%%%%%%%%%%%%%%%%%%%%%%%%%%%%%%%%%%%%%%%%%%%%%%%%%%%%%%%%%

\begin{abstract}
For a number field $k$ and an odd prime number $p$, we consider the maximal multiple $\Zp$-extension $\tilde{k}$ of $k$ and the unramified Iwasawa module $X(\tilde{k})$, 
which is the Galois group of the maximal unramified abelian $p$-extension of $\tilde{k}$. 
In this article, we classify the CM-fields $k$ in which $p$ splits completely and for which $X(\tilde{k}) = 0$.
In addition, we provide an alternative proof of the sufficient condition for $X(\tilde{k})=0$, based on the ideas of Minardi, Itoh, and Fujii in the study of the generalized Greenberg conjecture.
\end{abstract}

%%%%%%%%%%%%%%%%%%%%%%%%%%%%%%%%%%%%%%%%%%%%%%%%%%%%%%%%%%

\section{Introduction}\label{section-intro}
Let $p$ be an odd prime number, and let $k$ be a finite algebraic number field.
A $\Zp$-extension $\ki$ of $k$ is a Galois extension of $k$ whose Galois group $\Gal(\ki/k)$ is isomorphic to the additive group of $p$-adic integers $\Zp$.
The maximal unramified abelian $p$-extension of $\ki$ is one of the central objects in classical Iwasawa theory.
As a generalization, we consider the maximal multiple $\Zp$-extension $\tilde{k}$ of $k$ and the Galois group $X(\tilde{k})$ of the maximal unramified abelian $p$-extension of $\tilde{k}$.
We refer to $X(\tilde{k})$ as the unramified Iwasawa module of $\tilde{k}$.
It is known that $X(\tilde{k})$ is a finitely generated torsion $\Zp \llbracket \Gal(\tilde{k}/k) \rrbracket$-module,
where $\Zp \llbracket \Gal(\tilde{k}/k) \rrbracket$ denotes the completed group ring.
Greenberg conjectured in \cite{Greenberg2001} that $X(\tilde{k})$ is a pseudo-null $\Zp \llbracket \Gal(\tilde{k}/k) \rrbracket$-module.
In other words, the conjecture states that the annihilator ideal of $X(\tilde{k})$ is not contained in any height-one ideal of $\Zp \llbracket \Gal(\tilde{k}/k) \rrbracket$.
This conjecture is usually referred to as the Generalized Greenberg Conjecture (GGC, for short).
Roughly speaking, it suggests that $X(\tilde{k})$ is ``small'' in a certain sense. 
In fact, if we specialize $k$ to totally real fields that satisfy Leopoldt's conjecture for $p$, then $\tilde{k}$ coincides with the cyclotomic $\Zp$-extension $\kcyc$ of $k$, and therefore
the conjecture implies that the unramified Iwasawa module of $\kcyc$ should be finite
(this is the original Greenberg conjecture proposed in \cite{Greenberg1976}).

There are many results that support the original and the generalized Greenberg conjecture, 
as well as significant results that follow from these conjectures.
Here, we review some results concerning the GGC.
Minardi \cite{Minardi}, Itoh \cite{Itoh2011}, and Fujii \cite{Fujii2015} provided conditions under which $X(\tilde{k})$ is pseudo-null for CM-fields $k$ in which $p$ splits completely.
(The ideas in their work will be used in \S\ref{another proof in case non-Gal deg 4} and \S\ref{Proof in the case deg 6 |K| notequiv 0} of this paper.)
Moreover, as an application, Fujii \cite{Fujii2022} constructed an infinite family of fields $k$ satisfying both the GGC and the condition $X(\tilde{k}) \neq 0$.
A weak version of the GGC was also proposed by Nguyen Quang Do \cite{Nguyen Quang Do2006, Nguyen Quang Do2017} and Wingberg \cite{Wingberg Free pro-p extensions}.
It asserts that if $X(\tilde{k}) \neq 0$, then it contains a nontrivial pseudo-null submodule. 
Fujii \cite{Fujii2010} and Kleine \cite{Kleine2022} investigated the relationship between the weak GGC and the GGC.
In addition, \cite{Fujii2010}, Shirakawa \cite{Shirakawa}, and Murakami \cite{Murakami2023} provided many examples that satisfy the weak GGC and the GGC.
On the other hand, Kurihara (unpublished) and Kataoka \cite{Kataoka2022} proved that if $k$ is an imaginary quadratic field in which $p$ splits completely, then $X(\tilde{k})$ has no nontrivial finite submodule.
These results suggest that the unramified Iwasawa modules $X(\tilde{k})$ may not be particularly small.

These studies on the size of the unramified Iwasawa module $X(\tilde{k})$ naturally lead to the following question: 
When $X(\tilde{k})$ is trivial? 
In the case where $k$ is totally real and satisfies Leopoldt's conjecture for $p$ (so that $\tilde{k}=\kcyc$), 
there are known conditions under which the unramified Iwasawa module $X(\kcyc)$ is trivial.
For example, Iwasawa's criterion \cite{Iwasawa1956} (see also Washington \cite[Theorem 10.1]{Was}) provides a necessary and sufficient condition for $X(\kcyc)=0$ under certain assumptions.
Fukuda's criterion \cite{Fukuda1994} also provides a sufficient condition for $X(\kcyc)=0$.
Yamamoto \cite{Yamamoto2000} determined all absolutely abelian $p$-extensions $k$ for which $X(\kcyc)$ vanishes.
Moreover, there are several results concerning the density of real quadratic fields $k$ for which $X(\kcyc)$ vanishes (see Taya and Yamamoto \cite{Taya-Yamamoto2010} and the references therein).
We note that in the case $p=2$, 
Mouhib and Movahhedi \cite{Mouhib-Movahhedi2011} 
listed real quadratic fields $k$ for which $X(\kcyc)$ either vanishes or is cyclic.
In addition, Taya and Yamamoto \cite{Taya-Yamamoto2006} determined all real abelian $2$-extensions $k$ for which $X(\kcyc)$ vanishes.

In contrast to the totally real case discussed above, 
few results are known when $k$ is a CM-field.
Indeed, in this case, the known results may be the only ones that are readily accessible.
For example, when $k$ is an imaginary quadratic field, we refer the reader to Theorem~\ref{main thm}(I) below and to \cite[Theorem 1.6]{MOMO2021}.
We note that in both the totally real and CM cases where $k$ is completely decomposed at $p$, Jaulent \cite{Jaulent2024} provided necessary conditions for $X(\tilde{k})$ to be trivial.
Therefore, we aim to determine the CM-fields $k$ for which $X(\tilde{k})=0$.
In this paper, we give a complete classification of CM-fields $k$ in which $p$ splits completely and for which $X(\tilde{k})=0$.
%In this paper, we treat CM-fields $k$ that are decomposed completely at $p$
%and give some conditions which $X(\tilde{k})$ is to be trivial.
We note that for such CM-fields $k$, the triviality of $X(\tilde{k})$ implies 
that the maximal unramified $p$-extension of $\kcyc$ is abelian.
Finally, we note that our result in this paper does not require the assumption that the GGC holds; rather, it provides a new example that satisfies the GGC.

\subsection{Main theorem}
\label{subsection main thm}

Let $p$ be an odd prime number, and $k$ a CM-field in which $p$ splits completely.
Assume that Leopoldt's conjecture holds for $k$ and $p$.
Let $A(k)$ denote the $p$-Sylow subgroup of the ideal class group of $k$.
%, and let $D(k)$ denote the subgroup of $A(k)$ consisting of ideal classes containing a power of a prime ideal above $p$.
Let $\kcyc$ be the cyclotomic $\Zp$-extension of $k$ and $\tilde{k}$ the maximal multiple $\Zp$-extension of $k$.
We denote by $X(\kcyc)$ and $X(\tilde{k})$ the Galois groups of the maximal unramified abelian $p$-extensions of $\kcyc$ and $\tilde{k}$, respectively.
For a prime ideal $\P$ of $k$ lying above $p$, denote its complex conjugate by $\overline{\P}$.
Moreover, for a set $S$ of prime ideals of $k$ lying above $p$, 
denote by $M_p^S(k)$ the maximal abelian $p$-extension of $k$ that is unramified outside $p$ and in which every prime ideal in $S$ splits completely.
%Moreover, to describe the main theorem, we define the sets $S$, $T$ of the prime ideals in $k$ above $p$ as follows:
%\begin{itemize}
%\setlength{\parskip}{0pt} % 段落間
%\setlength{\itemsep}{0pt} % 項目間
%\item[$\bullet$]
%In the case $[k:\Q]=4$:
%according to the prime decomposition $(p)=\p \q \overline{\p} \overline{\q}$ of $p$ in $k$,
%define $T=\{\p, \overline{\p} \}$, $S=\{\q, \overline{\q} \}$.
%\item[$\bullet$]
%In the case $[k:\Q]=6$:
%according to the prime decomposition $(p)=\p_1 \p_2 \p_3 \overline{\p_1}\; \overline{\p_2}\; \overline{\p_3} $ of $p$ in $k$,
%define $T=\{\p_1, \p_2 \}$, $S=\{ \p_3, \overline{\p_1}, \overline{\p_2}, \overline{\p_3} \}$.
%\end{itemize}
Our main theorem is as follows.

\begin{thm}\label{main thm}
Let $p$ be an odd prime number, and $k$ a CM-field in which $p$ splits completely and which satisfies Leopoldt's conjecture for $k$ and $p$.
If $X(\tilde{k})=0$, then $[k:\Q] \le 6$.
Furthermore, depending on the degree of $k$, the following hold:
\begin{itemize}
\setlength{\parskip}{0pt} % 段落間
\setlength{\itemsep}{0pt} % 項目間
\item[{\rm (I)}]
In the case where $[k:\Q]=2${\rm :}
$X(\tilde{k})=0$ if and only if $X(\kcyc) \isom \Zp$.
\item[{\rm (II)}]
In the case where $[k:\Q]=4${\rm :}
Let $(p)=\p \q \overline{\p} \overline{\q}$ be the prime decomposition of $p$ in $k$,
and define $S=\{\p, \overline{\p} \}$.
Then $X(\tilde{k})=0$ if and only if all of the following conditions {\rm (i)}, {\rm (ii)}, and {\rm (iii)} hold:
\begin{itemize}
\setlength{\parskip}{0pt} % 段落間
\setlength{\itemsep}{0pt} % 項目間
\item[{\rm (i)}]
$A(k)=0$ or $A(k)$ is cyclic.
\item[{\rm (ii)}]
$X(\kcyc) \isom \Zp^{\op [k:\Q]/2}$ as a $\Zp \llbracket \Gal(\kcyc/k) \rrbracket$-module.
\item[{\rm (iii)}]
$M_p^S(k)=k$.
%There is no nontrivial $p$-extension of $k$ that is unramified outside $S$ and completely decomposes at every prime ideal in $T$.
\end{itemize}
In fact, when $A(k)$ is cyclic, if conditions {\rm (i)} and {\rm (ii)} hold, then condition {\rm (iii)} also holds.
Therefore, in this case, condition {\rm (iii)} can be omitted.
\item[{\rm (III)}]
In the case where $[k:\Q]=6${\rm :}
Let $(p)=\p_1 \p_2 \p_3 \overline{\p_1}\; \overline{\p_2}\; \overline{\p_3} $ be the prime decomposition of $p$ in $k$, and define $S_1=\{\p_1, \overline{\p_1} \}$ and $S_2=\{ \p_2, \overline{\p_2} \}$.
Then $X(\tilde{k})=0$ if and only if all of the following conditions {\rm (i)}, {\rm (ii)}, and {\rm (iii)} hold:
\begin{itemize}
\setlength{\parskip}{0pt} % 段落間
\setlength{\itemsep}{0pt} % 項目間
\item[{\rm (i)}]
$A(k)=0$ or $A(k)$ is cyclic.
\item[{\rm (ii)}]
$X(\kcyc) \isom \Zp^{\op [k:\Q]/2}$ as $\Zp \llbracket \Gal(\kcyc/k) \rrbracket$-modules.
\item[{\rm (iii)}]
In the compositum $M_p^{S_1}(k)M_p^{S_2}(k)/k$, the prime ideal $\p_3$ does not split.
\end{itemize}
\end{itemize}
%Here, $S$ and $T$ are defined according to the degree $[k:\Q]$ as above.
\end{thm}

\begin{rem}\label{remark mark of main thm}
{\rm 
Case (I) is well known.
In addition, Jaulent \cite{Jaulent2024} provides a computational method for detecting when $X(\kcyc) \isom \Zp$.
Conditions (iii) in both cases (II) and (III) are equivalent to the statement that Scholz's number knot $\mathcal{K}(\Gal(L(\kcyc)/k))$ of $\Gal(L(\kcyc)/k)$, which appears in central class field theory and is defined in Proposition~\ref{central}, 
is trivial
(see Lemma~\ref{mathcal{K} in case non-Gal deg 4} and Lemma~\ref{|K| not cong 0 iff}(i)).
Some of the conditions in the theorem are difficult to verify in practice.
Although the problem of finding computational methods to check these conditions remains,
the classification itself is complete.

In fact, using Kida's formula in Iwasawa theory, 
the special case of Theorem~\ref{main thm}(III) with $p = 3$ and $k$ a totally imaginary abelian extension of degree $6$ was proved in \cite{Okano2012-1}.
On the other hand, when $p=3$, $[k:\Q]=6$, and $A(k)$ is cyclic, 
if conditions (i) and (ii) hold, then condition (iii) also holds.
Therefore, similarly to the case where $[k:\Q]=4$ and $A(k)$ is cyclic, condition (iii) can be omitted in this case
(see Remark~\ref{in case deg 6 dim=1 p=3}).
}
\end{rem}

%\begin{rem}\label{remark mark of main thm}
{%\rm 
%The case (I) is already known.
%In addition, Jaulent \cite{Jaulent2024} gives a method for finding the case $X(\kcyc) \isom \Zp$.
%On the other hand, 
%in the case (III), sufficient conditions for $X(\tilde{k})=0$ are still unknown.
%However, in the case where $p=3$ and $k$ is a totally imaginary abelian extension with degree $6$, it is shown that the converse of the assertion in (III) holds by \cite{Okano2012-1}.
%using central class field theory and Kida's formula in Iwasawa theory, 
%the special case of Theorem \ref{main thm}, where $p=3$ and $k$ is a totally imaginary abelian extension with degree $6$, is shown by \cite{Okano2012-1}.
%Moreover, in the case where $p=3$, $[k:\Q]=6$ and $A(k)$ is cyclic, 
%if (i)(ii)(iii) hold then (iv) also does.
%Therefore, similar to the case where $[k:\Q]=4$ and $A(k)$ is cyclic, we can remove (iv) in this case.
%(see Remark \ref{in case deg 6 dim=1 p=3}).
%}
%\end{rem}

\subsection{Organization of the paper}

\S\ref{notation} introduces the notation and basic facts frequently used throughout this paper. 
In \S\ref{Theorems in central p-class field theory}, we present central class field theory,
which is used to prove the sufficient condition in Theorem \ref{main thm}.
%, although the main theorem can also be proved without it.  
In \S\ref{section anticyclotomic}, we provide a tool for proving the necessary condition in Theorem~\ref{main thm}.
In \S\ref{Narrowing-down target}, we narrow down the number of cases to be considered.
In \S\ref{section deg 4} and \S\ref{section deg 6}, we prove the main results for the cases of degrees $4$ and $6$, respectively.
Although the results in the case of degree $4$ are not used in the degree $6$ case,
the basic idea presented in \S\ref{section deg 4} is also applied in \S\ref{section deg 6}.
In \S\ref{another proof in case non-Gal deg 4} and \S\ref{Proof in the case deg 6 |K| notequiv 0}, we provide alternative proofs of the sufficient conditions for the main results in the cases of degrees $4$ and $6$, respectively, whose ideas are based on those of the GGC.

%%%%%%%%%%%%%%%%%%%%%%%%%%%%%%%%%%%%%%%%%%%%%%%%%%%%%%%%%%
%%%%%%%%%%%%%%%%%%%%%%%%%%%%%%%%%%%%%%%%%%%%%%%%%%%%%%%%%%
%%%%%%%%%%%%%%%%%%%%%%%%%%%%%%%%%%%%%%%%%%%%%%%%%%%%%%%%%%

\section{Preliminaries}\label{section-prelim}

\subsection{Notation}\label{notation}

Throughout the remainder of this paper, let $p$ denote an odd prime number.
We begin by introducing notation for groups and modules.
For elements $x,y, \ldots$ in a $\Zp$-module $M$, we write  
$\langle x,y,\ldots \rangle$ 
for the $\Zp$-submodule of $M$ generated by $x,y, \ldots$.
The same notation is used to denote the $\Zp$-submodule generated by elements and submodules.
For example, if $x, y \in M$ and $N \subset M$ is a submodule, 
then $\langle x,y,N \rangle$ 
denotes the $\Zp$-submodule generated by $x$, $y$, and all elements of $N$.
For an abelian group $G$ and a $G$-module $M$, 
let $M^G$ denote the maximal submodule on which $G$ acts trivially, 
and let $M_G$ denote the maximal quotient on which  $G$ acts trivially.
In particular, if $G$ is generated by complex conjugation, we denote $M^+:=M^G$ and $M^-:=M/M^+$.
Under the assumption that $p$ is odd,
$M^-$ is isomorphic to the maximal submodule of $M$ on which $G$ acts as multiplication by $-1$.

As in \S\ref{subsection main thm}, for any finite extension $F$ of $\Q$, 
let $F_{\rm cyc}$ and $\tilde{F}$ denote the cyclotomic $\Zp$-extension and the maximal multiple $\Zp$-extension of $F$, respectively.
Note that every prime ideal of $F$ lying above $p$ ramifies in $F_{\rm cyc}$
(see Neukirch, Schmidt, and Wingberg \cite[Proposition 11.1.1(ii)]{NSW}).
Let $A(F)$ be the $p$-Sylow subgroup of the ideal class group of $F$,
and $D(F)$ the subgroup of $A(F)$ consisting of ideal classes containing a power of a prime ideal lying above $p$.
Let $L(F)$ denote the maximal unramified abelian $p$-extension of $F$, and set $X(F):=\Gal(L(F)/F)$.
The same notation $L(F)$ and $X(F)$ is used when $F$ is an infinite extension of $\Q$.

Fix a topological generator $\overline{\gam} \in \Gal(F_{\rm cyc}/F)$.
Then $\Gal(F_{\rm cyc}/F)$ acts on $X(F_{\rm cyc})$ as follows.
Choose an extension $\gam \in \Gal(L(F_{\rm cyc})/F)$ of $\overline{\gam}$.
Then, for any $x \in X(F_{\rm cyc})$, define $\overline{\gam}(x):=\gam x \gam^{-1}$. 
This action is independent of the choice of $\gam$.
It is well known that $X(F_{\rm cyc})$ is a finitely generated torsion module over the completed group ring
$\Zp \llbracket \Gal(F_{\rm cyc}/F) \rrbracket$
(see \cite[Proposition 11.1.4]{NSW} or \cite[Lemma~13.18]{Was}).
For further properties of $X(F_{\rm cyc})$,
see \cite[Chapter XI]{NSW} or \cite[\S~13]{Was}.
Moreover, for a $\Zp$-extension $\Fi$, we define the $p$-split Iwasawa module $X'(\Fi)$ of $\Fi$ as the Galois group of the maximal unramified abelian $p$-extension of $\Fi$ in which every prime lying above $p$ splits completely.

Let $r_1(F)$ and $r_2(F)$ denote the number of real primes and the number of pairs of complex primes of $F$, respectively.
%Denote by $\tilde{F}$ the maximal multiple $\Zp$-extension of $F$.
By class field theory, 
$$
\Gal(\tilde{F}/F) \isom \Zp^{\op (r_2(F)+1)},
$$ 
if Leopoldt's conjecture for $F$ and $p$ holds
(see \cite[Theorem 11.1.2]{NSW} or \cite[Theorem~13.4]{Was}).
For convenience, we introduce the following notation.

\begin{df}\label{leftrightarrow}
For a closed subgroup $H$ of $\Gal(\tilde{F}/F)$ and an intermediate field $M$ of $\tilde{F}/F$, we write
$M \leftrightarrow H$
to mean that $M$ is the fixed field of $H$.
\end{df}

We now introduce the following lemmas, 
which will be used frequently in this paper.
The following lemma is well known.

\begin{lem}\label{G/H act on H}
Let $G$ be a profinite group with a normal abelian subgroup $H$.
Then $G/H$ acts on $H$ by inner automorphisms.
Furthermore, suppose that $G$ is a pro-$p$ group and that $G/H$ is isomorphic to $\Zp$.
Then $G$ is abelian if and only if this action is trivial.
\end{lem}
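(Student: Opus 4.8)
The plan is to treat the two assertions separately. For the first, I would note that since $H$ is normal in $G$, the conjugation action $g\cdot h = ghg^{-1}$ defines a continuous action of $G$ on $H$ by automorphisms; because $H$ is abelian, every element of $H$ acts trivially on $H$, so this action factors through $G/H$, which is precisely the claim that $G/H$ acts on $H$ via inner automorphism. (That the induced action is well defined and independent of the chosen coset representative is immediate from $H$ being abelian.)

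For the equivalence, the direction ``$G$ abelian $\Rightarrow$ action trivial'' is clear, since then every inner automorphism of $G$ is trivial. For the converse, suppose the action of $G/H$ on $H$ is trivial and that $G/H$ is $\Zp$-cyclic, hence topologically generated by a single element; fix $\sigma \in G$ whose image generates $G/H$ topologically. Then $H$ together with $\sigma$ generates $G$ as a topological group. By the triviality of the action, $\sigma$ centralizes $H$; combined with $H$ abelian, this shows that the abstract subgroup $\langle H, \sigma\rangle$ of $G$ is abelian. Since the commutator map $G\times G \to G$ is continuous and vanishes on the dense subgroup $\langle H, \sigma\rangle$, it vanishes on all of $G$, so $G$ is abelian.

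I do not expect a genuine obstacle here; the statement is essentially formal. The only point that requires a word of care is the last step, namely passing from ``a dense subgroup of $G$ is abelian'' to ``$G$ is abelian'', which rests on continuity of the commutator map and density of $\langle H,\sigma\rangle$ in $G$; the hypothesis that $G/H$ is $\Zp$-cyclic is used exactly to guarantee that a single lift $\sigma$ suffices to generate $G$ topologically over $H$.
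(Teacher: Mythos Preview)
Your argument is correct and complete. The paper itself does not supply a proof of this lemma, merely remarking that it is well known, so there is no approach to compare against; your treatment is exactly the standard one.
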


\begin{lem}\label{tilde{F} in L(F_inf)}
Assume that $p$ splits completely in $F$. 
Let $\Fi$ be a $\Zp$-extension of $F$ in which every prime ideal of $F$ lying above $p$ ramifies.
Then $\tilde{F}/\Fi$ is an unramified abelian $p$-extension.
In other words, $\tilde{F}$ is contained in $L(\Fi)$.
\end{lem}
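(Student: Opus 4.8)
The plan is to reduce the claim to a local statement at the primes above $p$, where the hypothesis that $p$ splits completely forces the relevant abelian pro-$p$ inertia groups to be one-dimensional. First I would record the easy structural facts. Since $\Fi$ is a $\Zp$-extension of $F$, it is contained in the maximal multiple $\Zp$-extension $\tilde{F}$; and since $\Gal(\tilde{F}/F)$ is abelian (a compositum of abelian extensions) and isomorphic to $\Zp^{d}$ for some $d \ge 1$ — in particular a finitely generated, torsion-free $\Zp$-module — the extension $\tilde{F}/\Fi$ is automatically abelian and pro-$p$, and every closed subgroup of $\Gal(\tilde{F}/\Fi)$ is again torsion-free over $\Zp$. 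What remains is to prove that $\tilde{F}/\Fi$ is unramified.

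Next I would dispose of the primes not above $p$: any $\Zp$-extension of a number field is unramified outside $p$ and, $p$ being odd, is unramified at the archimedean places as well; hence $\tilde{F}/F$, and a fortiori $\tilde{F}/\Fi$, is unramified outside the primes of $\Fi$ above $p$. So fix a prime $\mathfrak{p}$ of $F$ above $p$; by hypothesis $F_{\mathfrak{p}} = \Qp$. Fixing a prime $\mathfrak{P}$ of $\tilde{F}$ over $\mathfrak{p}$ and passing to completions, the inertia subgroup $I_{\mathfrak{p}}(\tilde{F}/F) \subseteq \Gal(\tilde{F}/F)$ is identified with the inertia subgroup of the abelian pro-$p$ extension $\tilde{F}_{\mathfrak{P}}/\Qp$. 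By local class field theory the Galois group of the maximal abelian pro-$p$ extension of $\Qp$ is the pro-$p$ completion of $\Qp^{\times} \cong \Zp^{\times} \times \Z$, which is $\cong \Zp \times \Zp$ with inertia part (the image of $\Zp^{\times}$) of $\Zp$-rank exactly $1$. Therefore $\rank_{\Zp} I_{\mathfrak{p}}(\tilde{F}/F) \le 1$.

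Then I would compare with $\Fi$. The restriction map $I_{\mathfrak{p}}(\tilde{F}/F) \to I_{\mathfrak{p}}(\Fi/F)$ is surjective, and by the hypothesis that $\mathfrak{p}$ ramifies in $\Fi/F$ the target is a nonzero closed subgroup of $\Gal(\Fi/F) \cong \Zp$, hence of $\Zp$-rank $1$. Combining with the previous bound, $\rank_{\Zp} I_{\mathfrak{p}}(\tilde{F}/F) = 1$ and the kernel of the restriction has $\Zp$-rank $0$. This kernel is precisely the inertia group $I_{\mathfrak{p}}(\tilde{F}/\Fi)$ of $\tilde{F}/\Fi$ above $\mathfrak{p}$; being a closed subgroup of the torsion-free group $\Gal(\tilde{F}/\Fi)$, rank $0$ forces it to be trivial. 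Thus $\tilde{F}/\Fi$ is unramified above $p$, hence everywhere unramified, and so $\tilde{F} \subseteq L(\Fi)$.

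I expect the main obstacle to be the careful bookkeeping of inertia groups along the tower $F \subseteq \Fi \subseteq \tilde{F}$ — in particular the identification of $\Ker\!\big(I_{\mathfrak{p}}(\tilde{F}/F) \to I_{\mathfrak{p}}(\Fi/F)\big)$ with the inertia group of $\tilde{F}/\Fi$ — together with the appeal to the structure of the local abelian pro-$p$ Galois group of $\Qp$; everything else is formal. It is worth emphasising that the hypothesis "$p$ splits completely in $F$" enters in an essential way: it is exactly $F_{\mathfrak{p}} = \Qp$ that makes the local inertia rank at most $1$, and the argument would break down over a larger local field.
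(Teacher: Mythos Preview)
Your argument is correct and is precisely the class-field-theoretic computation the paper gestures at when it writes ``This follows from class field theory and the assumption'' (referring to \cite{Jaulent2024} and \cite{Fujii2011} for details). You have simply written out the local rank-one inertia argument that those references contain, so there is nothing to add.
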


\begin{proof}
This follows from class field theory and the assumption.
% that any prime in $F$ above $p$ ramifies at $\Fi$.
For more details, see \cite[Lemme 4]{Jaulent2024} or the beginning of \S~2 in Fujii \cite{Fujii2011}, for example.
\end{proof}

\begin{cor}\label{X surj Zp^r2}
Under the notation and assumptions of Lemma~{\rm \ref{tilde{F} in L(F_inf)}}, there exists a surjective homomorphism 
$X(\Fi) \surj \Zp^{\op r_2(F)}$
of $\Zp \llbracket \Gal(\Fi/F) \rrbracket$-modules.
\end{cor}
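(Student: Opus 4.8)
The plan is to produce the surjection in two stages: first restrict $X(\Fi)$ onto $\Gal(\tilde{F}/\Fi)$, and then project the latter onto $\Zp^{\op r_2(F)}$.

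First, Lemma \ref{tilde{F} in L(F_inf)} tells us that $\tilde{F} \subseteq L(\Fi)$; since $\tilde{F}/\Fi$ is abelian (hence Galois), restriction of automorphisms gives a surjective homomorphism $X(\Fi) = \Gal(L(\Fi)/\Fi) \surj \Gal(\tilde{F}/\Fi)$. Because $L(\Fi)/F$ and $\tilde{F}/F$ are both Galois, this restriction map intertwines the conjugation actions of $\Gal(\Fi/F)$ described in \S \ref{notation}, so it is a morphism of $\Zp \llbracket \Gal(\Fi/F) \rrbracket$-modules; moreover, since $\tilde{F}/F$ is abelian, $\Gal(\Fi/F)$ acts trivially on $\Gal(\tilde{F}/\Fi)$.

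Next I would identify $\Gal(\tilde{F}/\Fi)$ as an abstract $\Zp$-module. By definition every $\Zp$-extension of $F$ is contained in $\tilde{F}$, so $\Fi \subseteq \tilde{F}$ and $\Gal(\tilde{F}/\Fi) = \Ker\bigl(\Gal(\tilde{F}/F) \surj \Gal(\Fi/F)\bigr)$. By class field theory $\Gal(\tilde{F}/F)$ is a free $\Zp$-module of rank $\ge r_2(F)+1$ (with equality under Leopoldt's conjecture, cf. \cite[Theorem 11.1.2]{NSW} or \cite[Theorem 13.4]{Was}; only the lower bound is needed here). Since $\Gal(\Fi/F) \isom \Zp$ is torsion-free, the above exact sequence splits, so $\Gal(\tilde{F}/\Fi)$ is a direct summand of $\Gal(\tilde{F}/F)$, hence free over $\Zp$ of rank $\ge r_2(F)$. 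Fixing a $\Zp$-basis and projecting onto the first $r_2(F)$ coordinates yields a surjection $\Gal(\tilde{F}/\Fi) \surj \Zp^{\op r_2(F)}$, which is automatically $\Zp \llbracket \Gal(\Fi/F) \rrbracket$-linear because $\Gal(\Fi/F)$ acts trivially on both source and target. Composing with the surjection from the previous paragraph gives the desired $X(\Fi) \surj \Zp^{\op r_2(F)}$ of $\Zp \llbracket \Gal(\Fi/F) \rrbracket$-modules.

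I do not expect a genuine obstacle here: the only points requiring a little care are the unconditional rank bound $\rank_{\Zp} \Gal(\tilde{F}/F) \ge r_2(F)+1$ and the splitting of the sequence defining $\Gal(\tilde{F}/\Fi)$, both of which are standard facts from the class field theory of $\Zp$-extensions, together with the routine verification that restriction respects the $\Gal(\Fi/F)$-action.
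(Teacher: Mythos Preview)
Your proof is correct and follows essentially the same approach as the paper: both obtain the restriction surjection $X(\Fi)\surj\Gal(\tilde{F}/\Fi)$ from Lemma~\ref{tilde{F} in L(F_inf)}, observe that $\Gal(\Fi/F)$ acts trivially on $\Gal(\tilde{F}/\Fi)$ because $\tilde{F}/F$ is abelian (the paper cites Lemma~\ref{G/H act on H} for this), and then use $\rank_{\Zp}\Gal(\tilde{F}/\Fi)\ge r_2(F)$. Your argument is simply more explicit about why $\Gal(\tilde{F}/\Fi)$ is $\Zp$-free of the required rank, spelling out the splitting that the paper leaves implicit.
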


\begin{proof}
By Lemma~\ref{tilde{F} in L(F_inf)}, there exists a natural surjection
$X(\Fi) \surj \Gal(\tilde{F}/\Fi)$.
Moreover, by Lemma~\ref{G/H act on H}, the group $\Gal(\Fi/F)$ acts trivially on $\Gal(\tilde{F}/\Fi)$.
Since $\rank_{\Zp} \Gal(\tilde{F}/\Fi) \ge r_2(F)$, the assertion follows.
\end{proof}

The following lemma will be used in \S\ref{Proof in the case deg 6 |K| notequiv 0}.

\begin{lem}\label{Galois L/F}
Let $K/F$ be a Galois extension, possibly of infinite degree, 
and let $H$ be a normal subgroup of $\Gal(K/F)$.
Denote by $L$ the subfield in $L(K)/K$ such that $\Gal(L/K) \isom X(K)_H$.
Then $L/F$ is a Galois extension.
\end{lem}

\begin{proof}
Since $K/F$ is a Galois extension and $L(K)$ is the maximal abelian extension of $K$ unramified at all primes, the extension $L(K)/F$ is also Galois.
Therefore, it suffices to show that $\Gal(L(K)/L)$ is a normal subgroup of $\Gal(L(K)/F)$.
By definition, 
$$
\Gal(L(K)/L)=\left[ X(K), \Gal( L(K)/K^H ) \right].
$$ 
Here, for subgroups $H_1$, $H_2$ of a pro-$p$ group, the notation $[H_1, H_2]$ denotes the closed normal subgroup generated by elements of the form 
$[h_1,h_2]:=h_1 h_2 h_1^{-1} h_2^{-1}$ 
($h_i \in H_i$). 
Note that $\Gal(L(K)/K^H)$ is a normal subgroup of $\Gal(L(K)/F)$ since $K^H/F$ is Galois.
For any $x \in X(K)$, $\sigma \in \Gal( L(K)/K^H )$, and $\tau \in \Gal(L(K)/F)$, we have
$
\tau [x,\sigma] \tau^{-1}
=
[\tau x \tau^{-1}, \tau \sigma \tau^{-1}].
$
Since $\tau x \tau^{-1} \in X(K)$ and $\tau \sigma \tau^{-1} \in \Gal( L(K)/K^H )$, the proof is complete.
\end{proof}

%%%%%%%%%%%%%%%%%%%%%%%%%%%%%%%%%%%%%%%%%%%%%%%%%%%%%%%%%%

\subsection{Theorems in central class field theory}
\label{Theorems in central p-class field theory}

Let $p$ be an odd prime number, 
let $F$ be a finite algebraic number field, 
and let $K/F$ be a finite abelian $p$-extension.
Set $G:=\Gal(L(K)/F)$.
Then $G$ acts on $X(L(K))$ via inner automorphisms, since $X(L(K))$ is abelian.
We define the central $p$-class field $\mathcal{C}_{L(K)/F}$ associated with $L(K)/F$ as 
the subfield of $L(L(K))$ over $L(K)$ such that 
\begin{eqnarray}\label{central galois}
\Gal(\mathcal{C}_{L(K)/F}/L(K)) \isom 
X(L(K))_G.
\end{eqnarray}
%In other words, $\mathcal{C}_{L(K)/F}$ is the subfield of $L(L(K))$ fixed by the closed subgroup 
%$\left[ X(L(K)), \Gal(L(L(K))/F ) \right]$. 
%Here, for subgroups $H_1$, $H_2$ of a pro-$p$ group, 
%we denote by $[H_1, H_2]$ the closed normal subgroup generated by all commutators of the form $[h_1,h_2]:=h_1 h_2 h_1^{-1} h_2^{-1}$ 
%($h_i \in H_i$). 
Note that $X(L(K))=0$ if and only if 
$X(L(K))_G=0$,
by Nakayama's lemma.
For a prime ideal $\mathfrak{l}$ of $F$ that is ramified in $K/F$, we fix a prime ideal of $L(K)$ lying above $\mathfrak{l}$ 
and denote its decomposition group in $G$ by $D_\mathfrak{l}$. 
Then we have the following proposition from central class field theory (see Fr\"{o}hlich~\cite[Proposition 3.6, (3.24), and Theorem 3.11]{Frohlich}).

\begin{prop}\label{central}
With the above notation, we define Scholz's number knot associated with $G$ as follows:
$$
%\mathcal{K}(L(K)/F)
\mathcal{K}(G):=\Coker \left(
\prod_{\mathfrak{l}} H_2(D_{\mathfrak{l}},\Zp)
\to
%H_2\big(\Gal(L(K)/F),\Zp\big)
H_2(G,\Zp)
\right),
$$
where the map is induced by the canonical inclusion $D_{\mathfrak{l}} \to G$, 
and the product runs over all prime ideals of $F$ that are ramified in $K$.
Then we obtain the following exact sequence:
\begin{eqnarray}\label{central ex}
\hspace{-40pt}
&&
(E(F) \cap N_{L(K)/F}\mathbb{A}^\x_{L(K)})\ox_{\Z}\Zp
	\to
%\mathcal{K}(L(K)/F)
\mathcal{K}(G)
	\to
%\Gal \left( \mathcal{C}_{L(K)/F}/L(K) \right)
%X(L(K))_{\Gal(L(K)/F)}
X(L(K))_G
	\to
0,
\end{eqnarray}
where $\mathbb{A}^\x_{L(K)}$ denotes the id\`{e}le group of $L(K)$, 
and $N_{L(K)/F}$ denotes the norm map.
\end{prop}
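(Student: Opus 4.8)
The plan is to deduce the assertion from the central class field theory of \cite{Frohlich}, applied to the finite Galois extension $L(K)/F$. Since $K/F$ is a finite $p$-extension, $X(K)=A(K)$ is finite, so $G=\Gal(L(K)/F)$ is a finite $p$-group and $X(L(K))=A(L(K))$ is finite; consequently $H_2(G,\Zp)=H_2(G,\Z)$ and, for a prime $\mathfrak{l}$ of $F$ ramified in $L(K)/F$, $H_2(D_{\mathfrak{l}},\Zp)=H_2(D_{\mathfrak{l}},\Z)$, these being Schur multipliers of finite $p$-groups. By construction $\mathcal{C}_{L(K)/F}$ is the central class field of $L(K)/F$ in Fr\"{o}hlich's sense --- the maximal extension of $L(K)$ that is abelian and unramified over $L(K)$ and for which $\Gal(\mathcal{C}_{L(K)/F}/F)$ is a central extension of $G$ --- so by (\ref{central galois}) the object to be described is $\Gal(\mathcal{C}_{L(K)/F}/L(K))\isom X(L(K))_G$.

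First I would read off the local constraint governing such central extensions. A central extension of $G$ by a finite $p$-group is classified by a class in $H^2(G,-)$, and it is realized by an extension $M/F$ with $M/L(K)$ unramified only if, for every prime $\mathfrak{l}$ of $F$ ramified in $L(K)/F$, its restriction to the decomposition group $D_{\mathfrak{l}}\subseteq G$ is unobstructed: because $L(K)/K$ is unramified, the inertia group above such an $\mathfrak{l}$ injects into $G$, the local extension splits over inertia, and a local analysis (local class field theory) forces the obstruction to factor through $H_2(D_{\mathfrak{l}},\Zp)$. As $p$ is odd and $K/F$ is a $p$-extension there is no archimedean ramification, and since $L(K)/K$ is unramified the primes of $F$ ramified in $L(K)/F$ are exactly those ramified in $K/F$, which is the index set in the definition of $\mathcal{K}(G)$. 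Dualizing, this yields the surjection $\mathcal{K}(G)\surj X(L(K))_G$, i.e.\ exactness at $X(L(K))_G$; this is the content of \cite[Proposition 3.6 and (3.24)]{Frohlich}.

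Next I would identify the kernel. By the id\`{e}lic reciprocity computation of \cite[Theorem 3.11]{Frohlich}, the classes in $\mathcal{K}(G)$ that already split completely over $L(K)$ --- i.e.\ the kernel of $\mathcal{K}(G)\to X(L(K))_G$ --- are precisely those produced, through the norm residue pairing, by the global units of $F$ lying in $N_{L(K)/F}\mathbb{A}^{\x}_{L(K)}$, that is, by units that are everywhere local norms. This provides the map $(E(F)\cap N_{L(K)/F}\mathbb{A}^{\x}_{L(K)})\ox_{\Z}\Zp\to\mathcal{K}(G)$ and identifies its image with that kernel, so that (\ref{central ex}) is exact. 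Concretely one may take Fr\"{o}hlich's integral sequence and apply $-\ox_{\Z}\Zp$: as $\Zp$ is $\Z$-flat this preserves exactness, $H_2(G,\Z)$ and $H_2(D_{\mathfrak{l}},\Z)$ are unchanged since they are already finite $p$-groups, and the $\Gal$-term becomes $X(L(K))_G$ because $X(L(K))$ is the $p$-part of the unramified abelian class group and coinvariants commute with $-\ox_{\Z}\Zp$.

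The step I expect to be the main obstacle is the faithful translation of Fr\"{o}hlich's formulation --- stated for the full central class field of an arbitrary finite Galois extension, with the attendant bookkeeping at archimedean places and possibly a different normalization of the norm/unit term --- into the present $p$-primary setting, so that the set of ramified primes and the group $E(F)\cap N_{L(K)/F}\mathbb{A}^{\x}_{L(K)}$ match on the nose after passage to $\Zp$-coefficients.
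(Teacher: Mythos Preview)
Your proposal is correct and follows exactly the paper's approach: the paper does not give an independent argument but simply invokes Fr\"ohlich \cite[Proposition~3.6, (3.24), Theorem~3.11]{Frohlich}, and your sketch is a faithful unpacking of how those three results combine to give (\ref{central ex}) after tensoring with $\Zp$. The one point you leave implicit, which the paper isolates in the remark following the proposition, is that the abelianness of $K/F$ forces the genus field of $L(K)/F$ in Fr\"ohlich's sense to coincide with $L(K)$ itself; this is what makes the right-hand term of the sequence literally $X(L(K))_G$ rather than a further quotient.
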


\begin{rem}
{\rm 
Since we assume that $K/F$ is abelian, the genus field associated with $L(K)/F$, defined in \cite[\S2,~page 13]{Frohlich}, coincides with $L(K)$.
}
\end{rem}

If either $K$ or $F$ is an infinite extension, 
we obtain an exact sequence similar to (\ref{central ex})
by taking the projective limit over its finite subextensions.
On the other hand, suppose that $K=F$.
Then we have 
$
E(F) \cap N_{L(F)/F}\mathbb{A}^\x_{L(F)}=E(F),
$
and
$
H_2(X(F),\Zp) \isom A(F) \wedge_{\Zp} A(F).
$
Hence, if $L(L(F))= L(F)$,
i.e., $X(L(F))=0$, 
then the induced map $E(F) \to A(F)\wedge_{\Zp}A(F)$ is surjective.
Therefore, we obtain the following.

\begin{cor}\label{nec of finite}
If $X(L(F))=0$, then
$$
\dim_{\Fp}A(F)/p \le \frac{1+\sqrt{1+8(r_1(F)+r_2(F)+\nu-1)}}{2},
$$
where $\nu=1$ or $0$, according to whether a primitive $p$th root of unity is contained in $F$ or not.
\end{cor}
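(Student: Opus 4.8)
The plan is to apply the case $K=F$ of Proposition \ref{central} and then do a dimension count over $\Fp$. Since $L(F)/F$ is unramified, there are no primes of $F$ that ramify in $L(F)/F$, so the product defining $\mathcal{K}(G)$ is empty and $\mathcal{K}(G)=H_2(G,\Zp)$ with $G=\Gal(L(F)/F)=X(F)$. As recorded right after Proposition \ref{central}, in this case $E(F)\cap N_{L(F)/F}\mathbb{A}^\x_{L(F)}=E(F)$ and $H_2(X(F),\Zp)\isom A(F)\wedge_{\Zp}A(F)$, so the exact sequence (\ref{central ex}) becomes
$$
E(F)\ox_{\Z}\Zp \longto A(F)\wedge_{\Zp}A(F) \longto X(L(F))_G \longto 0 .
$$
Since $X(L(F))=0$ is equivalent to $X(L(F))_G=0$, the left-hand arrow is surjective, and this is the only input drawn from the hypothesis.

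Next I would bound the $\Fp$-dimensions of the two outer terms. Writing $d:=\dim_{\Fp}A(F)/p$, the exterior square commutes with base change, so $(A(F)\wedge_{\Zp}A(F))\ox_{\Zp}\Fp\isom\Lambda^2_{\Fp}(A(F)/p)$, which has dimension $\binom{d}{2}$. For the other term, Dirichlet's unit theorem gives $E(F)\isom\mu(F)\x\Z^{r_1(F)+r_2(F)-1}$; tensoring with $\Zp$ and reducing modulo $p$, the free part contributes $r_1(F)+r_2(F)-1$, and the torsion part $\mu(F)$ contributes one more exactly when $p\mid\#\mu(F)$, i.e. when a primitive $p$th root of unity lies in $F$. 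Hence $\dim_{\Fp}(E(F)\ox_{\Z}\Zp)/p=r_1(F)+r_2(F)+\nu-1$. Reducing the surjection above modulo $p$ preserves surjectivity, so
$$
\binom{d}{2}\le r_1(F)+r_2(F)+\nu-1 ,
$$
that is, $d^2-d-2(r_1(F)+r_2(F)+\nu-1)\le 0$; solving this quadratic inequality in $d$ gives the claimed bound.

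I do not expect a genuine obstacle here: the argument is a formal consequence of Proposition \ref{central}, Dirichlet's unit theorem, and the structure theory of finite abelian $p$-groups. The one point deserving a line of justification is the equality $\dim_{\Fp}(A(F)\wedge_{\Zp}A(F))/p=\binom{d}{2}$, which one checks on a decomposition $A(F)\isom\bigoplus_{i=1}^{d}\Z/p^{a_i}\Z$: then $A(F)\wedge_{\Zp}A(F)\isom\bigoplus_{i<j}\Z/p^{\min(a_i,a_j)}\Z$, whose minimal number of generators is exactly $\binom{d}{2}$.
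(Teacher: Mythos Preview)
Your proof is correct and follows essentially the same approach as the paper: specialize Proposition~\ref{central} to $K=F$, use the resulting surjection $E(F)\ox_\Z\Zp\to A(F)\wedge_{\Zp}A(F)$, and then count $\Fp$-dimensions via Dirichlet's unit theorem and the structure of finite abelian $p$-groups. The paper leaves the dimension count implicit, so your explicit computation of $\dim_{\Fp}(A(F)\wedge_{\Zp}A(F))/p=\binom{d}{2}$ and the quadratic inequality is a welcome elaboration rather than a departure.
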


If $\mathcal{K}(G)=0$, then $X(L(K))_G=0$, and hence $X(L(K))=0$ by Nakayama's lemma.
The module
$\mathcal{K}(G)$ can be computed as follows.
We consider the commutative diagram arising from the minimal presentations
$$
\begin{CD}
1 @>>> \mathcal{R} @>>> \mathcal{F} @>>> G @>>> 1\\
  & & @AAA @AAA @AAA \\
1 @>>> \mathcal{R}_{\mathfrak{l}} @>>> \mathcal{F}_{\mathfrak{l}} @>>> D_{\mathfrak{l}} @>>> 1\\
\end{CD}
$$
of $G$ and the decomposition groups $D_{\mathfrak{l}}$, presented by the free pro-$p$ groups $\mathcal{F}$ and $\mathcal{F}_{\mathfrak{l}}$, respectively.
Since the Hochschild--Serre spectral sequence yields the following isomorphisms,
\begin{eqnarray}\label{homology isom commutator}
H_2(G,\Zp) 
\isom 
\mathcal{R} \cap [F,F]/[R,F],
\ \ 
H_2(D_\mathfrak{l},\Zp) 
\isom 
\mathcal{R}_{\mathfrak{l}} \cap [\mathcal{F}_{\mathfrak{l}},\mathcal{F}_{\mathfrak{l}}]/[\mathcal{R}_{\mathfrak{l}},\mathcal{F}_{\mathfrak{l}}],
\end{eqnarray}
we conclude that $\mathcal{K}(G)=0$ if and only if
\begin{eqnarray}\label{judge}
\Phi\colon
\prod_{\mathfrak{l}}
{\mathcal{R}_{\mathfrak{l}} \cap [\mathcal{F}_{\mathfrak{l}},\mathcal{F}_{\mathfrak{l}}]}/
{[\mathcal{R}_{\mathfrak{l}},\mathcal{F}_{\mathfrak{l}}]}
\longto
{R \cap [F,F]}/{[R,F] }
\end{eqnarray}
is surjective.

%%%%%%%%%%%%%%%%%%%%%%%%%%%%%%%%%%%%%%%%%%%%%%%%%%%%%%%%%%

\subsection{$\Zp$-extensions with the action of the complex conjugation}
\label{section anticyclotomic}

Hereafter, we assume that $k$ is a CM-field.
% and that Leopoldt's conjecture for $k$ and $p$ holds.
Let $k^+$ denote the maximal totally real subfield of $k$.

\begin{lem}\label{cokernel D(k_n) to X(k_n)^{Gam}}
Let $p$ be an odd prime number, $k$ a CM-field that does not contain all primitive $p$th roots of unity, and $\ki$ a $\Zp$-extension of $k$.
Let $k_n$ denote the intermediate field of $\ki/k$ of degree $p^n$, 
and set $\Gam:=\Gal(\ki/k)$ and $\Gam_n:=\Gal(k_n/k)$.
Suppose that $\Gal(k/k^+)$ acts on $\Gam$ via inner automorphisms, 
and that every prime ideal of $k$ lying above $p$ is non-split and ramified in $\ki/k$.
Then the induced action of $\Gal(k/k^+)$ on the cokernel
$$
\Coker
\left(
\varprojlim_{\rm norm} D(k_n) \inj X(\ki)^\Gam
\right)
$$
is trivial.
\end{lem}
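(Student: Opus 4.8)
The plan is to convert the statement into a stability property of finite-level ambiguous ideal classes, and then to exploit the two CM-theoretic hypotheses — that $\ki/k^+$ is Galois, and that $k$ contains no $p$-th root of unity — to annihilate the unit contribution in the minus part. Write $J$ for the complex conjugation, so $\Gal(k/k^+)=\langle J\rangle$. Since $p$ is odd and each $\Gam_n=\Gal(k_n/k)$ is a cyclic $p$-group, one has (standard) $X(\ki)=\varprojlim_{\rm norm}A(k_n)$, hence $X(\ki)^{\Gam}=\varprojlim_{\rm norm}A(k_n)^{\Gam_n}$, the transition maps being the norms. The hypothesis that $\Gal(k/k^+)$ acts on $\Gam$ by inner automorphisms means precisely that $\ki/k^+$ is Galois, so $J$ acts on every $k_n$; the hypothesis that each prime of $k$ above $p$ is non-split in $\ki/k$ gives a single prime of $k_n$ above it, so $D(k_n)$ is a $J$-stable subgroup of $A(k_n)^{\Gam_n}$ and $\{D(k_n)\}$ a subsystem. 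As the $D(k_n)$ are finite, $\varprojlim^{1}$ vanishes and we obtain a $J$-equivariant isomorphism
\[
\Coker\Big(\varprojlim_{\rm norm}D(k_n)\inj X(\ki)^{\Gam}\Big)\ \isom\ \varprojlim_{\rm norm}\big(A(k_n)^{\Gam_n}/D(k_n)\big);
\]
it therefore suffices to show that $1-J$ annihilates the right-hand side.

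The heart of the proof is a level-by-level estimate. Chevalley's ambiguous class number formula for the cyclic extension $k_n/k$ gives an exact sequence $0\to B_n\to A(k_n)^{\Gam_n}\to G_n\to 0$, where $B_n$ is the image in $A(k_n)$ of the group of $\Gam_n$-invariant ideals and $G_n\isom (E_k\cap N_{k_n/k}k_n^\times)/N_{k_n/k}E_{k_n}$. Now $B_n\subseteq\langle j_n(A(k)),D(k_n)\rangle$: a $\Gam_n$-invariant ideal supported away from $p$ is extended from $k$, hence has class in $j_n(A(k))$, while each ramified prime lies above $p$ (a $\Zp$-extension is unramified outside $p$) and, being non-split, has a single prime above it in $k_n$, whose class lies in $D(k_n)$. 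On the other hand $G_n$ is a $J$-stable subquotient of $E_k\ox_{\Z}\Zp$, and here the hypotheses on $k$ enter: since $r_1(k)=0$ one has $\rank_{\Z}E_k=\rank_{\Z}E_{k^+}$, so the minus part of $E_k$ is finite, and since $k$ contains no $p$-th root of unity this minus part has no $p$-component; thus $(E_k\ox_{\Z}\Zp)^-=0$, and consequently $1-J$ annihilates $G_n$. Therefore $(1-J)A(k_n)^{\Gam_n}\subseteq B_n\subseteq\langle j_n(A(k)),D(k_n)\rangle$, and modulo $D(k_n)$ this reads
\[
(1-J)\big(A(k_n)^{\Gam_n}/D(k_n)\big)\ \subseteq\ j'_n(A'(k)),
\]
where $A'(k):=A(k)/D(k)$ and $j'_n\colon A'(k)\to A(k_n)/D(k_n)$ is induced by extension of ideals (well defined since $j_n(D(k))\subseteq D(k_n)$).

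It remains to pass to the inverse limit, and the point is that the norm maps contract the subgroups $j'_n(A'(k))$. For an ideal $\mathfrak a$ of $k$ prime to $p$ one has $N_{k_{n+1}/k_n}(\mathfrak a\,\mathcal O_{k_{n+1}})=(\mathfrak a\,\mathcal O_{k_n})^{p}$, so $N_{k_m/k_n}$ maps $j'_m(A'(k))$ onto $p^{\,m-n}\,j'_n(A'(k))$. Given a norm-coherent system $(c_n)_n$ in $\varprojlim_{\rm norm}\big(A(k_n)^{\Gam_n}/D(k_n)\big)$, the previous paragraph gives $(1-J)c_m\in j'_m(A'(k))$ for every $m$, hence $(1-J)c_n=N_{k_m/k_n}\big((1-J)c_m\big)\in p^{\,m-n}\,j'_n(A'(k))$ for all $m\ge n$. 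Since $A'(k)$ is finite, this forces $(1-J)c_n=0$ for every $n$, i.e. $1-J$ annihilates the cokernel, as desired.

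The step I expect to cost the most effort is the genus-theoretic bookkeeping in the second paragraph: one must check carefully that the contribution of the ramified primes to $A(k_n)^{\Gam_n}$ genuinely lands inside $D(k_n)$ — this is exactly where the non-split hypothesis above $p$ is used — and that the residual obstruction $G_n$ is, $J$-equivariantly, a subquotient of $E_k\ox_{\Z}\Zp$ rather than merely of $E_{k_n}\ox_{\Z}\Zp$, since this is what makes $(E_k\ox_{\Z}\Zp)^-=0$ applicable; the latter rests on the identification $H^2(\Gam_n,E_{k_n})\isom\widehat H^{0}(\Gam_n,E_{k_n})=E_k/N_{k_n/k}E_{k_n}$ for the cyclic group $\Gam_n$. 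Everything else is formal manipulation of inverse limits of finite groups.
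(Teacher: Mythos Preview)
Your proof is correct and follows essentially the same route as the paper's: both derive the ambiguous-class exact sequence
\[
0 \to i_n(A(k))\,D(k_n) \to A(k_n)^{\Gam_n} \to \big(E(k)\cap N_{\Gam_n}k_n^\times\big)/N_{\Gam_n}E(k_n) \to 0
\]
from the cohomology of the cyclic extension $k_n/k$, kill the minus part of the unit term via $[E(k):E(k^+)]$ being prime to $p$ (equivalently $(E(k)\otimes\Zp)^-=0$), and then pass to the inverse limit where the contribution of $i_n(A(k))$ vanishes because the norm acts on it as multiplication by $p$. Your presentation differs only cosmetically: you first reduce to $\varprojlim\big(A(k_n)^{\Gam_n}/D(k_n)\big)$ via $\varprojlim^1=0$ and then bound $(1-J)$ of each finite level inside $j'_n(A'(k))$, whereas the paper takes the limit of the exact sequence directly and invokes $\varprojlim i_n(A(k))=0$; the underlying mechanism is identical.
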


\begin{proof}
The proof is essentially the same as that of \cite[Lemma 4.1]{Okano2006}.
Note that $\Gam$ acts trivially on $D(k_n)$ because every prime ideal of $k$ lying above $p$ does not split in $\ki/k$.
Also note that $\ki/k^+$ is a Galois extension by assumption, 
and that $\Gal(k/k^+)$ acts on $X(\ki)^\Gam$ via inner automorphisms.
Let $E(k_n)$, $P(k_n)$, and $I(k_n)$ denote the unit group, the principal ideal group, and the group of fractional ideals of $k_n$, respectively.
Consider the following two exact sequences of $\Gam_n$-modules:
$$
0 \to E(k_n) \to k_n^\x \to P(k_n) \to 0,
\ \ 
0 \to P(k_n) \to I(k_n) \to I(k_n)/P(k_n) \to 0.
$$
Taking the cohomological long exact sequences of the above two short exact sequences,
we obtain the following exact sequence:
\begin{eqnarray}\label{i(A(k))D(k_n) to A(k_n)^{Gam} to unit}
0 \to i_n(A(k))D(k_n) \to A(k_n)^{\Gam} \to 
\dfrac{E(k) \cap N_{\Gam_n} k_n^\x}{N_{\Gam_n} E(k_n)} \to 0,
\end{eqnarray}
where $i_n \colon A(k) \to A(k_n)$ is the lifting map, 
and $N_{\Gam_n}$ is the norm operator.
Since we assume that $k$ does not contain all primitive $p$th roots of unity, the group index $[E(k):E(k^+)]$ is coprime to the odd prime number $p$
(see \cite[Theorem 4.12]{Was}).
Therefore, $\Gal(k/k^+)$ acts trivially on the right-hand term in (\ref{i(A(k))D(k_n) to A(k_n)^{Gam} to unit}).
Moreover, since the norm from $k_n$ to $k_{n-1}$ on each term in (\ref{i(A(k))D(k_n) to A(k_n)^{Gam} to unit}) maps every element to its $p$th power, 
it follows that the norm commutes with the action of $\Gal(k/k^+)$.
Note that $\varprojlim i_n(A(k))=0$.
Therefore, by taking the projective limit with respect to the norm maps of (\ref{i(A(k))D(k_n) to A(k_n)^{Gam} to unit}), we obtain an injective morphism
$
\varprojlim D(k_n) \inj X(\ki)^\Gam
$
of $\Gal(k/k^+)$-modules,
whose cokernel has a trivial $\Gal(k/k^+)$-action.
\end{proof}

\begin{cor}\label{cor cyclotomic case}
With the notation and assumptions of Lemma~{\rm \ref{cokernel D(k_n) to X(k_n)^{Gam}}}, 
let $\ki=\kcyc$ and $\Gam=\Gal(\kcyc/k)$.
Then, for the $p$-split Iwasawa module $X'(\kcyc)$ of $\kcyc$, 
we have
$$
X'(\kcyc)^- \isom 
\left( X(\kcyc)/X(\kcyc)^\Gam \right)^-.
$$
\end{cor}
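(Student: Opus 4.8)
The plan is to realize $X'(\kcyc)$ as the quotient of $X(\kcyc)$ by the closed subgroup generated by the decomposition groups of the primes above $p$, to identify that subgroup with $\varprojlim_{\rm norm}D(k_n)$, and then to invoke Lemma~\ref{cokernel D(k_n) to X(k_n)^{Gam}} for $\ki=\kcyc$ after passing to minus parts.

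First I would pin down the two exact sequences involved. By the hypothesis inherited from Lemma~\ref{cokernel D(k_n) to X(k_n)^{Gam}}, every prime of $k$ above $p$ is non-split and ramifies in $\kcyc/k$, so it is totally ramified there and has residue degree $1$ at every layer; consequently $X(\kcyc)\isom\varprojlim_{\rm norm}A(k_n)$ and $X(\kcyc)^\Gam\isom\varprojlim_n A(k_n)^{\Gam_n}$ (this is the identification already used to build the injection in Lemma~\ref{cokernel D(k_n) to X(k_n)^{Gam}}; see also \cite[\S 13]{Was}). Let $Y\subseteq X(\kcyc)$ be the closed subgroup generated by the decomposition groups of the primes of $\kcyc$ above $p$; then $X'(\kcyc)=X(\kcyc)/Y$ by Galois theory. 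Under the identification above, the image of $Y$ in each $A(k_n)$ is the subgroup generated by the Frobenius elements---equivalently the Artin symbols---of the primes of $k_n$ above $p$, which is exactly $D(k_n)$; since the norm maps carry $D(k_{n+1})$ onto $D(k_n)$, this gives $Y=\varprojlim_{\rm norm}D(k_n)$.

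Next I would apply Lemma~\ref{cokernel D(k_n) to X(k_n)^{Gam}} with $\ki=\kcyc$: its hypotheses are satisfied, since $\Gal(k/k^+)$ acts on $\Gam=\Gal(\kcyc/k)$ by inner automorphisms ($\kcyc/k^+$ being Galois) and the condition on the primes above $p$ is the one used above. Hence the inclusion $Y=\varprojlim_{\rm norm}D(k_n)\inj X(\kcyc)^\Gam$ has cokernel on which $\Gal(k/k^+)$ acts trivially, so, $p$ being odd, $\bigl(X(\kcyc)^\Gam/Y\bigr)^-=0$; that is, $Y^-=(X(\kcyc)^\Gam)^-$ as submodules of $X(\kcyc)^-$. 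Since $M\mapsto M^-$ is exact, applying it to $0\to Y\to X(\kcyc)\to X'(\kcyc)\to 0$ and to $0\to X(\kcyc)^\Gam\to X(\kcyc)\to X(\kcyc)/X(\kcyc)^\Gam\to 0$ gives
$$
X'(\kcyc)^-\isom X(\kcyc)^-/Y^-\isom X(\kcyc)^-/(X(\kcyc)^\Gam)^-\isom\bigl(X(\kcyc)/X(\kcyc)^\Gam\bigr)^-,
$$
which is the claim. The main obstacle is the identification in the second paragraph: proving $X(\kcyc)\isom\varprojlim A(k_n)$ and---above all---that the decomposition subgroup $Y$ corresponds exactly to $\varprojlim_{\rm norm}D(k_n)$, which requires tracking Frobenius elements through the layers and using the stability (equal to $1$) of the residue degrees of the primes above $p$; once this is in place, the rest is formal, using only the exactness of $(-)^-$.
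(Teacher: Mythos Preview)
Your proof is correct and follows essentially the same route as the paper: identify $X'(\kcyc)$ with $\varprojlim A(k_n)/\varprojlim D(k_n)$, apply Lemma~\ref{cokernel D(k_n) to X(k_n)^{Gam}} to get $(\varprojlim D(k_n))^-=(X(\kcyc)^\Gam)^-$, and conclude by exactness of $(-)^-$. The only difference is cosmetic---you phrase the first step via the decomposition subgroup $Y\subset X(\kcyc)$ and argue that $Y=\varprojlim D(k_n)$, whereas the paper simply invokes $X'(\kcyc)\isom\varprojlim(A(k_n)/D(k_n))$ directly.
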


\begin{proof}
We have $X'(\kcyc) \isom \varprojlim_{\rm norm}(A(k_n)/D(k_n))$.
Note that $\Gal(k/k^+)$ acts on $X(\kcyc)$, 
since $\Gal(\kcyc/k^+) \isom \Gam \x \Gal(k/k^+)$.
By Lemma~\ref{cokernel D(k_n) to X(k_n)^{Gam}}, we obtain 
$
\left(\varprojlim D(k_n)\right)^-
\isom
\left(X(\kcyc)^\Gam \right)^-.
$
Hence, we have 
$$
X'(\kcyc)^- 
\isom 
\left(\varprojlim A(k_n) \right)^-/\left(\varprojlim D(k_n) \right)^-
\isom 
X(\kcyc)^-/\left(X(\kcyc)^\Gam \right)^-.
$$
This completes the proof.
\end{proof}

For CM-fields $k$ in which $p$ splits completely,
Jaulent~\cite{Jaulent2024} used the same result as in Corollary~\ref{cor cyclotomic case} to provide a sufficient condition for $X(\tilde{k}) \neq 0$.
On the other hand, we will use the following corollary to prove that $X(\tilde{k}) \neq 0$.

\begin{cor}\label{cor anticyclotomic case}
Under the notation and assumptions of Lemma~{\rm \ref{cokernel D(k_n) to X(k_n)^{Gam}}}, 
suppose that the following conditions {\rm (i)} and {\rm (ii)} hold.
\begin{itemize}
\setlength{\parskip}{0pt} % 段落間
\setlength{\itemsep}{0pt} % 項目間
\item[{\rm (i)}]
The prime number $p$ splits completely in $k$.
\item[{\rm (ii)}]
Every prime ideal of $k$ lying above $p$ is non-split and ramified in $\ki/k$.
\end{itemize}
If $X(\tilde{k}) = 0$, 
then the action of $\Gal(k/k^+)$ on $X'(\ki)$ is induced by its action on $X(\ki)^\Gam$, and this action is trivial.
\end{cor}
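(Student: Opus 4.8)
The plan is to reduce the statement to Corollary \ref{cor cyclotomic case} together with the hypothesis $X(\tilde{k})=0$. First I would note that under assumption (i) the field $k$ splits completely at $p$, so by Lemma \ref{tilde{F} in L(F_inf)} applied to the $\Zp$-extension $\ki/k$ (which by assumption (ii) is ramified at every prime above $p$) we have $\tilde{k} \subseteq L(\ki)$; hence the natural surjection $X(\ki) \surj \Gal(\tilde{k}/\ki)$. The point of invoking $X(\tilde{k})=0$ is that it forces $L(\tilde{k})=\tilde{k}$, so $\tilde{k}$ is already the top of the unramified tower, and moreover (via Corollary \ref{nec of finite}-type considerations, or more directly from central class field theory in \S\ref{Theorems in central p-class field theory}) the abelian quotient is everything: the composite $X(\ki) \to \Gal(\tilde{k}/\ki)$ will be used to identify the $p$-split part.

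The key steps, in order, are: (1) observe that $X'(\ki) = \varprojlim_{\rm norm}(A(k_n)/D(k_n))$ by the usual class field theory description of the maximal abelian $p$-extension of $\ki$ in which all primes above $p$ split; (2) apply Lemma \ref{cokernel D(k_n) to X(k_n)^{Gam}} to get that $\varprojlim_{\rm norm} D(k_n) \inj X(\ki)^\Gam$ has cokernel with trivial $\Gal(k/k^+)$-action, exactly as in the proof of Corollary \ref{cor cyclotomic case}; (3) combine these to write $X'(\ki)$ as a quotient of $X(\ki)$ by a submodule containing $X(\ki)^\Gam$ up to a piece on which $\Gal(k/k^+)$ acts trivially, so that $\Gal(k/k^+)$ acts on $X'(\ki)$ and the action is inherited from its action on $X(\ki)^\Gam$; (4) finally, use $X(\tilde{k})=0$ to pin down that this induced action on $X(\ki)^\Gam$ — equivalently on the relevant quotient — is \emph{trivial}. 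For step (4), the mechanism is that $\Gal(k/k^+)$ acts on $X(\ki)^\Gam$ via inner automorphism by Lemma \ref{G/H act on H} (since $\ki/k^+$ is Galois by the hypothesis that $\Gal(k/k^+)$ acts on $\Gam$ via inner automorphism), and the full unramified abelian extension of $\tilde{k}$ being trivial means there is no room for a nontrivial commutator action in the relevant layer; one packages this by noting that $\Gal(\tilde{k}/\ki)$, being a quotient of $X(\ki)$ that is $\Gam$-trivial, receives the $\Gal(k/k^+)$-action, and $X(\tilde{k})=0$ together with Lemma \ref{G/H act on H} forces $\Gal(L(\ki)^+/k^+)$-type abelianness which trivializes the action on the $p$-split quotient.

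The main obstacle I expect is step (4): making precise why $X(\tilde{k})=0$ trivializes the $\Gal(k/k^+)$-action on $X'(\ki)$, rather than merely on the image of $\varprojlim D(k_n)$. The honest route is to run the argument of Lemma \ref{cokernel D(k_n) to X(k_n)^{Gam}} one level up: consider the exact sequence relating $X(\ki)^\Gam$, $\varprojlim D(k_n)$, and a unit-norm cokernel on which $\Gal(k/k^+)$ acts trivially (because $[E(k):E(k^+)]$ is prime to $p$), and then observe that $X'(\ki)$ sits in $X(\ki)/X(\ki)^\Gam$ after taking minus parts — but here we cannot simply pass to the minus part as in Corollary \ref{cor cyclotomic case} because $\ki$ need not be the cyclotomic $\Zp$-extension. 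Instead, the hypothesis $X(\tilde{k})=0$ is what replaces that: it guarantees (via $\tilde{k}\subseteq L(\ki)$ and $L(\tilde k)=\tilde k$) that the plus part of the relevant Iwasawa module contributes nothing obstructive, so the $\Gal(k/k^+)$-action descends to $X'(\ki)$ and coincides with the (trivial, by the cokernel computation) action there. I would spell this out by carefully chasing the tower $\ki \subseteq \tilde{k} \subseteq L(\ki)$ and applying Lemma \ref{G/H act on H} to $\Gal(L(\ki)/k^+) / X(\ki)$.
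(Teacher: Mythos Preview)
Your ingredients are right but you miss the one-line observation that makes the proof immediate, and your step (3) actually has the containment backwards.

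The paper's argument is this: since $\tilde{k}\subset L(\ki)$ (your step using Lemma \ref{tilde{F} in L(F_inf)} is fine) and $X(\tilde{k})=0$ gives $L(\ki)\subset L(\tilde{k})=\tilde{k}$, one has $L(\ki)=\tilde{k}$. Hence $X(\ki)=\Gal(\tilde{k}/\ki)$, and because $\tilde{k}/k$ is \emph{abelian}, $\Gam$ acts trivially on $X(\ki)$, i.e.\ $X(\ki)=X(\ki)^\Gam$. Therefore
\[
0 \to \varprojlim D(k_n) \to X(\ki)^\Gam \to X'(\ki) \to 0
\]
is exact, so $X'(\ki)$ is \emph{exactly} the cokernel appearing in Lemma \ref{cokernel D(k_n) to X(k_n)^{Gam}}, and that lemma says $\Gal(k/k^+)$ acts trivially on it. Done.

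Your step (3) claims $X'(\ki)$ is a quotient of $X(\ki)$ by a submodule \emph{containing} $X(\ki)^\Gam$; in fact the kernel is $\varprojlim D(k_n)$, which is \emph{contained in} $X(\ki)^\Gam$. Your obstacle paragraph then tries to mimic Corollary \ref{cor cyclotomic case} by placing $X'(\ki)$ inside $X(\ki)/X(\ki)^\Gam$ and worrying about plus/minus parts --- but for a non-cyclotomic $\ki$ there is no such decomposition available, and more to the point none is needed: the hypothesis $X(\tilde{k})=0$ collapses $X(\ki)/X(\ki)^\Gam$ to zero, so $X'(\ki)$ already lives entirely in $X(\ki)^\Gam$. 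That is the whole role of $X(\tilde{k})=0$ here, not the vaguer ``no room for a nontrivial commutator action'' mechanism you sketch in step (4).
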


\begin{proof}
Conditions (i) and (ii) imply that $\ki \subset \tilde{k} \subset L(\ki)$, by Lemma~\ref{tilde{F} in L(F_inf)}.
Moreover, since $X(\tilde{k}) = 0$, we obtain $\tilde{k} = L(\ki)$.
Therefore, $X(\ki)=X(\ki)^\Gam$,
and we have the following exact sequence:
$$
0 \to \varprojlim D(k_n) \to X(\ki)^\Gam \to X'(\ki) \to 0.
$$
It follows that the action of $\Gal(k/k^+)$ on $X'(\ki)$ is naturally induced.
By Lemma~\ref{cokernel D(k_n) to X(k_n)^{Gam}}, this action is trivial.
\end{proof}

%%%%%%%%%%%%%%%%%%%%%%%%%%%%%%%%%%%%%%%%%%%%%%%%%%%%%%%%%%

\subsection{Narrowing down the number of cases}
\label{Narrowing-down target}

In this subsection, we introduce the following lemma, which follows from well-known results.

\begin{lem}\label{target}
Let $p$ be an odd prime number, and let $k$ be a CM-field satisfying Leopoldt's conjecture for $k$ and $p$.
Suppose that $p$ splits completely in $k$.
If $X(\tilde{k})=0$, then the following conditions hold{\rm :}
\begin{itemize}
\setlength{\parskip}{0pt} % 段落間
\setlength{\itemsep}{0pt} % 項目間
\item[{\rm (i)}]
$[k:\Q] \le 6$.
\item[{\rm (ii)}]
$\dim_{\Fp}A(k)/pA(k) \le 2$.
%\item[{\rm (iii)}]
%$A(k)=D(k)$.
\item[{\rm (iii)}]
$X(\kcyc) \isom \Zp^{\op [k:\Q]/2}$ as a $\Zp\llbracket \Gal(\kcyc/k) \rrbracket$-module;
in other words, $L(\kcyc)=\tilde{k}$.
\end{itemize}
\end{lem}

\begin{proof}
As in the proof of Corollary \ref{cor anticyclotomic case}, we obtain (iii) 
by Lemma~\ref{tilde{F} in L(F_inf)} and the assumption $X(\tilde{k})=0$.
Condition (i) is the same as that in \cite[Lemma 3.9]{Fujii2022} or \cite[Th\'{e}or\`{e}me 10(i)]{Jaulent2024}.
Finally, we show that condition (ii) holds.
Since $X(\tilde{k})=0$,  it follows that $X(L(k))=0$.
Indeed, if this does not hold, then $L(L(k))$ is not contained in $\tilde{k}$, 
because it is not abelian over $k$.
Hence, $L(L(k))\tilde{k}/\tilde{k}$ is a nontrivial unramified $p$-extension.
This contradicts the assumption that $X(\tilde{k})=0$.
Thus, we obtain the inequality
$$
\dim_{\Fp}A(k)/pA(k) \le \frac{1+\sqrt{4[k:\Q]-7}}{2} <3
$$
by Corollary~\ref{nec of finite} and condition (i).
\end{proof}

It is known that, when $k$ is an imaginary quadratic field, $X(\tilde{k})=0$ if and only if $X(\kcyc) \isom \Zp$.
Indeed, if $X(\kcyc) \isom \Zp$, then we have $L(\kcyc)=\tilde{k}$ and 
$H_2(X(\kcyc),\Zp) \isom X(\kcyc) \wedge_{\Zp} X(\kcyc)=0$.
Therefore, $\mathcal{K}(X(\kcyc))$ in Proposition~\ref{central} is trivial,
which implies $X(\tilde{k})=0$ by Proposition~\ref{central} and Nakayama's lemma.
Conversely, if $X(\tilde{k})=0$, then $L(\kcyc)=\tilde{k}$,
which implies $X(\kcyc) \isom \Zp$.
Therefore, to prove Theorem~\ref{main thm}, 
it suffices to consider CM-fields $k$ of degree $4$ or $6$ that satisfy the following conditions:
\begin{itemize}
\setlength{\parskip}{0pt} % 段落間
\setlength{\itemsep}{0pt} % 項目間
\item[(I)]
Leopoldt's conjecture for $k$ and $p$ holds.
\item[(II)]
$p$ splits completely in $k$.
\item[(III)]
$\dim_{\Fp}A(k)/pA(k) \le 2$ and  
$X(\kcyc) \isom \Zp^{\op [k:\Q]/2}$
as a $\Zp\llbracket \Gal(\kcyc/k) \rrbracket$-module.
\end{itemize}
In particular, in this case, it is implicitly assumed that 
``Iwasawa's $\mu(\kcyc/k)=0$ conjecture'' (meaning that $X(\kcyc)$ is finitely generated as a $\Zp$-module) and that $X(\kcyc^+)=0$.
Indeed, condition (iii) implies that $\mu(\kcyc/k)=0$.

\begin{lem}\label{A(k)=D(k)}
Suppose that $k$ satisfies the above conditions {\rm (I)},  {\rm (II)}, and  {\rm (III)}.
Then 
$A(k)=D(k)$ and $X'(\kcyc)=0$.  
\end{lem}

\begin{proof}
Since we assume that Leopoldt's conjecture holds by (I), 
$\kcyc^+$ is the unique $\Zp$-extension of $k^+$.
Therefore, the complex conjugation acts on $\Gal(\tilde{k}/\kcyc)$, 
which is isomorphic to $X(\kcyc)$ by (III),
as multiplication by $-1$.
Therefore, $X(\kcyc^+) \isom X(\kcyc)^+=0$.
In particular, $X'(\kcyc)^+$ is also trivial.
Hence, by Corollary~\ref{cor cyclotomic case}, we have 
$
X'(\kcyc)
\isom
\left( X(\kcyc)/X(\kcyc)^{\Gal(\kcyc/k)} \right)^-
$.
Combining this with condition (III), we obtain the triviality of $X'(\kcyc)$.
This means that the maximal abelian $p$-extension of $\kcyc$ in which every prime lying above $p$ splits completely coincides with $\kcyc$.
Hence, we obtain $A(k)=D(k)$ by class field theory, since every prime ideal in $k$ above $p$ is totally ramified in $\kcyc/k$. 
\end{proof}

Hereinafter, we write the elements of $\Gal(\tilde{k}/k)$ multiplicatively.

%%%%%%%%%%%%%%%%%%%%%%%%%%%%%%%%%%%%%%%%%%%%%%%%%%%%%%%%%%
%%%%%%%%%%%%%%%%%%%%%%%%%%%%%%%%%%%%%%%%%%%%%%%%%%%%%%%%%%
%%%%%%%%%%%%%%%%%%%%%%%%%%%%%%%%%%%%%%%%%%%%%%%%%%%%%%%%%%

\section{The case of degree $4$}\label{section deg 4}

%%%%%%%%%%%%%%%%%%%%%%%%%%%%%%%%%%%%%%%%%%%%%%%%%%%%%%%%%%
%%%%%%%%%%%%%%%%%%%%%%%%%%%%%%%%%%%%%%%%%%%%%%%%%%%%%%%%%%

\subsection{The non-Galois or cyclic case}\label{case: non-Gal deg 4}
\subsubsection{Setting and preparation}

Let $k$ be either a non-Galois CM-field of degree $4$ over $\Q$ or a totally imaginary cyclic extension of degree $4$ over $\Q$.
Suppose that $k$ satisfies conditions (I), (II), and (III) given in \S\ref{Narrowing-down target}.
Let $F/\Q$ be the Galois closure of $k/\Q$, and denote its Galois group by $\Delta:=\Gal(F/\Q)$.
Then it is known that 
$$
\Delta \isom 
\begin{cases}
\Z/4\Z 
=\{ \sigma \,|\, \sigma^4=1 \}
&
\text{if $k$ is cyclic},
\\
D_8
:=\{ \sigma,\tau \,|\, \sigma^4=\tau^2=1,\ \tau \sigma \tau^{-1}=\sigma^{-1} \}
&
\text{if $k$ is non-Galois}.
\end{cases}
$$
We identify $\Del$ with the corresponding group accordingly.
The center of $\Del$ is $\langle \sigma^2 \rangle$ when $\Del=D_8$.
By \cite[Corollary~1.5]{MilneCM06}, the field $F/\Q$ is also a CM-field.
It follows that $F^+/\Q$ is Galois, 
and therefore $\sigma^2$ is the complex conjugation;
that is, 
$\Gal(F/F^+)=\langle \sigma^2 \rangle$.
Fix a prime ideal $\P$ of $F$ lying above $p$.
Since $p$ splits completely in $k/\Q$, 
it also splits completely in $F/\Q$.
Therefore, the prime decomposition of $p$ in $F$ is given by
$
(p)=\prod_{g \in \Delta}g(\P).
$
Define 
$\mathfrak{Q}:=\sigma(\P)$, $\overline{\P}:=\sigma^2(\P)$, and $\overline{\mathfrak{Q}}:=\sigma^3(\P)$.
Then $\overline{\P}$ and $\overline{\mathfrak{Q}}$ are the complex conjugates of $\P$ and $\mathfrak{Q}$, respectively.
Define 
$$
\p:=N_{F/k}\P, 
\ \ 
\q:=N_{F/k}\mathfrak{Q}, 
\ \ 
\overline{\p}:=N_{F/k} \overline{\P}, 
\ \ 
\overline{\q}:=N_{F/k} \overline{\mathfrak{Q}}.
$$

\begin{lem}\label{primes are distinct in the case deg 4}
The four prime ideals $\p$, $\q$, $\overline{\p}$, and $\overline{\q}$ are distinct.
\end{lem}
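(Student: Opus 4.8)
The plan is to trace through how the four primes $\p,\q,\overline{\p},\overline{\q}$ of $k$ are obtained by norming down the $\Delta$-orbit of $\P$ in $F$, and to show that the norm map is injective on the relevant cosets. First I would fix notation: the primes of $F$ above $p$ are $\{g(\P) : g \in \Delta\}$, which has cardinality $|\Delta| = 8$ (in the $D_8$ case) or $4$ (in the cyclic case), since $p$ splits completely in $F/\Q$. The decomposition group of $\P$ in $\Delta$ is trivial. For $k$ with $\Delta_k := \Gal(F/k)$, a prime $\mathfrak{l}$ of $k$ above $p$ corresponds to a $\Delta_k$-orbit on $\{g(\P)\}$, equivalently to a double coset $\Delta_k \backslash \Delta / \langle 1 \rangle = \Delta_k \backslash \Delta$, and $N_{F/k}$ of a prime above $\mathfrak{l}$ lands in that $\mathfrak{l}$. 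So the statement reduces to: the four right cosets $\Delta_k, \Delta_k\sigma, \Delta_k\sigma^2, \Delta_k\sigma^3$ are pairwise distinct.

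The key step is to identify $\Delta_k$ explicitly in each case and check these cosets are distinct. In the cyclic case, $\Delta = \Z/4\Z$ and $k = F$, so $\Delta_k = \{1\}$, and $1, \sigma, \sigma^2, \sigma^3$ are obviously four distinct elements, hence four distinct cosets; the four primes are genuinely distinct primes of $k$. In the non-Galois $D_8$ case, $k$ is a degree-$4$ non-normal subfield, so $\Delta_k$ is a non-normal subgroup of order $2$; up to relabeling it is one of the reflections not in the center, say $\Delta_k = \langle \tau \rangle$ (the key point being $\tau \notin \langle \sigma \rangle$ and $\sigma^2 \notin \Delta_k$). Then I would verify directly that $\langle\tau\rangle$, $\langle\tau\rangle\sigma$, $\langle\tau\rangle\sigma^2$, $\langle\tau\rangle\sigma^3$ are the four distinct right cosets of $\langle\tau\rangle$ in $D_8$: since $[\D_8 : \langle\tau\rangle] = 4$ and these four cosets are visibly distinct (e.g. $\langle\tau\rangle\sigma^i = \langle\tau\rangle\sigma^j$ would force $\sigma^{i-j} \in \langle\tau\rangle \cap \langle\sigma\rangle = \{1\}$), they exhaust $\Delta_k\backslash\Delta$. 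In particular $\p,\q,\overline\p,\overline\q$ correspond to four distinct primes of $k$. I should also note that the complex conjugation is $\sigma^2$, which is why the bar notation is consistent: $\overline\p = N_{F/k}(\sigma^2(\P))$ is indeed the complex conjugate of $\p = N_{F/k}(\P)$, since $\sigma^2$ is central and normalizes $\Delta_k$.

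A subtlety worth spelling out, and the one place I expect a little care is needed, is that distinct $\Delta_k$-orbits on the primes of $F$ could a priori still norm down to the same prime of $k$ if the norm map collapsed things — but this cannot happen: for a prime $\mathfrak{l}$ of $k$, the primes of $F$ above $\mathfrak{l}$ form exactly one $\Delta_k$-orbit, and $N_{F/k}$ sends any of them to a power of $\mathfrak{l}$, so the orbit is recovered from $\mathfrak{l}$. Hence $N_{F/k}(g(\P)) = N_{F/k}(g'(\P))$ forces $g(\P)$ and $g'(\P)$ to lie in the same $\Delta_k$-orbit, i.e.\ $\Delta_k g = \Delta_k g'$. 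So distinctness of the four cosets $\Delta_k, \Delta_k\sigma, \Delta_k\sigma^2, \Delta_k\sigma^3$ is equivalent to distinctness of the four primes, and the coset computation above finishes the proof in both the cyclic and the $D_8$ cases. The only genuine content is the elementary group theory of pinning down $\Delta_k$ and seeing that $\langle\sigma\rangle \cap \Delta_k = \{1\}$; everything else is formal class field theory / splitting behavior already set up in the text.
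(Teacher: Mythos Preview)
Your proof is correct and follows essentially the same approach as the paper: both reduce to showing that $\sigma^{i-j} \in \Gal(F/k)$ forces $i=j$ for $0 \le i,j \le 3$, which amounts to $\langle\sigma\rangle \cap \Gal(F/k) = \{1\}$. The only minor difference is in how $\sigma^2 \notin \Gal(F/k)$ is excluded in the $D_8$ case: you use that $k$ is non-Galois (so $\Gal(F/k)$ is non-normal, hence not the center $\langle\sigma^2\rangle$), whereas the paper uses that $k$ is CM (so $\sigma^2 \in \Gal(F/k)$ would force $k \subset F^+$ to be totally real); both arguments are valid here.
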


\begin{proof}
In the case where $\Delta=\Z/4\Z$, there is nothing to prove.
Therefore, we assume that $\Delta=D_8$.
We prove only that $\p$ differs from the other prime ideals.
If $\p=\overline{\p}$, then 
$N_{\Gal(F/k)}(\P)= N_{\Gal(F/k)} \circ \sigma^2(\P)$.
By the uniqueness of the prime ideal factorization of $N_{\Gal(F/k)}(\P)$, 
we have $\sigma^2 \in \Gal(F/k)$, which implies that $k$ is totally real.
This is a contradiction.
Assume that $\p$ coincides with another prime ideal other than $\overline{\p}$.
Then, similarly, either $\sigma$ or $\sigma^3$ is contained in $\Gal(F/k)$.
This implies that $\Gal(F/k)$ has an element of order $4$.
However, the order of $\Gal(F/k)$ is $2$.
This leads to a contradiction.
\end{proof}

We denote by $D_{\p}$, $D_\q$, $D_{\overline{\p}}$, and $D_{\overline{\q}}$ the decomposition groups of 
$\p$, $\q$, $\overline{\p}$, and $\overline{\q}$ in $\Gal(\tilde{k}/k)$, respectively.
Similarly, we denote by $I_{\p}$, $I_\q$, $I_{\overline{\p}}$, and $I_{\overline{\q}}$ the inertia groups of $\p$, $\q$, $\overline{\p}$, and $\overline{\q}$ in $\Gal(\tilde{k}/k)$, respectively.
Note that these inertia groups are isomorphic to $\Zp$, since $p$ splits completely in $k$.

\begin{lem}\label{complex embed in case non-Gal deg 4}
Under the above notation and assumptions, the restriction $\sigma |_k \in \Hom(k,F)$ of $\sigma$ to $k$ acts on $\Gal(\tilde{k}/k)$ and on $X(\kcyc)$ via inner automorphisms.
Moreover,
$$
\sigma|_k (D_\p)=D_\q,
\ \ 
\sigma|_k (D_\q)=D_{\overline{\p}},
\ \ 
\sigma|_k (D_{\overline{\p}})=D_{\overline{\q}}.
$$
\end{lem}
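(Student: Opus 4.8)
I would deduce the statement from two general facts: the functoriality of decomposition groups under Galois action, and the observation that the maximal multiple $\Zp$-extension of a field Galois over $\Q$ is again Galois over $\Q$ (any continuous $\Zp^d$-quotient of the absolute Galois group is stable under outer automorphisms). The cyclic and non-Galois cases are then handled separately.

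In the cyclic case $k=F$, so $\sigma$ is a genuine automorphism of $k$ of order $4$ with $\sigma^2$ the complex conjugation of $k$. Since $k/\Q$ is Galois, so are $\tilde k/\Q$ and $L(\tilde k)/\Q$, and $\sigma$ lifts to some $\tilde\sigma\in\Gal(L(\tilde k)/\Q)$. Conjugation by $\tilde\sigma$ preserves the normal subgroups $\Gal(\tilde k/k)$ and $X(\tilde k)=\Gal(L(\tilde k)/\tilde k)$, and since both are abelian the resulting automorphism does not depend on the lift $\tilde\sigma$; this conjugation inside $\Gal(L(\tilde k)/\Q)$ is the ``inner automorphism'' action of the statement. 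For the decomposition groups one uses $\tilde\sigma\,D_{\mathfrak a}\,\tilde\sigma^{-1}=D_{\sigma(\mathfrak a)}$ for every prime $\mathfrak a$ of $k$; since here $\p=\P$, $\mathfrak Q=\sigma(\p)=\q$, $\overline\P=\sigma^2(\p)=\overline\p$ and $\overline{\mathfrak Q}=\sigma^3(\p)=\overline\q$, the three asserted equalities $\sigma(D_\p)=D_\q$, $\sigma(D_\q)=D_{\overline\p}$, $\sigma(D_{\overline\p})=D_{\overline\q}$ follow at once.

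In the non-Galois case $[F:k]=2$ and $\sigma|_k$ identifies $k$ with the \emph{distinct} conjugate field $\sigma(k)$, so there is no automorphism of $\tilde k$ restricting to $\sigma|_k$ (as $\sigma(k)\not\subseteq\tilde k$). I would instead pass to the Galois closure: $\tilde F/\Q$ and $L(\tilde F)/\Q$ are Galois, $\tilde k\subseteq\tilde F$, and a lift $\tilde\sigma\in\Gal(L(\tilde F)/\Q)$ of $\sigma$ interchanges $\tilde k$ with $\widetilde{\sigma(k)}$ (since $\sigma$ interchanges $k$ and $\sigma(k)$, which are isomorphic fields both satisfying the hypotheses (I)(II)(III)) while fixing $k^+=\sigma(k)^+$ setwise. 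Using that $\sigma^2$ restricts to the complex conjugation of $k$, which does act on $\Gal(\tilde k/k)$ and $X(\tilde k)$ in the usual way, one transports the action along $\sigma|_k$ to obtain a well-defined action of $\sigma|_k$ on these modules realised by conjugation inside $\Gal(L(\tilde F)/\Q)$. For the decomposition groups one combines $\sigma\circ N_{F/k}=N_{F/\sigma(k)}\circ\sigma$ with $\sigma(\P)=\mathfrak Q$, $\sigma(\mathfrak Q)=\overline\P$, $\sigma(\overline\P)=\overline{\mathfrak Q}$ and the distinctness established in Lemma~\ref{primes are distinct in the case deg 4} to see that $\sigma|_k$ carries the prime attached to $\P$ to the one attached to $\mathfrak Q$, and so on along the chain, whence the three equalities again follow from functoriality.

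The delicate step is the non-Galois case, precisely because $\sigma(k)\neq k$ forces one to manufacture the action of $\sigma|_k$ from the symmetric picture over $k^+$ (equivalently, inside $\tilde F$) rather than as an honest automorphism of $\tilde k$; one must check that this transported action is well defined and that it still permutes the decomposition groups $D_{\mathfrak l}$ cyclically. The cyclic case, by contrast, is a formal consequence of $\tilde k/\Q$ being Galois. Throughout I would also use that $p$ is odd (so that any pro-$p$ group acts trivially on $\Gal(k/k^+)$) and that each inertia group $I_\p,\dots$ is $\Zp$-free of rank one because $p$ splits completely in $k$.
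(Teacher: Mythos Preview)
Your cyclic case is fine and is essentially what the paper does when $k=F$. The problem is the non-Galois case, where you have a genuine gap.

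You correctly observe that $\sigma(k)\neq k$, so conjugation by a lift $\tilde\sigma\in\Gal(L(\tilde F)/\Q)$ only produces an isomorphism $\Gal(\tilde k/k)\to\Gal(\widetilde{\sigma(k)}/\sigma(k))$, not an automorphism of $\Gal(\tilde k/k)$. Your proposed remedy---``transport the action along $\sigma|_k$'' using that $\sigma^2$ is complex conjugation---does not actually close this loop: applying $\tilde\sigma$ again just gives the $\sigma^2$-action (complex conjugation) on $\Gal(\tilde k/k)$, not a square root of it, and there is no canonical identification of $\Gal(\widetilde{\sigma(k)}/\sigma(k))$ with $\Gal(\tilde k/k)$ to compose with. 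The fields $\tilde k$ and $\widetilde{\sigma(k)}$ are genuinely different subfields of $\tilde F$ (indeed $\Gal(\tilde F/F)$ has $\Zp$-rank $5$ while $\Gal(\tilde k/k)$ has rank $3$, and $\sigma$ does not preserve the kernel of the restriction map $\Gal(\tilde F/F)\twoheadrightarrow\Gal(\tilde k/k)$).

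The paper's construction is different and bypasses this obstruction entirely. Because $p$ splits completely in $F$, the restriction maps $\pi_\P\colon D_\P\to D_\p$ from decomposition groups in $\Gal(\tilde F/F)$ to those in $\Gal(\tilde k/k)$ are \emph{isomorphisms} (all local completions are $\Q_p$). The paper then defines $\sigma|_k$ on each $D_\p$ individually by $\sigma|_k:=\pi_{\mathfrak Q}\circ\sigma\circ\pi_\P^{-1}$, checks that these agree on pairwise intersections, and extends to all of $\Gal(\tilde k/k)$ using the standing hypothesis $A(k)=D(k)$, which forces $\Gal(\tilde k/k)=\langle D_\p,D_\q,D_{\overline\p},D_{\overline\q}\rangle$. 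You never invoke $A(k)=D(k)$, and without it (or something like it) there is no reason the locally defined maps should glue to a global automorphism. This local-to-global construction via the decomposition groups is the missing idea.
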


\begin{proof}
Note that $\Gal(\tilde{F}/F)$ is endowed with an action of $\sigma$.
Let $D_\P$, $D_{\mathfrak{Q}}$, $D_{\overline{\P}}$, and $D_{\overline{\mathfrak{Q}}}$ denote the decomposition groups in $\Gal(\tilde{F}/F)$ of $\P$, $\mathfrak{Q}$, $\overline{\P}$, and $\overline{\mathfrak{Q}}$, respectively.
Then, by the definition of these prime ideals, we have 
$\sigma(D_\P)=D_{\mathfrak{Q}}$, $\sigma(D_{\mathfrak{Q}})=D_{\overline{\P}}$, and $\sigma(D_{\overline{\P}})=D_{\overline{\mathfrak{Q}}}$. 
Since $p$ splits completely in $F/\Q$, the canonical projection $\pi_\P \colon D_\P \to D_\p$ is an isomorphism.
From this, it follows that 
$$
\sigma|_k (D_\p)=D_\q
$$ 
is induced.
Indeed, using another isomorphism 
$\pi_{\mathfrak{Q}} \colon D_{\mathfrak{Q}} \to D_\q$, 
the map $\sigma|_k \colon D_\p \to D_\q$ can be written as
$\sigma|_k=\pi_{\mathfrak{Q}} \circ \sigma \circ \pi_{\mathfrak{P}}^{-1}$.
Similarly, $\sigma|_k (D_\q)=D_{\overline{\p}}$ and $\sigma|_k (D_{\overline{\p}})=D_{\overline{\q}}$ are induced.
Note that any two of these maps agree on the intersection of their domains of definition.
Since 
\begin{eqnarray}\label{Gal(tilde{k}/k) is generated by these four decomposition groups}
\Gal(\tilde{k}/k)=\langle D_\p, D_\q,D_{\overline{\p}}, D_{\overline{\q}} \rangle
\end{eqnarray}
by the condition $A(k)=D(k)$ in Lemma\ref{A(k)=D(k)},
$\sigma$ induces an action on $\Gal(\tilde{k}/k)$.
Via the canonical isomorphism $\Gal(F_{\rm cyc}/F) \isom \Gal(\kcyc/k)$, 
we observe that $\sigma|_k$ acts on $\Gal(\kcyc/k)$
and that the action is trivial.
Hence, $\sigma|_k$ also acts on $X(\kcyc)=\Ker(\Gal(\tilde{k}/k) \to \Gal(\kcyc/k))$.
This completes the proof.
\end{proof}

We now show that the decomposition and inertia groups can be described using certain parameters, as stated in the following lemma.
This result serves as a key step in the proof of our main theorem.

\begin{lem}\label{Z in case non-Gal deg 4}
Let $\gam \in \Gal(\tilde{k}/k)$ be a topological generator of $I_{\p}$.
Then there exist $x, y \in X(\kcyc)$ and $a,b \in \Zp$ such that 
$$
\begin{cases}
\Gal(\tilde{k}/k)=\langle \gam, x,y \rangle,
&
X(\kcyc)=\langle x,y \rangle,
\\
D_\p=\langle \gam \rangle \x \langle x \rangle,
&
D_\q=\langle \gam x^ay^b \rangle \x \langle y \rangle,
\\
D_{\overline \p}=\langle \gam x^{a-b}y^{a+b} \rangle \x \langle x^{-1} \rangle,
&
D_{\overline \q}=\langle \gam x^{-b}y^a\rangle \x \langle y^{-1} \rangle.
\end{cases}
$$
Here, the left factor of each direct product is the inertia group corresponding to the respective prime ideal.
\end{lem}

\begin{proof}
Let $\sigma|_k$ be as in Lemma \ref{complex embed in case non-Gal deg 4}.
By Lemma \ref{A(k)=D(k)}, every prime ideal lying above $p$ is inert in the extension $\tilde{k}/\kcyc$,
and $\Gal(\tilde{k}/k)$ is written as (\ref{Gal(tilde{k}/k) is generated by these four decomposition groups}).
Choose $x \in X(\kcyc)$ such that $D_{\p}=\langle \gam \rangle \x \langle x \rangle$, 
and set $y:=\sigma|_k(x)$.
Note that $y \in X(\kcyc)$ by Lemma \ref{complex embed in case non-Gal deg 4}, 
and that 
$(\sigma|_k)^2$ is the complex conjugation, which acts on
$X(\kcyc)=X(\kcyc)^-$ as inverse.
Applying $\sigma|_k$ repeatedly to each of $D_\p$ and $I_\p$, we obtain
\begin{eqnarray}\label{decomp group in proof in the case non-Gal deg 4}
D_\q=\langle \sigma|_k(\gam) \rangle \x \langle y \rangle,
\ \ 
D_{\overline \p}=\langle (\sigma|_k)^2(\gam) \rangle \x \langle x^{-1} \rangle,
\ \ 
D_{\overline \q}=\langle (\sigma|_k)^3(\gam) \rangle \x \langle y^{-1} \rangle
\end{eqnarray}
again by Lemma \ref{complex embed in case non-Gal deg 4}.
Here, the left factor of each direct product is the inertia group corresponding to the respective prime ideal.
Since $X'(\kcyc)=0$ by Lemma \ref{A(k)=D(k)} again, we have
$$
X(\kcyc)=
\left\langle
D_{\p} \cap X(\kcyc),\; D_{\q} \cap X(\kcyc),\; D_{\overline \p} \cap X(\kcyc),\; D_{\overline \q} \cap X(\kcyc)
\right\rangle
=
\langle
x,y
\rangle.
$$
Since $\sigma|_k$ acts trivially on $\Gal(\kcyc/k)$,
it follows that
$\sigma|_k(\gam) \equiv \gam$ modulo $X(\kcyc)$.
Therefore, there exist $a,b \in \Zp$ such that 
$$
\sigma|_k(\gam)=\gam x^a y^b.
$$
Combining this with (\ref{decomp group in proof in the case non-Gal deg 4}), we obtain the desired expressions for the decomposition groups.
Finally, 
by (\ref{Gal(tilde{k}/k) is generated by these four decomposition groups}),
%since $A(k) = D(k)$, the group $\Gal(\tilde{k}/k)$ is generated by these four decomposition groups. 
%This yields 
we have $\Gal(\tilde{k}/k)=\langle \gam, x,y \rangle$.
\end{proof}

\begin{rem}\label{a neq 0, b neq 0, a+b neq 0}
{\rm
(i) 
In the remainder of \S \ref{case: non-Gal deg 4}, 
we will not use the Galois action of $\Gal(k/\Q)$, except for the complex conjugation $J \in \Gal(k/k^+)$.
\\
(ii)
We have
$$
a \neq 0,\ b \neq 0,\ a+b \neq 0.
$$
Indeed, suppose that $a=0$.
Then $D_\q=\langle \gam, y \rangle \supset \langle I_\p, I_\q \rangle$.
This implies that the fixed field of $D_\q$ is a $\Zp$-extension of $k$ that is unramified outside $\{ \overline{\p}, \overline{\q}\}$ and is infinitely decomposed at $\q$.
This contradicts \cite[Lemma 3]{Fujii2015}.
Similarly, we have $b \neq 0$.
The last inequality, $a+b \neq 0$, also follows in the same way:
if $a+b=0$, then the fixed field of $D_\p=D_{\overline{\p}}=\langle \gam, x \rangle$ is a $\Zp$-extension of $k$, unramified outside $\{ \q, \overline{\q} \}$ and is infinitely decomposed at $\p$ and $\overline{\p}$.
This again contradicts the same lemma in \cite{Fujii2015}.
}
\end{rem}

\begin{cor}\label{A(k) in case non-Gal deg 4}
Depending on the number of generators of $A(k)$, the following conditions on $a$ and $b$ hold.
\begin{itemize}
\setlength{\parskip}{0pt} % 段落間
\setlength{\itemsep}{0pt} % 項目間
\item[{\rm (i)}]
$A(k)=0$ $\iff$ $a^2+b^2 \not\equiv 0 \pmod p$.
\item[{\rm (ii)}]
$A(k)$ is cyclic $\iff$ $a \not\equiv 0$, $b \not\equiv 0$, $a^2+b^2 \equiv 0 \pmod p$.
\item[{\rm (iii)}]
$\dim_{\Fp}A(k)/pA(k)=2$ $\iff$ $a \equiv b \equiv 0 \pmod p$.
\end{itemize}
\end{cor}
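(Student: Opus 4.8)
The plan is to identify $A(k)$ with the cokernel of an explicit $2\times 2$ matrix over $\Zp$ attached to $a$ and $b$, and then to read off the three cases by reducing modulo $p$.

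First I would show that $L(k)$ coincides with the maximal subfield $\tilde{H}$ of $\tilde{k}$ that is unramified over $k$, so that $A(k)\isom\Gal(\tilde{H}/k)$. Indeed, $L(k)/k$ is an unramified abelian $p$-extension, hence so is $L(k)\kcyc/\kcyc$, so $L(k)\kcyc\subseteq L(\kcyc)=\tilde{k}$, the last equality being condition (iv) of Proposition \ref{target}; thus $L(k)\subseteq\tilde{k}$, and being unramified over $k$ it lies in $\tilde{H}$, while $\tilde{H}\subseteq L(k)$ is automatic. Since $\tilde{k}/k$ is unramified outside $p$ and $\p,\q,\overline{\p},\overline{\q}$ are exactly the primes of $k$ above $p$ (distinct, by Lemma \ref{primes are distinct in the case deg 4}), the maximal unramified subextension is $\tilde{H}\leftrightarrow\langle I_\p,I_\q,I_{\overline{\p}},I_{\overline{\q}}\rangle$.

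Next I would compute this subgroup via the parametrization of Lemma \ref{Z in case non-Gal deg 4}. As $I_\p=\langle\gam\rangle$, $I_\q=\langle\gam x^ay^b\rangle$, $I_{\overline{\p}}=\langle\gam x^{a-b}y^{a+b}\rangle$, $I_{\overline{\q}}=\langle\gam x^{-b}y^a\rangle$, and since $\gam$ already lies in the group and $x^{a-b}y^{a+b}=(x^ay^b)(x^{-b}y^a)$, we get $\langle I_\p,I_\q,I_{\overline{\p}},I_{\overline{\q}}\rangle=\langle\gam,x^ay^b,x^{-b}y^a\rangle$. Because $\Gal(\tilde{k}/k)=\langle\gam\rangle\x\langle x\rangle\x\langle y\rangle$ is $\Zp$-free of rank $3$ and $X(\kcyc)=\langle x,y\rangle$ is $\Zp$-free of rank $2$, it follows that $A(k)\isom\Gal(\tilde{H}/k)\isom\langle x,y\rangle/\langle x^ay^b,x^{-b}y^a\rangle\isom\Coker N$, where, identifying $\langle x,y\rangle$ with $\Zp^{\op 2}$ through $x,y$, the matrix $N=\left(\begin{smallmatrix}a&-b\\b&a\end{smallmatrix}\right)$ has columns corresponding to $x^ay^b$ and $x^{-b}y^a$; note that $\det N=a^2+b^2$ and that the entries of $N$ are $a,-b,b,a$.

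Finally, reducing modulo $p$ gives $A(k)/pA(k)\isom\Coker\overline{N}$ with $\overline{N}\colon\Fp^{\op 2}\to\Fp^{\op 2}$, so $\dim_{\Fp}A(k)/pA(k)=2-\rank_{\Fp}\overline{N}$. By Nakayama's lemma $A(k)=0$ if and only if $\rank\overline{N}=2$, i.e.\ $\det\overline{N}=\overline{a^2+b^2}\ne0$ in $\Fp$, which is (i); and $\dim_{\Fp}A(k)/pA(k)=2$ if and only if $\overline{N}=0$, i.e.\ $a\equiv b\equiv0\pmod p$, which is (iii). The remaining case $\rank\overline{N}=1$, that is, $A(k)$ nontrivial cyclic, occurs exactly when $\det\overline{N}=0$ while $\overline{N}\ne0$; since $a^2+b^2\equiv0$ together with $a\equiv0$ or $b\equiv0$ would force $a\equiv b\equiv0$, this amounts to $a\not\equiv0$, $b\not\equiv0$, $a^2+b^2\equiv0\pmod p$, which is (ii). These three alternatives on $(a,b)$ are mutually exclusive and, as $\dim_{\Fp}A(k)/pA(k)\le2$ by hypothesis, exhaustive. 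I expect no essential difficulty here; the only points requiring care are the clean identification $L(k)=\tilde{H}$ in the second step and correctly reading the matrix $N$ and its determinant off Lemma \ref{Z in case non-Gal deg 4}.
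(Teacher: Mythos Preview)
Your proof is correct and follows essentially the same approach as the paper: both identify $A(k)$ with the quotient $\langle \gam,x,y\rangle/\langle \gam,\,x^ay^b,\,x^{-b}y^a\rangle$ via Lemma \ref{Z in case non-Gal deg 4} and analyze the rank of the resulting presentation matrix modulo $p$. The only differences are cosmetic --- you justify the inclusion $L(k)\subseteq\tilde{k}$ explicitly (via $L(\kcyc)=\tilde{k}$) where the paper leaves it implicit, and you work with the $2\times 2$ block $\left(\begin{smallmatrix}a&-b\\b&a\end{smallmatrix}\right)$ rather than the paper's $3\times 3$ matrix carrying an inert row and column for $\gam$.
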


\begin{proof}
By Lemma \ref{Z in case non-Gal deg 4}, we have
$
A(k) \isom 
\langle \gam, x, y \rangle/\langle I_\p, I_\q, I_{\overline{\p}},I_{\overline{\q}} \rangle
=
\langle \gam, x, y \rangle/\langle \gam, x^ay^b,x^{-b}y^a \rangle.
$
Hence, we have a presentation of $A(k)$ with the following representation matrix:
$$
A:=
\begin{bmatrix}
1 & 0 & 0
\\
0 & a & b
\\
0 & -b & a
\end{bmatrix}.
$$
In other words, there exists a linear map $\langle \gam, x, y \rangle \to \langle \gam, x, y \rangle$ whose cokernel is $A(k)$ and whose representation matrix is $A$.
Therefore, it suffices to consider the rank of $\overline{A}:=A \bmod p$:
$$
\left\{
\begin{array}{ccc}%{ccccl}
A(k)=0 & \iff & \rank\; \overline{A} =3 
\\
\text{$A(k)$ is cyclic} & \iff & \rank\; \overline{A} =2 
\\
\dim_{\Fp}A(k)/pA(k)=2 & \iff & \rank\; \overline{A} =1.
\end{array}
\right.
$$
The corollary follows directly from this.
\end{proof}

We define $G:=\Gal(L(\kcyc)/k)$ and compute $\mathcal{K}(G)$.
Note that the equivalence between (i) and (ii) in the following lemma is used in \S \ref{proof in case non-Gal deg 4}, 
but not in \S \ref{another proof in case non-Gal deg 4}.

\begin{lem}\label{mathcal{K} in case non-Gal deg 4}
Set $S:=\{ \p, \overline{\p} \}$ and let $M_p^S(k)$ be the maximal abelian $p$-extension of $k$ that is unramified outside $p$ and completely decomposes at every prime ideal in $S$.
Also set $G:=\Gal(L(\kcyc)/k)$ and let $\mathcal{K}(G)$ be the module defined in Proposition {\rm \ref{central}}.
Then the following statements are equivalent:
%\begin{itemize}
%\setlength{\parskip}{0pt} % 段落間
%\setlength{\itemsep}{0pt} % 項目間
%\item[{\rm (i)}]
\\
{\rm (i)} $M_p^S(k)=k$.
\ \ 
%\item[{\rm (ii)}]
{\rm (ii)} $\mathcal{K}(G)=0$, and therefore $X(\tilde{k})=0$.
\ \ 
%\item[{\rm (iii)}]
{\rm (iii)} $a+b \not\equiv 0 \pmod p$.
%\end{itemize}
\end{lem}

\begin{proof}
Since $\langle D_\p, D_{\overline{\p}}\rangle=\langle \gam,x,y^{a+b} \rangle$ by Lemma \ref{Z in case non-Gal deg 4}, 
conditions (i) and (iii) are equivalent.
Let $F$ be a free pro-$p$ group generated by $\gam$, $x$, and $y$, 
and let $R$ be the closed normal subgroup of $F$ generated by $[\gam,x]$, $[\gam,y]$, $[x,y]$, and their conjugates.
Then, by Lemma \ref{Z in case non-Gal deg 4}, we have a minimal presentation
$
1 \to R \to F \to \Gal(L(\kcyc)/k) \to 1
$
of $\Gal(L(\kcyc)/k)$, 
and 
$$
H_2(\Gal(L(\kcyc)/k),\Zp) \isom R \cap [F,F]/[R,F]=\langle [\gam,x], [\gam,y], [x,y] \rangle [R,F]/[R,F]
$$
by (\ref{homology isom commutator}).
Similarly, we see that each image of $H_2(D_\p,\Zp)$, $H_2(D_\q,\Zp)$, $H_2(D_{\overline{\p}},\Zp)$, and $H_2(D_{\overline{\q}},\Zp)$ under the map (\ref{judge}) is generated by
$$
[\gam,x],
\ \ 
[\gam x^ay^b, y],
\ \ 
[\gam x^{a-b}y^{a+b},x^{-1}],
\ \ 
[\gam x^{-b}y^a, y^{-1}],
$$
respectively.
Modulo $[R, F]$, 
we have
\begin{eqnarray*}
&&
[\gam x^ay^b, y] \equiv [\gam,y][x^a,y][y^b,y] \equiv [\gam,y][x,y]^a,
\\ 
&&
[\gam x^{a-b}y^{a+b},x^{-1}] \equiv [\gam,x]^{-1}[x,y]^{a+b},
\\
&&[\gam x^{-b}y^a, y^{-1}] \equiv [\gam,y]^{-1}[x,y]^b.
\end{eqnarray*}
Fixing the basis $\{ [\gam,x], [\gam,y], [x,y]\}$ of the $\Zp$-module $R \cap [F,F]/[R,F]$, 
we obtain a presentation of 
$\mathcal{K}(G)$ with the following representation matrix: 
$$
K:=
\begin{bmatrix}
1 & 0 & 0
\\
0 & 1 & a
\\
-1 & 0 & a+b
\\
0 & -1 & b
\end{bmatrix}.
$$
Hence, $\mathcal{K}(G)=0$ if and only if 
the matrix $K \bmod p$ has rank $3$, which is equivalent to $a + b \not\equiv 0 \pmod p$.
\end{proof}

\subsubsection{Proof of the non-Galois or cyclic case}
\label{proof in case non-Gal deg 4}

To prove the main theorem in this case, it suffices to show the following proposition.

\begin{prop}\label{main thm in case non-Gal deg 4}
Let $k$ be either a non-Galois CM-field or a totally imaginary cyclic extension of degree $4$.
Suppose that $k$ satisfies conditions {\rm (I)}, {\rm (II)}, and {\rm (III)} given in {\rm \S\ref{Narrowing-down target}}.
Then $X(\tilde{k})$ is trivial if and only if $a+b \not\equiv 0 \pmod p$.
\end{prop}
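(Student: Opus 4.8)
The plan is to reduce the statement $X(\tilde k)=0 \iff a+b\not\equiv 0\pmod p$ to the vanishing of the central-class-field module $\mathcal K(G)$ for $G=\Gal(L(\kcyc)/k)$, and then invoke Lemma \ref{mathcal{K} in case non-Gal deg 4}. The ``if'' direction is the main content. So suppose $a+b\not\equiv 0\pmod p$. By condition (III) we already know $L(\kcyc)=\tilde k$, hence $X(\tilde k)=X(L(\kcyc))$, and by the discussion after Proposition \ref{central} (Nakayama's lemma) it suffices to show $X(L(\kcyc))_G=0$, which by Proposition \ref{central} and the analysis in \S\ref{Theorems in central p-class field theory} follows once $\mathcal K(G)=0$. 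But Lemma \ref{mathcal{K} in case non-Gal deg 4} tells us exactly that $\mathcal K(G)=0$ under the hypothesis $a+b\not\equiv 0\pmod p$. One subtlety to address carefully: the central-class-field exact sequence (\ref{central ex}) was stated for $L(K)/F$ with $K/F$ a finite abelian $p$-extension, whereas here $\kcyc/k$ is infinite; I would pass to the projective limit over the finite layers $k_n$, using that $X(L(\kcyc))=\varprojlim X(L(k_n))$ and that $\tilde k/\kcyc$ is unramified (Lemma \ref{tilde{F} in L(F_inf)}), so that the only ramified primes contributing to $\mathcal K(G)$ are those above $p$, whose decomposition groups $D_\p,D_\q,D_{\overline\p},D_{\overline\q}$ in $G=\Gal(L(\kcyc)/k)$ are precisely those computed in Lemma \ref{Z in case non-Gal deg 4}. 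I should also note that $E(k)\to A(k)\wedge_{\Zp}A(k)$ and more generally the unit contribution in (\ref{central ex}) does not obstruct us here, because we are proving $\mathcal K(G)=0$ outright, which forces the cokernel $X(L(\kcyc))_G$ to vanish regardless of the left-hand map.

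For the ``only if'' direction, suppose $X(\tilde k)=0$; I must show $a+b\not\equiv 0\pmod p$. Here I argue contrapositively via Remark \ref{a neq 0, b neq 0, a+b neq 0}(ii) and a more refined version of that argument: if $a+b\equiv 0\pmod p$ (but, by the remark itself, $a+b\neq 0$ as an element of $\Zp$), then examining the presentation matrix $A$ of Corollary \ref{A(k) in case non-Gal deg 4} shows $A(k)$ is nontrivial (its reduction mod $p$ has rank $<3$ — indeed with $a+b\equiv 0$ one checks $a^2+b^2\equiv 2a^2$, so generically $A(k)$ is cyclic, but in all cases $A(k)\neq 0$), while simultaneously $\mathcal K(G)\neq 0$ by Lemma \ref{mathcal{K} in case non-Gal deg 4}. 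By the logic after Proposition \ref{central}, a nonzero $\mathcal K(G)$ does not immediately give $X(L(\kcyc))\neq 0$ — that is the gap I must close. The cleanest route is the one already used in the proof of Proposition \ref{target}: if $X(L(k))\neq 0$ then $L(L(k))\tilde k/\tilde k$ is a nontrivial unramified $p$-extension, contradicting $X(\tilde k)=0$; so I would instead produce a nontrivial unramified $p$-extension of $\tilde k$ directly from the condition $a+b\equiv 0\pmod p$. Concretely, when $a+b\equiv 0\pmod p$ the fixed field of $\langle D_\p, D_{\overline\p}\rangle=\langle\gam, x\rangle$ inside $\tilde k$ degenerates in the way described in Remark \ref{a neq 0, b neq 0, a+b neq 0}(ii), and I expect this to force a nontrivial class (coming from the relation that becomes divisible by $p$ in the presentation matrix $K$) to survive into $X(\tilde k)$; making this precise is the heart of the argument.

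The main obstacle is exactly this last step: converting ``$\mathcal K(G)\neq 0$'' into ``$X(\tilde k)\neq 0$''. The difficulty is that $\mathcal K(G)\neq 0$ only guarantees a nonzero target of the surjection $\mathcal K(G)\surj X(L(\kcyc))_G$ if the unit contribution $(E(k)\cap N_{L(\kcyc)/k}\mathbb A^\times_{L(\kcyc)})\ox_{\Z}\Zp\to\mathcal K(G)$ is not surjective. So I would need to control this map — either by a rank/dimension count (comparing $\dim_{\Fp}\mathcal K(G)/p$ against the number of relevant units, using that $k$ is a CM-field so $E(k)$ has $\Zp$-rank $r_2(k)-1=1$ and the relevant image lands in a subspace detectable from the matrix $K$ mod $p$), or by the observation that the unit map factors through the ``real part'' while the obstruction we exhibit lives in $X(\kcyc)^-$, so the complex-conjugation eigenspace decomposition separates them. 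I anticipate the argument runs: $\mathcal K(G)^-$ sees the row of $K$ that becomes singular precisely when $a+b\equiv 0$, the unit contribution to $\mathcal K(G)$ lies in $\mathcal K(G)^+$ (or is killed in $\mathcal K(G)^-$) because $[E(k):E(k^+)]$ is prime to $p$, and hence $X(L(\kcyc))_G^-\neq 0$, giving $X(\tilde k)\neq 0$. I would also cross-check consistency with Corollary \ref{nec of finite} and with the $[k:\Q]=2$ case worked out after Proposition \ref{target}, where the analogous statement ($X(\tilde k)=0\iff X(\kcyc)\simeq\Zp$) is proved by exactly this $\mathcal K$-vanishing mechanism.
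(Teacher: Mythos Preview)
Your ``if'' direction is correct and matches the paper: $\mathcal K(G)=0$ by Lemma~\ref{mathcal{K} in case non-Gal deg 4}, hence $X(L(\kcyc))_G=0$ by Proposition~\ref{central}, hence $X(\tilde k)=0$ by Nakayama.

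The ``only if'' direction, however, has a genuine gap that cannot be repaired along the lines you suggest. Your plan is to show that the unit contribution to $\mathcal K(G)$ lands in $\mathcal K(G)^+$ while the obstruction lives in $\mathcal K(G)^-$. But a direct computation shows $\mathcal K(G)^-=0$. Indeed, $J=(\sigma|_k)^2$ acts on the basis $\{[\gam,x],[\gam,y],[x,y]\}$ of $H_2(G,\Zp)$ by $[\gam,x]\mapsto -[\gam,x]+(a+b)[x,y]$, $[\gam,y]\mapsto -[\gam,y]-(a-b)[x,y]$, $[x,y]\mapsto [x,y]$. Projecting the four image vectors $[\gam,x]$, $[\gam,y]+a[x,y]$, $-[\gam,x]+(a+b)[x,y]$, $-[\gam,y]+b[x,y]$ to the minus part, one finds they already span $H_2(G,\Zp)^-$. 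Thus $\mathcal K(G)=\mathcal K(G)^+\cong\Zp/(a+b)$, generated by the class of $[x,y]$. Since $E(k)\ox\Zp$ is also $J$-invariant and of $\Zp$-rank~$1$, the eigenspace decomposition gives no separation whatsoever, and the exact sequence~(\ref{central ex}) alone cannot rule out $X(L(\kcyc))_G=0$.

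The paper's argument for this direction is entirely different and does not go through $\mathcal K(G)$ at all. Assuming $a+b\equiv 0\pmod p$ and $X(\tilde k)=0$, one constructs the $\Zp$-extension $\ki\leftrightarrow\langle\gam x^a,\,xy^{-1}\rangle$ (Lemma~\ref{ki in case non-Gal deg 4}), in which every prime above $p$ is non-split and ramified, and on whose Galois group $J$ acts as $-1$. With $n:=\ord_p(a+b)>0$, the fields $P_n\leftrightarrow\langle D_\p,D_{\overline\p}\rangle$ and $Q_n\leftrightarrow\langle D_\q,D_{\overline\q}\rangle$ furnish a cyclic degree-$p^n$ extension $L'/\ki$ inside $P_\infty Q_\infty$ in which every prime above $p$ splits completely; hence $X'(\ki)\surj\Gal(L'/\ki)$. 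Corollary~\ref{cor anticyclotomic case} (which uses $X(\tilde k)=0$) forces $J$ to act trivially on $\Gal(L'/\ki)$, but by construction $J$ acts as $-1$ on $\Gal(L'/\ki)=\langle\gam x^a,xy^{-1}\rangle/\langle\gam x^a,(xy^{-1})^{a+b}\rangle$, a contradiction. The key input you are missing is Corollary~\ref{cor anticyclotomic case}, which is precisely the tool designed to produce non-triviality of $X(\tilde k)$ from an anticyclotomic-type $\Zp$-extension.
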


We now explain how the main theorem in this case follows from Proposition \ref{main thm in case non-Gal deg 4}.
By Lemma \ref{mathcal{K} in case non-Gal deg 4}, when $\dim_{\Fp}A(k)/pA(k) \le 1$, 
the proposition is simply a restatement of the main theorem in different terms.
Furthermore, if $A(k)$ is cyclic, then $a+b \not\equiv 0 \pmod p$ can be easily verified from Corollary \ref{A(k) in case non-Gal deg 4}.
Therefore, the proposition implies that, 
when $A(k)$ is cyclic, 
$X(\tilde{k})$ is trivial if and only if $k$ satisfies conditions {\rm (I)}, {\rm (II)}, and {\rm (III)} given in {\rm \S\ref{Narrowing-down target}}.
Similarly, when $\dim_{\Fp}A(k)/pA(k)=2$, we have $a+b \equiv 0 \pmod p$, and thus $X(\tilde{k})$ is nontrivial.

To prove Proposition \ref{main thm in case non-Gal deg 4}, we need the following lemma.

\begin{lem}\label{ki in case non-Gal deg 4}
Define 
$\ki$ by $\ki \leftrightarrow \langle \gam x^a,xy^{-1} \rangle$,
where the notation $\leftrightarrow$ is as in Definition {\rm \ref{leftrightarrow}}.
Then $\ki/k$ is a $\Zp$-extension in which $p$ is non-split and ramified.
Moreover, the complex conjugation $J \in \Gal(k/k^+)$ acts on $\Gal(\ki/k)$ as inverse.
\end{lem}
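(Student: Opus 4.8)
The plan is to work entirely with the explicit presentation of $\Gal(\tilde{k}/k) \isom \Zp^{\op 3}$ from Lemma \ref{Z in case non-Gal deg 4}, with topological $\Zp$-basis $\{\gam, x, y\}$, and to package everything into the single $\Zp$-linear map cutting out $\ki$. Write $H := \langle \gam x^a,\, xy^{-1} \rangle$ for the closed subgroup with $\ki \leftrightarrow H$; in the basis $\{\gam,x,y\}$ its generators are $(1,a,0)$ and $(0,1,-1)$, and the minor of the first two columns of $\left(\begin{smallmatrix}1&a&0\\0&1&-1\end{smallmatrix}\right)$ equals $1$, so the inclusion $H \inj \Gal(\tilde{k}/k)$ is split and $\Gal(\tilde{k}/k)/H \isom \Zp$. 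Thus $\ki/k$ is a $\Zp$-extension, and the quotient map may be taken to be the surjection $\phi\colon \Gal(\tilde{k}/k) \to \Zp = \Gal(\ki/k)$ with $\phi(\gam)=-a$, $\phi(x)=\phi(y)=1$; indeed $\phi(\gam x^a)=0=\phi(xy^{-1})$, and since $H$ is saturated this forces $\Ker\phi = H$.

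Next I would read off ramification and splitting from $\phi$: a prime $\mathfrak{l} \in \{\p,\q,\overline{\p},\overline{\q}\}$ ramifies in $\ki/k$ exactly when $\phi(I_\mathfrak{l})\neq 0$, and is non-split exactly when $\phi(D_\mathfrak{l}) = \Zp$. Using the inertia generators from Lemma \ref{Z in case non-Gal deg 4}, $\phi$ sends $\gam$, $\gam x^ay^b$, $\gam x^{a-b}y^{a+b}$, $\gam x^{-b}y^a$ to $-a$, $b$, $a$, $-b$, all nonzero since $a\neq 0$ and $b\neq 0$ by Remark \ref{a neq 0, b neq 0, a+b neq 0}; hence every prime above $p$ ramifies. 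For splitting, each of $D_\p$, $D_\q$, $D_{\overline{\p}}$, $D_{\overline{\q}}$ contains one of $x$, $y$, $x^{-1}$, $y^{-1}$, on which $\phi$ takes the value $\pm 1$, so $\phi(D_\mathfrak{l}) = \Zp$ in every case. This proves that $p$ is non-split and ramifies in $\ki/k$.

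For the final assertion, $\Gal(k/k^+) = \langle J\rangle$ acts on $\Gal(\tilde{k}/k)$ via conjugation inside $\Gal(\tilde{k}/k^+)$ (legitimate since $\tilde{k}/k^+$ is Galois), and this action is $-1$ on $X(\kcyc)=\langle x,y\rangle$ while trivial on $\Gal(\kcyc/k)$, as recorded at the end of \S \ref{Narrowing-down target}. Since $(\sigma|_k)^2 = J$ together with $\sigma|_k(\gam)=\gam x^a y^b$, $\sigma|_k(x)=y$, $\sigma|_k(y)=x^{-1}$ (from the proof of Lemma \ref{Z in case non-Gal deg 4}), one computes $J(\gam) = \gam x^{a-b}y^{a+b}$. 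Then $\phi\circ J$ sends $\gam \mapsto -a+(a-b)+(a+b)=a=-\phi(\gam)$, $x\mapsto -1=-\phi(x)$, $y\mapsto -1=-\phi(y)$, so $\phi\circ J = -\phi$ on $\Gal(\tilde{k}/k)$. Hence $J$ preserves $\Ker\phi = H$ (in particular $\ki/k^+$ is Galois, so the statement ``$J$ acts on $\Gal(\ki/k)$'' makes sense) and induces $-1$ on $\Gal(\ki/k)\isom\Zp$.

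All of this is elementary $\Zp$-linear algebra once Lemma \ref{Z in case non-Gal deg 4} is in hand; the only step needing care is pinning down the $J$-action on the ramified coordinate, i.e. the identity $J(\gam)=\gam x^{a-b}y^{a+b}$. This follows either from squaring the $\sigma|_k$-formulas as above, or intrinsically from $J(\p)=\overline{\p}$, which gives $J(I_\p)=I_{\overline{\p}}$, so $J(\gam)$ is a generator of $I_{\overline{\p}}$ whose $\gam$-component is $1$ because $J$ is trivial on $\Gal(\kcyc/k)$.
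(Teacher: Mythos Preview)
Your proof is correct and follows essentially the same approach as the paper: both arguments rest on the explicit $\Zp$-basis $\{\gam,x,y\}$ from Lemma~\ref{Z in case non-Gal deg 4} and reduce everything to elementary $\Zp$-linear algebra. The paper verifies each claim by computing subgroups directly (e.g.\ $\langle \gam x^a, xy^{-1}, I_\p\rangle \neq H$ and $\langle \gam x^a, xy^{-1}, D_\p\rangle = \langle \gam,x,y\rangle$), while you package the same computations via the single surjection $\phi$ with $\Ker\phi = H$; this is just the dual formulation and makes the verification slightly more uniform, but there is no substantive difference in content.
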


\begin{proof}
Note that 
$$
J(\gam x^a)
=
\gam x^{a-b} y^{a+b} \cdot x^{-a}
=
\gam x^a \cdot x^{-a-b}y^{a+b} \in \langle \gam x^a, xy^{-1} \rangle.
$$
Since 
$\Gal(\ki/k)
=\langle \gam,x,y \rangle/\langle \gam x^a,xy^{-1} \rangle 
=\langle x, \gam x^a,xy^{-1} \rangle/\langle \gam x^a,xy^{-1} \rangle\isom \Zp$, 
it follows that $\ki/k$ is a $\Zp$-extension and that $J$ acts on $\Gal(\ki/k)$ as inverse.
On the other hand, by Lemma \ref{Z in case non-Gal deg 4} and Remark \ref{a neq 0, b neq 0, a+b neq 0}(ii), we have  
$$
\begin{cases}
\langle \gam x^a,xy^{-1}, I_\p \rangle
=
\langle \gam x^a,xy^{-1}, I_{\overline{\p}} \rangle
=
\langle \gam, x^a, xy^{-1} \rangle
\neq
\langle \gam x^a, xy^{-1} \rangle,
\\
\langle \gam x^a,xy^{-1}, I_\q \rangle
=
\langle \gam x^a,xy^{-1}, I_{\overline{\q}} \rangle
=
\langle \gam x^a, y^b, xy^{-1} \rangle
\neq
\langle \gam x^a, xy^{-1} \rangle.
\end{cases}
$$
Similarly, all four of the groups
%$$
%\begin{cases}
$\langle \gam x^a,xy^{-1}, D_\p \rangle$,
$\langle \gam x^a,xy^{-1}, D_{\overline{\p}} \rangle$,
%\\
$\langle \gam x^a,xy^{-1}, D_\q \rangle$,
and
$\langle \gam x^a,xy^{-1}, D_{\overline{\q}} \rangle$
%=\langle \gam, x,y \rangle.
%\end{cases}
%$$
coincide with $\langle \gam, x,y \rangle$.
These observations imply that every prime ideal lying above $p$ is non-split and ramified in $\ki/k$.
\end{proof}

We now prove Proposition \ref{main thm in case non-Gal deg 4}.
If $a+b \not\equiv 0 \pmod p$, then $X(\tilde{k})$ is trivial by Lemma \ref{mathcal{K} in case non-Gal deg 4} and Proposition \ref{central}
(for an alternative proof, see \S\ref{another proof in case non-Gal deg 4}).
To prove the converse by contradiction, suppose that $a+b \equiv 0 \pmod p$ and that $X(\tilde{k})$ is trivial.
Set
$n:=\ord_p(a+b)$.
Note that $n <\infty$ by Remark \ref{a neq 0, b neq 0, a+b neq 0}(ii), and that if $\dim_{\Fp}A(k)/pA(k)=2$, then $n>0$ by Corollary \ref{A(k) in case non-Gal deg 4}.
Define $P_\infty$, $Q_\infty$, $P_n$, and $Q_n$ by
$$
\begin{cases}
P_\infty \leftrightarrow D_\p=\langle \gam,x \rangle,
&
Q_\infty \leftrightarrow D_\q=\langle \gam x^a,y \rangle,
\\
P_n \leftrightarrow \langle D_\p, D_{\overline{\p}} \rangle=\langle \gam, x, y^{a+b} \rangle,
&
Q_n \leftrightarrow \langle D_\q, D_{\overline{\q}} \rangle=\langle \gam x^a,x^{a+b},y \rangle.
\end{cases}
$$
Then it follows that both $P_\infty$ and $Q_\infty$ are $\Zp$-extensions of $k$, 
and that each of $P_n$ and $Q_n$ is contained in $P_\infty$ and $Q_\infty$, respectively.
Let $\ki$ be the field defined in Lemma \ref{ki in case non-Gal deg 4}.
Then we have $\ki \subset P_\infty Q_\infty$, since 
$P_\infty Q_\infty \leftrightarrow \langle \gam x^a \rangle$.
Moreover, the extension $P_\infty Q_\infty/\ki$ is a $\Zp$-extension and, in particular, is cyclic
(see Figure \ref{X neq 0 galois diagram in case non-Gal deg 4}).
\begin{figure}[H]
$$ 
%$$挟みでなくてもxy-picは使えるが，この図を中央にする方法がわからない
%点の定義を先にしてから矢印を配置しないとずれる
%\hspace*{200pt}
\begin{xy}
(0,0) *{\text{$k$}}="k",
(-15,10) *{\text{$P_n$}}="P",
(-30,20) *{\text{$P_\infty$}}="Pinf",
(15,10) *{\text{$Q_n$}}="Q",
(30,20) *{\text{$Q_\infty$}}="Qinf",
(0,20) *{\text{$k_\infty$}}="kinf",
(0,40) *{\text{$P_\infty Q_\infty$}}="PinfQinf",
(0,28) *{\text{$L'$}}="L",
%%%
%%%
(75,0) *{\text{$\langle \gam,x,y \rangle$}}="gk",
(60,10) *{\text{$\langle \gam,x,y^{a+b} \rangle$}}="gP",
(45,20) *{\text{$\langle \gam,x \rangle$}}="gPinf",
(90,10) *{\text{$\langle \gam x^a,x^{a+b}, y \rangle$}}="gQ",
(105,20) *{\text{$\langle \gam x^a,y \rangle$}}="gQinf",
(75,20) *{\text{$\langle \gam x^a,xy^{-1} \rangle$}}="gkinf",
(75,40) *{\text{$\langle \gam x^a \rangle$}}="gPinfQinf",
(75,28) *{\text{$\langle \gam x^a,(xy^{-1})^{a+b} \rangle$}}="gL",
%%%
\ar@{-} "k";"P"^{\text{$\p, \overline{\p}$: split}}
\ar@{-} "P";"Pinf"
\ar@{-} "k";"Q"_{\text{$\q, \overline{\q}$: split}}
\ar@{-} "Q";"Qinf"
\ar@{-} "k";"kinf"
\ar@{-} "kinf";"L"^{\text{$p$: split}}%_{\text{unr}}
\ar@{-} "L";"PinfQinf"
\ar@{-} "Pinf";"PinfQinf"
\ar@{-} "Qinf";"PinfQinf"
%%%
\ar@{-} "gk";"gP"%^{\text{$\p\overline{\p}$:sp}},
\ar@{-} "gP";"gPinf"
\ar@{-} "gk";"gQ"%_{\text{$\q\overline{\q}$:sp}}
\ar@{-} "gQ";"gQinf"
\ar@{-} "gk";"gkinf"
\ar@{-} "gkinf";"gL"%^{\text{$p$:sp}}_{\text{unr}}
\ar@{-} "gL";"gPinfQinf"
\ar@{-} "gPinf";"gPinfQinf"
\ar@{-} "gQinf";"gPinfQinf"
\end{xy}
$$
\caption{}
\label{X neq 0 galois diagram in case non-Gal deg 4}
\end{figure}
Note that the prime $p$ does not split in the extension $\ki/k$ by Lemma \ref{ki in case non-Gal deg 4}.
Since the prime ideals $\p$, $\overline{\p}$ and $\q$, $\overline{\q}$ split completely in the extensions $P_n/k$ and $Q_n/k$, respectively,
there exists a cyclic extension $L'$ of $\ki$ of degree $p^n$ such that every prime lying above $p$ splits completely in $L'/\ki$.
Moreover, the extension $L'/\ki$ is unramified, again by Lemma \ref{ki in case non-Gal deg 4}.
Therefore, there exists a natural surjective homomorphism
$$
X'(\ki) \surj \Gal(L'/\ki) =\langle \gam x^a, xy^{-1} \rangle/\langle \gam x^a, (xy^{-1})^{a+b} \rangle.
$$
Thus, by Corollary \ref{cor anticyclotomic case}, the induced action of the complex conjugation $J$ on $\Gal(L'/\ki)$ is trivial.
However, we can verify that $J$ actually acts on 
$\Gal(L'/\ki) =\langle xy^{-1},\gam x^a \rangle/\langle (xy^{-1})^{a+b},\gam x^a  \rangle$
as inverse.
This leads to a contradiction.
Therefore, we conclude that if $X(\tilde{k})$ is trivial, then $a+b \not\equiv 0 \pmod p$.

\subsubsection{An alternative proof of the sufficient condition}
\label{another proof in case non-Gal deg 4}

We present an alternative proof of the sufficient condition stated in Proposition \ref{main thm in case non-Gal deg 4}.
The approach is based on the ideas of Minardi \cite{Minardi}, Itoh \cite{Itoh2011}, and Fujii \cite{Fujii2015}.
In other words, we show that if $a+b \not\equiv 0 \pmod p$, then $X(\tilde{k})$ is trivial, without relying on Proposition \ref{central}.

Suppose that $a+b \not\equiv 0 \pmod p$ (i.e., $M_p^{ \{ \p, {\overline{\p}} \}}(k)=k$).
Then, by Corollary \ref{A(k) in case non-Gal deg 4}, we have $\dim_{\Fp}A(k)/pA(k) \le 1$.
Define the fields $k^{(1)}$, $k^{(2)}$, $N_\p$, 
and their conjugates $\overline{k^{(1)}}$, $\overline{k^{(2)}}$, and $N_{\overline{\p}}$ 
by
$$
\left\{
\begin{array}{lll}
k^{(1)} \leftrightarrow \langle I_\p, I_\q \rangle=\langle \gam, x^ay^b \rangle,
&
k^{(2)} \leftrightarrow I_\p=\langle \gam \rangle,
&
N_\p \leftrightarrow D_\p=\langle \gam, x \rangle,
\\
\overline{k^{(1)}} \leftrightarrow 
\langle I_{\overline{\p}}, I_{\overline{\q}} \rangle
=\langle \gam x^{a-b}y^{a+b}, x^{a}y^{b} \rangle,
&
\overline{k^{(2)}} \leftrightarrow 
I_{\overline{\p}}
=\langle \gam x^{a-b}y^{a+b} \rangle,
&
N_{\overline{\p}} \leftrightarrow 
D_{\overline{\p}}
=\langle \gam y^{a+b}, x \rangle.
\end{array}
\right.
$$
We may assume, without loss of generality, that $a \not\equiv 0 \pmod p$ in the following argument.
Otherwise 
(which occurs when $A(k)=0$, and consequently $b \not\equiv 0 \pmod p$), 
by replacing the role of $\q$ with $\overline{\q}$, 
for example, 
$k^{(1)} \leftrightarrow \langle I_\p, I_{\overline{\q}} \rangle
=\langle \gam , x^{-b}y^a \rangle$,
we arrive at the same conclusion.

\begin{lem}\label{X^1=0 in case non-Gal deg 4}
The module $X(k^{(1)})$ is trivial.
\end{lem}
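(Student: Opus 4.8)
\emph{The plan is to follow the method of Minardi, Itoh and Fujii: identify $k^{(1)}$ with a maximal restricted‑ramification abelian $p$‑extension of $k$ and then apply Nakayama's lemma.} First I would record the structure of $k^{(1)}/k$. Since $a\not\equiv0\bmod p$, the subgroup $\langle I_\p,I_\q\rangle=\langle\gam,x^ay^b\rangle$ is a rank‑$2$ direct summand of $\Gal(\tilde k/k)\isom\Zp^{3}$, so $k^{(1)}/k$ is a $\Zp$‑extension which, by construction, is unramified at $\p$ and $\q$. On the other hand $\overline\p$ and $\overline\q$ must ramify in $k^{(1)}/k$: otherwise $k^{(1)}/k$ would be unramified at every prime above $p$, hence unramified everywhere (only primes above $p$ can ramify in $\tilde k/k$), which is impossible for an infinite extension since $A(k)$ is finite. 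Thus, among the primes above $p$, the extension $k^{(1)}/k$ is ramified exactly at $\overline\p$ and $\overline\q$.

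The heart of the proof is to show that $k^{(1)}$ \emph{equals} the maximal abelian $p$‑extension $M'$ of $k$ that is unramified outside $\{\overline\p,\overline\q\}$. By class field theory and Leopoldt's conjecture for $k$ and $p$, $\Gal(M'/k)$ has $\Zp$‑rank $1$, and it is built from the two pieces $A(k)/\langle[\overline\p],[\overline\q]\rangle$ and $(\mathcal{O}_{\overline\p}^{\times}\times\mathcal{O}_{\overline\q}^{\times})\ox\Zp$ modulo the global units. I would show both pieces are $\Zp$‑torsion‑free, that is, $\Gal(M'/k)\isom\Zp$. For the class‑group piece: $X(\kcyc^+)=0$ together with the surjectivity of the norm maps in the (ramified) $\Zp$‑extension $\kcyc^+/k^+$ gives $A(k^+)=0$, hence $A(k)^+\isom A(k^+)=0$, i.e.\ $A(k)=A(k)^-$; therefore $[\overline\p]=[\p]^{-1}$ and $[\overline\q]=[\q]^{-1}$ in $A(k)$, so $\langle[\overline\p],[\overline\q]\rangle=\langle[\p],[\q]\rangle=D(k)=A(k)$ by the hypothesis $A(k)=D(k)$. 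For the unit piece: it suffices that the fundamental unit $\ep$ of $k$ is not simultaneously a local $p$‑th power at $\overline\p$ and at $\overline\q$; were it so, then, since $\overline\p$ and $\overline\q$ lie above the two distinct primes of $k^+$ above $p$ and $\ep$ may be taken in $k^+$ up to a factor of order prime to $p$, the unit $\ep$ would be a local $p$‑th power at every prime of $k^+$ above $p$, making $k^+(\ep^{1/p})/k^+$ an everywhere‑unramified $p$‑extension — impossible since $A(k^+)=0$ and $\ep\notin(k^+)^{\times p}$. Hence $\Gal(M'/k)\isom\Zp$, so $M'\subseteq\tilde k$, and, being the fixed field of the inertia subgroups of $\p$ and $\q$, it equals $\tilde k^{\langle I_\p,I_\q\rangle}=k^{(1)}$.

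To finish: writing $\Gam':=\Gal(k^{(1)}/k)$, one has $X(k^{(1)})_{\Gam'}\isom\Gal(M/k^{(1)})$, where $M$ is the maximal subextension of $L(k^{(1)})/k$ that is abelian over $k$. But any such $M$ is unramified over $k^{(1)}$, hence unramified outside $\{\overline\p,\overline\q\}$ over $k$, hence contained in $M'=k^{(1)}$; since also $k^{(1)}\subseteq M$, we get $M=k^{(1)}$ and $X(k^{(1)})_{\Gam'}=0$. Nakayama's lemma then gives $X(k^{(1)})=0$.

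The step I expect to be the real obstacle is the identification $M'=k^{(1)}$, that is, the torsion‑freeness of $\Gal(M'/k)$: this is exactly where \emph{both} the class‑field input $A(k)=D(k)$ and the unit input $A(k^+)=0$ (via Leopoldt) are needed, and where the running hypotheses $a\not\equiv0\bmod p$ (so that $k^{(1)}/k$ is genuinely a $\Zp$‑extension) and, implicitly, $\dim_{\Fp}A(k)/pA(k)\le1$ enter. In the excluded case $a+b\equiv0\bmod p$ the $\Zp$‑rank‑one group $\Gal(M'/k)$ would in general acquire torsion, and the argument — consistently with Proposition \ref{main thm in case non-Gal deg 4} — would break down.
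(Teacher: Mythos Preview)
Your Nakayama reduction --- show $X(k^{(1)})_{\Gam'}=0$ by proving that the maximal subfield $M$ of $L(k^{(1)})$ abelian over $k$ equals $k^{(1)}$ --- is exactly the paper's framework. The gap is in your route to $M=k^{(1)}$: you aim for the stronger statement $M'=k^{(1)}$, with $M'$ the maximal abelian $p$-extension of $k$ unramified outside $\{\overline\p,\overline\q\}$, by arguing that $\Gal(M'/k)$ is torsion-free. But the class-field-theory exact sequence is
\[
0\longrightarrow\bigl((\mathcal O_{\overline\p}^\times\times\mathcal O_{\overline\q}^\times)\otimes\Zp\bigr)\big/\overline{E(k)}\longrightarrow\Gal(M'/k)\longrightarrow A(k)\longrightarrow 0,
\]
with $A(k)$ itself on the right, \emph{not} $A(k)/\langle[\overline\p],[\overline\q]\rangle$. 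Your unit argument does show the left term is $\Zp$, and the equality $A(k)=\langle[\overline\p],[\overline\q]\rangle$ is also correct, but neither determines the extension class: when $A(k)$ is nontrivial cyclic the middle group could still be $\Zp\times(\text{nontrivial torsion})$, and you give no argument excluding this. (Nor does the hypothesis $a+b\not\equiv0$ that you invoke at the end enter your torsion computation.) So for cyclic $A(k)\neq0$ the identification $M'=k^{(1)}$ remains unproved.

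The paper bypasses $M'$ entirely and works with $k^{(2)}=\tilde k^{I_\p}$, which already lies inside $\tilde k$. One first gets $M\subset\tilde k$: since $M/k^{(1)}$ is unramified and $k^{(1)}\kcyc\subset\tilde k=L(\kcyc)$, the compositum $M\kcyc/\kcyc$ is unramified abelian, hence contained in $L(\kcyc)=\tilde k$ by the standing hypothesis~(III). Then $M\subset k^{(2)}$ because $\p$ is unramified in $M/k$. Finally $\q$ is totally ramified in $k^{(2)}/k^{(1)}$ --- the image of $I_\q=\langle\gam x^ay^b\rangle$ is all of $\Gal(k^{(2)}/k^{(1)})=\langle x^ay^b\rangle$, using only $a\not\equiv0$ --- so $k^{(2)}\cap L(k^{(1)})=k^{(1)}$, whence $M=k^{(1)}$. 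No torsion analysis of any ray-class-type Galois group is needed.
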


\begin{proof}
Note that when $A(k)=0$, the claim follows directly from \cite[Corollary 3.3]{Itoh2011}.
However, this fact is not necessary for our argument.
Let $L_0$ denote the maximal subfield of $L(k^{(1)})$ that is abelian over $k$.
%Lの記号は Itoh に合わせる(Fujii とは1ズレる)
Then, since $\Gal(k^{(1)}/k) \isom \Zp$, the group $\Gal(L_0/k^{(1)})$ is isomorphic to $X(k^{(1)})_{\Gal(k^{(1)}/k)}$. 
Since $\tilde{k}$ is the maximal abelian $p$-extension of $k$ unramified outside $p$, it follows that $k^{(2)}$ is the maximal abelian $p$-extension of $k$ unramified outside $\{ \q,\overline{\p}, \overline{\q} \}$.
Therefore, since $L_0/k$ is an abelian $p$-extension that is unramified outside $\{ \q,\overline{\p}, \overline{\q} \}$, we have $L_0 \subset k^{(2)} \cap L(k^{(1)})$.
On the other hand, by the definition of $k^{(2)}$, all primes of $k^{(1)}$ lying above $\q$ are totally ramified in the extension $k^{(2)} \cap L(k^{(1)})/k^{(1)}$.
Therefore, $k^{(2)} \cap L(k^{(1)})=k^{(1)}$, which implies that $L_0=k^{(1)}$.
It follows that $X(k^{(1)})_{\Gal(k^{(1)}/k)}$ is trivial, and hence $X(k^{(1)})$ is also trivial by Nakayama's lemma. 
\end{proof}

\begin{lem}\label{X^2=0 in case non-Gal deg 4}
The module $X(k^{(2)})$ is trivial.
\end{lem}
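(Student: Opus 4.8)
The plan is to imitate the proof of Lemma \ref{X^1=0 in case non-Gal deg 4}, now applied to $k^{(2)}$ in place of $k^{(1)}$; the situation is in fact slightly simpler, since $k^{(2)}/k$ is already a $\Zp^{\op2}$-extension. First I would record the ramification of $k^{(2)}/k=\tilde k^{\langle\gam\rangle}/k$: by Lemma \ref{Z in case non-Gal deg 4} the inertia group of $\p$ is $I_\p=\langle\gam\rangle=\Gal(\tilde k/k^{(2)})$, so $\p$ is unramified in $k^{(2)}/k$, whereas $I_\q=\langle\gam x^ay^b\rangle$, $I_{\overline\p}=\langle\gam x^{a-b}y^{a+b}\rangle$ and $I_{\overline\q}=\langle\gam x^{-b}y^a\rangle$ are not contained in $\langle\gam\rangle$ (using $a\not\equiv0$, $b\not\equiv0\bmod p$ from Remark \ref{a neq 0, b neq 0, a+b neq 0}(ii)), so $\q,\overline\p,\overline\q$ are ramified in $k^{(2)}/k$. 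Thus $\Gal(k^{(2)}/k)\isom\Zp^{\op2}$ and $k^{(2)}/k$ is unramified outside $\{\q,\overline\p,\overline\q\}$.

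Next I would take $L_0$ to be the maximal subfield of $L(k^{(2)})$ that is abelian over $k$. Since $L(k^{(2)})/k^{(2)}$ is unramified, $L_0/k$ is an abelian $p$-extension unramified outside $\{\q,\overline\p,\overline\q\}$, in particular unramified at $\p$; arguing exactly as in the proof of Lemma \ref{X^1=0 in case non-Gal deg 4} — where $\tilde k^{\langle\gam\rangle}=k^{(2)}$ is seen to contain every abelian $p$-extension of $k$ unramified outside $\{\q,\overline\p,\overline\q\}$ — one obtains $L_0\subseteq k^{(2)}$. As $k^{(2)}/k$ is abelian we also have $k^{(2)}\subseteq L_0$, so $L_0=k^{(2)}$, i.e. $X(k^{(2)})_{\Gal(k^{(2)}/k)}\isom\Gal(L_0/k^{(2)})=0$. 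Since $X(k^{(2)})$ is finitely generated over $\Zp\llbracket\Gal(k^{(2)}/k)\rrbracket$, Nakayama's lemma then yields $X(k^{(2)})=0$.

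The load-bearing step is "$L_0\subseteq k^{(2)}$", that is, the assertion that the maximal abelian $p$-extension of $k$ unramified outside $\{\q,\overline\p,\overline\q\}$ is torsion-free over $k$ and hence equal to $k^{(2)}$ rather than larger. When $A(k)=0$ this is immediate: by Corollary \ref{A(k) in case non-Gal deg 4} one has $a^2+b^2\in\Zp^\times$, so already the images of $I_\q$ and $I_{\overline\p}$ span $\Gal(k^{(2)}/k)\isom\Zp^{\op2}$. When $A(k)$ is cyclic there is a class-group contribution of order $p^{\ord_p(a^2+b^2)}$, and one must use $A(k)=D(k)$ — so that the ideal classes of $\q,\overline\p,\overline\q$ still generate $A(k)$ — to see that it is killed once ramification at $\q,\overline\p,\overline\q$ is permitted. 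This is precisely the point already settled for $k^{(1)}$ in the proof of Lemma \ref{X^1=0 in case non-Gal deg 4}, and I would reuse it verbatim.
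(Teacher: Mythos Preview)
There is a genuine gap. Your claimed identification $X(k^{(2)})_{\Gal(k^{(2)}/k)}\isom\Gal(L_0/k^{(2)})$ is the step that fails, and it fails precisely because $\Gal(k^{(2)}/k)\isom\Zp^{\oplus 2}$ has rank $2$ rather than $1$. Writing $G=\Gal(L(k^{(2)})/k)$ and $N=X(k^{(2)})$, your $L_0$ is the fixed field of $[G,G]$, so $\Gal(L_0/k^{(2)})=N/[G,G]$, whereas the coinvariants are $N_{G/N}=N/[G,N]$. One always has $[G,N]\subset[G,G]$, giving a surjection $N_{G/N}\twoheadrightarrow\Gal(L_0/k^{(2)})$, but not an isomorphism unless $G/N$ is procyclic. (For a concrete model of what can go wrong, take $G$ to be the Heisenberg group over $\Zp$: then $G/[G,G]\isom\Zp^{\oplus2}$, $N=[G,G]\isom\Zp$ is central, so $[G,N]=1$ and $N_{G/N}=N\neq0$, yet $N/[G,G]=0$.) Thus $L_0=k^{(2)}$ does \emph{not} yield $X(k^{(2)})_{\Gal(k^{(2)}/k)}=0$, and Nakayama cannot be invoked. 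Incidentally, the step you flagged as ``load-bearing'', namely $L_0\subseteq k^{(2)}$, is actually fine: since each $I_\bullet\isom\Zp$ surjects onto $\Gal(\kcyc/k)$, the maximal abelian $p$-extension of $k$ unramified outside $p$ is unramified over $\kcyc$, hence equals $L(\kcyc)=\tilde k$; the subfield unramified at $\p$ is then exactly $\tilde k^{I_\p}=k^{(2)}$.

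The paper's proof sidesteps the rank-$2$ obstruction by climbing in two rank-$1$ steps. Lemma~\ref{X^1=0 in case non-Gal deg 4} already gives $X(k^{(1)})=0$ via the coinvariants argument over $k$ (valid since $\Gal(k^{(1)}/k)\isom\Zp$). Then one observes, using the standing hypothesis $a\not\equiv0\pmod p$, that $\langle I_\p,D_\q\rangle=\langle\gam,x^a,y\rangle=\langle\gam,x,y\rangle$, so $\q$ is totally inert in $k^{(1)}/k$; hence $k^{(2)}/k^{(1)}$ is a $\Zp$-extension in which a \emph{single} prime (the one above $\q$) is ramified, totally. Combined with $X(k^{(1)})=0$, this forces $X(k^{(2)})=0$. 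To repair your argument you would likewise have to descend to a base with procyclic quotient---which is exactly the role $k^{(1)}$ plays.
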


\begin{proof}
From the assumption that $a \not\equiv 0 \pmod p$, we obtain 
$\langle I_\p, D_\q \rangle=\langle \gam, x^a,y \rangle=\langle \gam,x,y \rangle$,
which implies that $\q$ is totally inert in the extension $k^{(1)}/k$.
Hence, there exists a unique prime of $k^{(1)}$ lying above $\q$.
Therefore, in the extension $k^{(2)}/k^{(1)}$, exactly one prime is ramified, and this prime is totally ramified.
Combining this with Lemma \ref{X^1=0 in case non-Gal deg 4},
we conclude that $X(k^{(2)})$ is trivial by Iwasawa's criterion \cite{Iwasawa1956}.
\end{proof}

Let $L_2$ denote the maximal subfield of $L(\tilde{k})$ that is abelian over $k^{(2)}$.
Then, since $\Gal(\tilde{k}/k^{(2)}) \isom \Zp$, the group $\Gal(L_2/k^{(2)})$ is isomorphic to $X(\tilde{k})_{\Gal(\tilde{k}/k^{(2)})}$.

\begin{lem}\label{L_2/N_p is abelian in case non-Gal deg 4}
With the notation above, the extension $L_2/N_\p$ is abelian.
\end{lem}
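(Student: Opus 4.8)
The plan is to apply Lemma \ref{G/H act on H}. First note the tower $N_\p \subseteq k^{(2)} \subseteq \tilde{k} \subseteq L_2 \subseteq L(\tilde{k})$: the inclusion $\tilde{k}\subseteq L_2$ holds because $\tilde{k}/k^{(2)}$ is abelian, and $L_2/N_\p$ is Galois because $L_2$ is the fixed field of the commutator subgroup of $\Gal(L(\tilde{k})/k^{(2)})$, which is normal in $\Gal(L(\tilde{k})/k)$ since $k^{(2)}/k$ is a subextension of the abelian extension $\tilde{k}/k$. Set $G:=\Gal(L_2/N_\p)$ and $H:=\Gal(L_2/k^{(2)})$. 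Then $H$ is abelian and normal in $G$, and $G/H\isom\Gal(k^{(2)}/N_\p)\isom\Zp$, generated by the image of $x$. By Lemma \ref{G/H act on H} it therefore suffices to show that $G/H$ acts trivially on $H$ by conjugation.

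Next I would describe $H$ in terms of inertia. Since $X(k^{(2)})=0$ by Lemma \ref{X^2=0 in case non-Gal deg 4}, i.e. $L(k^{(2)})=k^{(2)}$, the extension $L_2/k^{(2)}$ has no nontrivial unramified subextension; as $L_2/k^{(2)}$ lies inside $L(\tilde{k})/k^{(2)}$, which is unramified outside $p$, this forces $H$ to be topologically generated by the inertia subgroups in $\Gal(L_2/k^{(2)})$ of the primes of $k^{(2)}$ above $p$. I would then determine which of $\p,\q,\overline{\p},\overline{\q}$ actually ramify: because $L(\tilde{k})/\tilde{k}$ is unramified, such a prime $\mathfrak{l}$ ramifies in $L_2/k^{(2)}$ only if $I_\mathfrak{l}\cap\Gal(\tilde{k}/k^{(2)})=I_\mathfrak{l}\cap\langle\gam\rangle\neq 1$. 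Using the explicit inertia generators of Lemma \ref{Z in case non-Gal deg 4} and the conditions $a\neq 0$, $a+b\neq 0$ in $\Zp$ from Remark \ref{a neq 0, b neq 0, a+b neq 0}(ii), one checks that $I_\q,I_{\overline{\p}},I_{\overline{\q}}$ meet $\langle\gam\rangle$ trivially, whereas $I_\p=\langle\gam\rangle$; hence $H$ is generated by the inertia subgroups $\tilde{I}$ of the primes of $k^{(2)}$ lying over $\p$ only.

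Finally I would show that $G$ fixes each such $\tilde{I}$ pointwise. The key point is the isomorphism $\tilde{I}\isom\Gal(\tilde{k}/k^{(2)})=\langle\gam\rangle$ given by restriction to $\tilde{k}$: on the one hand $\tilde{I}$ maps onto the inertia of $\p$ in $\tilde{k}/k^{(2)}$, which is all of $\langle\gam\rangle$ since $I_\p=\langle\gam\rangle$; on the other hand $\tilde{I}$ is procyclic, isomorphic to $\Zp$ (as $p$ splits completely in $k$ and $L(\tilde{k})/\tilde{k}$ is unramified), so a surjection $\Zp\twoheadrightarrow\Zp$ is an isomorphism. Moreover the decomposition field of $\p$ in $k^{(2)}/k$ is exactly $N_\p$ by Lemma \ref{Z in case non-Gal deg 4}, so there is a unique prime of $k^{(2)}$ above each prime of $N_\p$ over $\p$; hence conjugation by $G$ carries $\tilde{I}$ to itself as a subgroup. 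Since this conjugation action is compatible with the action of $G$ on the quotient $\Gal(\tilde{k}/k^{(2)})$, which is trivial because $\Gal(\tilde{k}/N_\p)$ is abelian, the isomorphism $\tilde{I}\isom\langle\gam\rangle$ upgrades this to a pointwise fixing of $\tilde{I}$. As the $\tilde{I}$'s generate $H$, the action of $G/H$ on $H$ is trivial, and Lemma \ref{G/H act on H} yields that $L_2/N_\p$ is abelian.

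The step I expect to be the main obstacle is the ramification/decomposition bookkeeping in the tower $N_\p\subseteq k^{(2)}\subseteq\tilde{k}$: pinning down that $\p$ is the only prime above $p$ ramifying in $L_2/k^{(2)}$, and keeping straight the distinction between an inertia subgroup being preserved setwise under $G$-conjugation and being fixed pointwise. Once the isomorphism $\tilde{I}\isom\Gal(\tilde{k}/k^{(2)})$ is established, the triviality of the action is formal.
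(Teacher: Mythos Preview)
Your proof is correct and follows essentially the same approach as the paper's: both use $X(k^{(2)})=0$ to write $H=\Gal(L_2/k^{(2)})$ as a sum of inertia groups $I_{\P^{(2)}}$ at primes above $\p$, observe that each such inertia group is stable under $\Gal(k^{(2)}/N_\p)$-conjugation because $\p$ is totally inert in $k^{(2)}/N_\p$, and then use the injection $I_{\P^{(2)}}\hookrightarrow\Gal(\tilde{k}/k^{(2)})$ (coming from $L_2/\tilde{k}$ being unramified) together with the abelianness of $\tilde{k}/N_\p$ to conclude the action is trivial. The paper is slightly more economical in that it uses only the injectivity of the restriction map (which suffices), whereas you also argue surjectivity to get the full isomorphism $\tilde{I}\isom\langle\gam\rangle$; and the paper simply asserts that $L_2/k^{(2)}$ is unramified outside $\p$, while you verify this explicitly by checking $I_\mathfrak{l}\cap\langle\gam\rangle$ for each $\mathfrak{l}$.
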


\begin{proof}
Our proof closely follows the arguments in \cite{Minardi}, \cite{Itoh2011}, and \cite{Fujii2015}.
For a prime $\P^{(2)}$ of $k^{(2)}$ lying above $\p$, let $\mathbb{I}_{\P^{(2)}}$ denote its inertia subgroup in $\Gal(L_2/k^{(2)})$.
By the definitions of $k^{(2)}$ and $N_\p$, any prime of $N_\p$ lying above $\p$ is totally inert in the extension $k^{(2)}/N_\p$.
Therefore, for each prime $\P^{(2)}$ of $k^{(2)}$ lying above $\p$, the inertia subgroup $\mathbb{I}_{\P^{(2)}}$ naturally has the structure of a $\Zp \llbracket \Gal(k^{(2)}/N_\p) \rrbracket$-module.
Since $X(k^{(2)})$ is trivial by Lemma \ref{X^2=0 in case non-Gal deg 4}, and since $L_2/k^{(2)}$ is unramified outside $\p$, we obtain the isomorphism
\begin{eqnarray}\label{MIF in case non-Gal deg 4}
\Gal(L_2/k^{(2)}) = \sum_{\P^{(2)} \mid \p} \mathbb{I}_{\P^{(2)}}
\end{eqnarray}
as a module over $\Zp \llbracket \Gal(k^{(2)}/N_\p) \rrbracket$.
We now claim that $\Gal(k^{(2)}/N_\p)$ acts trivially on $\mathbb{I}_{\P^{(2)}}$.
Since $L_2/\tilde{k}$ is unramified, it follows that 
$\mathbb{I}_{\P^{(2)}} \cap \Gal(L_2/\tilde{k})=1$.
This implies that the restriction map 
$\mathbb{I}_{\P^{(2)}} \to \Gal(\tilde{k}/k^{(2)})$ 
is injective.
Because $\tilde{k}/N_\p$ is abelian, 
the action of $\Gal(k^{(2)}/N_\p)$ on $\Gal(\tilde{k}/k^{(2)})$ is trivial by Lemma \ref{G/H act on H},
and consequently, its action on $\mathbb{I}_{\P^{(2)}}$ is also trivial.
From (\ref{MIF in case non-Gal deg 4}), it follows that $\Gal(k^{(2)}/N_\p)$ acts trivially on $\Gal(L_2/k^{(2)})$.
Hence, applying Lemma \ref{G/H act on H} once more, we conclude that $L_2/N_\p$ is abelian.
\end{proof}

By interchanging the roles of $\p$ and $\q$ with $\overline{\p}$ and $\overline{\q}$, respectively, in the above argument, 
we similarly deduce that $\overline{L_2}/N_{\overline{\p}}$ is abelian, where $\overline{L_2}$ denotes the maximal subfield of $L(\tilde{k})$ that is abelian over $\overline{k^{(2)}}$.
To prove that $X(\tilde{k})$ is trivial, it suffices to show that $X(\tilde{k})_{\Gal(\tilde{k}/k)}$ is trivial.
Let $\mathcal{C}_{\tilde{k}/k}$ denote the subfield of $L(\tilde{k})$ containing $\tilde{k}$ such that
$
\Gal(\mathcal{C}_{\tilde{k}/k}/\tilde{k}) \isom X(\tilde{k})_{\Gal(\tilde{k}/k)}.
$
Note that $k=N_\p \cap N_{\overline{\p}}$, since the subgroup corresponding to 
$N_\p \cap N_{\overline{\p}} \leftrightarrow 
\langle D_{\p}, D_{\overline{\p}} \rangle
=
\langle \gam,x,y^{a+b} \rangle,
$
and
$a+b \not\equiv 0 \pmod p$.
Combining this with the correspondence $N_\p N_{\overline{\p}} \leftrightarrow \langle x \rangle$, we obtain the inclusion
\begin{eqnarray}\label{k subset kcyc subset NN in case non-Gal deg 4}
k=N_\p \cap N_{\overline{\p}} \subset \kcyc \subset N_\p N_{\overline{\p}}
\end{eqnarray}
as illustrated in Figure~\ref{X = 0 galois diagram in case non-Gal deg 4}.

\begin{figure}[H]
$$ %$$挟みでなくてもxy-picは使えるが，この図を中央にする方法がわからない
%\hspace*{200pt}
\begin{xy}
(0,12) *{\text{$k_{{\rm cyc}}$}}="kcyc",
(0,0) *{\text{$k$}}="k",
(13,0) *{\text{$\leftrightarrow  \langle D_{\p},  D_{\overline{\p}} \rangle$}}="Z",
(-20,12) *{\text{$N_{\p}$}}="Np",
(-20,24) *{\text{$k^{(2)}$}}="k2",
(0,36) *{\text{$\tilde{k}$}}="tildek",
(0,45) *{\text{$\mathcal{C}_{\tilde{k}/k}$}}="central/k",
%(0,48) ="L2 cap oberlineL2",
(-15,50) *{\text{$L_2$}}="L2",
(20,12) *{\text{$N_{\overline{\p}}$}}="Np-conj",
(20,24) *{\text{$\overline{k^{(2)}}$}}="k2-conj",
(0,24) *{\text{$N_{\p}N_{\overline{\p}}$}}="NpNp-conj",
%
%(0,48) ="L2 cap oberlineL2",
%(0,48) *{\text{$L_2 \cap \overline{L}_2$}}="L2 cap oberlineL2",
(15,50) *{\text{$\overline{L_2}$}}="L2-conj",
\ar@{-} "k";"Np",
\ar@{-} "k2";"tildek",
\ar@{-} "k2";"Np",
\ar@{-} "tildek";"central/k",
\ar@{-} "central/k";"L2",
%\ar@{-} "central/k";"L2 cap oberlineL2",
%\ar@{-} "L2 cap oberlineL2";"L2",
%
\ar@{-} "k";"Np",
\ar@{-} "k2";"tildek",
\ar@{-} "k2";"Np",
\ar@{-} "tildek";"central/k",
\ar@{-} "central/k";"L2",
%\ar@{-} "central/k";"L2 cap oberlineL2",
%\ar@{-} "L2 cap oberlineL2";"L2",
%%
%%
\ar@{-} "k";"Np-conj",
\ar@{-} "k2-conj";"tildek",
\ar@{-} "k2-conj";"Np-conj",
\ar@{-} "Np";"NpNp-conj",
\ar@{-} "Np-conj";"NpNp-conj",
\ar@{-} "central/k";"L2-conj",
%\ar@{-} "L2 cap oberlineL2";"L2-conj",
\ar@{-} "NpNp-conj";"tildek",
\ar@{-} "k";"kcyc",
\ar@{-} "kcyc";"NpNp-conj",
\ar@/^10mm/ @{.} "Np";"L2"^{\rm abel}
\ar@/_10mm/ @{.} "Np-conj";"L2-conj"_{\rm abel}
\end{xy}
$$
\caption{}
\label{X = 0 galois diagram in case non-Gal deg 4}
\end{figure}

On the other hand, by considering the natural surjections
$\Gal(L_2/\tilde{k}) \isom X(\tilde{k})_{\Gal(\tilde{k}/k^{(2)})} \surj X(\tilde{k})_{\Gal(\tilde{k}/k)}$
and
$\Gal(\overline{L_2}/\tilde{k}) \isom X(\tilde{k})_{\Gal(\tilde{k}/\overline{k^{(2)}})} \surj X(\tilde{k})_{\Gal(\tilde{k}/k)}$,
we obtain the inclusions 
$$
\tilde{k} \subset \mathcal{C}_{\tilde{k}/k} \subset L_2 \cap \overline{L_2}.
$$
Since $\tilde{k}/\kcyc$ is unramified and $\tilde{k} \subset \mathcal{C}_{\tilde{k}/k} \subset L(\tilde{k})$, 
it follows that the (possibly non-abelian) $p$-extension $\mathcal{C}_{\tilde{k}/k}/\kcyc$ is also unramified.
We now claim that the extension $\mathcal{C}_{\tilde{k}/k}/\kcyc$ is abelian.
Indeed, since $\mathcal{C}_{\tilde{k}/k}/N_\p$ 
%and $\mathcal{C}_{\tilde{k}/k}/N_\overline{\p}$ are 
is abelian by Lemma \ref{L_2/N_p is abelian in case non-Gal deg 4}, 
%$\mathcal{C}_{\tilde{k}/k}/N_\p N_{\overline{\p}}$ is abelian.
the group $\Gal(N_\p N_{\overline{\p}}/N_\p) \isom \Zp$ acts trivially on $\Gal(\mathcal{C}_{\tilde{k}/k}/N_\p N_{\overline{\p}})$ by Lemma \ref{G/H act on H}.
Similarly, $\Gal(N_\p N_{\overline{\p}}/N_{\overline{\p}})$ also acts trivially on $\Gal(\mathcal{C}_{\tilde{k}/k}/N_\p N_{\overline{\p}})$.
Since every element of $\Gal(N_\p N_{\overline{\p}}/\kcyc)$ can be expressed as a product of elements of $\Gal(N_\p N_{\overline{\p}}/N_\p)$ and $\Gal(N_\p N_{\overline{\p}}/N_{\overline{\p}})$ by (\ref{k subset kcyc subset NN in case non-Gal deg 4}), 
it follows that the group $\Gal(N_\p N_{\overline{\p}}/\kcyc) \isom \Zp$ also acts trivially on $\Gal(\mathcal{C}_{\tilde{k}/k}/N_\p N_{\overline{\p}})$.
Therefore, applying Lemma \ref{G/H act on H} once again, we conclude that the extension $\mathcal{C}_{\tilde{k}/k}/\kcyc$ is abelian (and unramified).
Since $\tilde{k}=L(\kcyc)$, it follows that $\mathcal{C}_{\tilde{k}/k}=\tilde{k}$.
By the definition of $\mathcal{C}_{\tilde{k}/k}$, we deduce that $X(\tilde{k})_{\Gal(\tilde{k}/k)}$ is trivial, and hence, by Nakayama's lemma, $X(\tilde{k})$ itself is also trivial.
This completes the proof.

%%%%%%%%%%%%%%%%%%%%%%%%%%%%%%%%%%%%%%%%%%%%%%%%%%%%%%%%%%
%%%%%%%%%%%%%%%%%%%%%%%%%%%%%%%%%%%%%%%%%%%%%%%%%%%%%%%%%%

\subsection{The case of totally imaginary $(2,2)$-extensions}\label{case: (2,2) of degree 4}

In this subsection, let $k$ be a totally imaginary $(2,2)$-extension of $\Q$.
Suppose that $k$ satisfies conditions (I), (II), and (III) given in \S\ref{Narrowing-down target}.
Choose two generators $\sigma$ and $\tau$ of $\Gal(k/\Q)$ such that 
$J:=\sigma \tau$ is the complex conjugation.
Then the element $\sigma \tau$ acts on $X(\kcyc)=X(\kcyc)^-$ as inverse.
Fix a prime ideal $\p$ of $k$ lying above $p$, and define the other prime ideals $\overline{\p}$, $\q$, and $\overline{\q}$ of $k$ lying above $p$ by 
\begin{eqnarray}\label{prime decomp in case (2,2)}
\overline{\p}:=\sigma\tau(\p),
\ \ 
\q:=\sigma(\p),
\ \ 
\overline{\q}:=\tau(\p).
\end{eqnarray}
Here, the notation $\overline{\; \cdot\; }$ denotes the complex conjugate.
We denote by $D_{\p}$, $D_\q$, $D_{\overline{\p}}$, and $D_{\overline{\q}}$ the decomposition groups of 
$\p$, $\q$, $\overline{\p}$, and $\overline{\q}$ in $\Gal(\tilde{k}/k)$, respectively.
Similarly, we denote by $I_{\p}$, $I_\q$, $I_{\overline{\p}}$, and $I_{\overline{\q}}$ the inertia groups of $\p$, $\q$, $\overline{\p}$, and $\overline{\q}$ in $\Gal(\tilde{k}/k)$, respectively.
We now obtain a result similar to Lemma \ref{Z in case non-Gal deg 4}:

\begin{lem}\label{Z in case (2,2)}
Let $\gam \in \Gal(\tilde{k}/k)$ be a topological generator of $I_{\p}$.
Then there exist elements $x, y \in X(\kcyc)$ and $a,b \in \Zp$ such that 
$$
\begin{cases}
\Gal(\tilde{k}/k)=\langle \gam, x,y \rangle,
&
X(\kcyc)=\langle x,y \rangle,
\\
D_\p=\langle \gam \rangle \x \langle x \rangle,
&
D_\q=\langle \gam x^ay^{-a} \rangle \x \langle y \rangle,
\\
D_{\overline \p}=\langle \gam x^{a+b}y^{b-a} \rangle \x \langle x^{-1} \rangle,
&
D_{\overline \q}=\langle \gam x^b y^b \rangle \x \langle y^{-1} \rangle.
\end{cases}
$$
Here, the left factor of each direct product is the inertia group corresponding to the respective prime ideal.
\end{lem}

\begin{proof}
The proof proceeds in a manner similar to that of Lemma \ref{Z in case non-Gal deg 4}.
By Lemma \ref{A(k)=D(k)}, every prime ideal lying above $p$ is inert in the extension $\tilde{k}/\kcyc$ and
$
\Gal(\tilde{k}/k)=\langle D_\p, D_\q,D_{\overline{\p}}, D_{\overline{\q}} \rangle.
$
We take an element $x \in X(\kcyc)$ such that 
$D_{\p}=\langle \gam \rangle \x \langle x \rangle$, 
and define $y:=\sigma(x)$.
Note that $\tau(x)=y^{-1}$ since $\sigma(\tau(x))=x^{-1}$.
Applying the automorphisms $\sigma$, $\sigma \tau$, and $\tau$ to both $I_\p$ and $D_\p$, we obtain 
\begin{eqnarray}\label{decomp group in proof in the case (2,2)}
D_\q=\langle \sigma(\gam) \rangle \x \langle y \rangle,
\ \ 
D_{\overline \p}=\langle \sigma \tau(\gam) \rangle \x \langle x^{-1} \rangle,
\ \ 
D_{\overline \q}=\langle \tau(\gam) \rangle \x \langle y^{-1} \rangle
\end{eqnarray}
by (\ref{prime decomp in case (2,2)}).
Here, the left factor of each direct product is the inertia group corresponding to the respective prime ideal.
Similarly to the proof of Lemma \ref{Z in case non-Gal deg 4}, we have 
$$
X(\kcyc)=
\left\langle
D_{\p} \cap X(\kcyc),\; D_{\q} \cap X(\kcyc),\; D_{\overline \p} \cap X(\kcyc),\; D_{\overline \q} \cap X(\kcyc)
\right\rangle
=
\langle
x,y
\rangle,
$$
%
%Since $\Gal(k/\Q)$ acts on $\Gal(\kcyc/k)$ trivially, we get 
and 
$\sigma(\gam) \equiv \gam \pmod {X(\kcyc)}$.
Hence, there exist $a,b,c,d \in \Zp$ such that $\sigma(\gam)=\gam x^a y^c$ and $\tau(\gam)=\gam x^b y^d$.
Since both $\sigma^2$ and $\tau^2$ are the identity automorphisms, we have
$
%\begin{cases}
\gam=\sigma^2(\gam)=\gam x^{a+c} y^{a+c}
%\end{cases}
$ 
and
$
%\begin{cases}
\gam=\tau^2(\gam)=\gam x^{b-d} y^{d-b}
%\end{cases}
$,
which imply that 
$c=-a$ and $d=b$.
Therefore,
$$
\sigma(\gam)=\gam x^a y^{-a}, 
\ \ 
\tau(\gam)=\gam x^b y^b.
$$
Combining these expressions with (\ref{decomp group in proof in the case (2,2)}), we obtain the representations of the decomposition groups.
Finally, 
%again following the same method as in the proof of Lemma \ref{Z in case non-Gal deg 4}, 
we conclude that
$\Gal(\tilde{k}/k)=\langle \gam, x,y \rangle$.
\end{proof}

\begin{rem}\label{a neq 0, b neq 0, a neq b}
{\rm
For the same reason as in Remark \ref{a neq 0, b neq 0, a+b neq 0}, we have
$
a \neq 0,\ b \neq 0,\ a-b \neq 0.
$
}
\end{rem}

In a manner similar to \S \ref{case: non-Gal deg 4},
we can prove the main theorem for this case.
Table \ref{table deg4} lists the corresponding objects (all congruences are taken modulo $p$).

\begin{longtable}{|c|c|c|}%[H]
% \begin{center}
%  \begin{tabular}{|c|c|c|}
\hline
     & \S \ref{case: non-Gal deg 4} & \S \ref{case: (2,2) of degree 4} 
\\
\hline
\hline
    $A(k)=0$ \hfill $\iff$ & $a^2+b^2 \not\equiv 0$ & $a \not\equiv 0$, $b \not\equiv 0$ 
\\
    $A(k)$: cyclic \hfill $\iff$ & $a,b \not\equiv 0$, $a^2+b^2 \equiv 0$ & $ab \equiv 0$ and $a-b \not\equiv 0$
\\
    $\dim_{\Fp}A(k)/pA(k)=2$ \hfill $\iff$ & $a \equiv b \equiv 0$ & $a \equiv b \equiv 0$
\\
\hline
$\mathcal{K}(G)=0$ \hfill $\iff$ & $n:=\ord_p(a+b)$ is $0$ & $n:=\ord_p(a-b)$ is $0$
\\
\hline
$\ki$  & $\langle \gam x^a, xy^{-1} \rangle$ & $\langle \gam x^a, xy \rangle$
\\
\hline
$P_\infty$ & $D_\p=\langle \gam, x \rangle$ & $D_\p=\langle \gam, x \rangle$ 
\\
$Q_\infty$  & $D_\q=\langle \gam x^a, y \rangle$ & $D_\q=\langle \gam x^a, y \rangle$
\\
$P_n$ & $\langle D_\p, D_{\overline{\p}} \rangle =\langle \gam, x, y^{a+b} \rangle$ & $\langle D_\p, D_{\overline{\p}} \rangle =\langle \gam, x, y^{a-b} \rangle$
\\
$Q_n$ & $\langle D_\q, D_{\overline{\q}} \rangle =\langle \gam, x^{a+b}, y \rangle$ & $\langle D_\q, D_{\overline{\q}} \rangle =\langle \gam, x^{a-b}, y \rangle$
\\
\hline
$L'$ & $\langle \gam x^a, (xy^{-1})^{a+b} \rangle$ & $\langle \gam x^a, (xy)^{a-b} \rangle$
\\
\hline
$k^{(1)}$ & $\langle I_\p, I_\q \rangle =\langle \gam, x^ay^b \rangle$ & $\langle I_\p, I_\q \rangle =\langle \gam, x^ay^{-a} \rangle$
\\
$k^{(2)}$ and $N_\p$ & $I_\p=\langle \gam \rangle$ and $D_\p=\langle \gam, x \rangle$ & $I_\p=\langle \gam \rangle$ and $D_\p=\langle \gam, x \rangle$
\\
\hline
The goal: $X(\tilde{k})=0$ \hfill $\iff$  & $a+b \not\equiv 0$ & $a-b \not\equiv 0$
\\
\hline
%  \end{tabular}
\caption{Summary of relations in \S \ref{case: non-Gal deg 4} and \S \ref{case: (2,2) of degree 4} .}
\label{table deg4}
% \end{center}
\end{longtable}

%%%%%%%%%%%%%%%%%%%%%%%%%%%%%%%%%%%%%%%%%%%%%%%%%%%%%%%%%%
%%%%%%%%%%%%%%%%%%%%%%%%%%%%%%%%%%%%%%%%%%%%%%%%%%%%%%%%%%
%%%%%%%%%%%%%%%%%%%%%%%%%%%%%%%%%%%%%%%%%%%%%%%%%%%%%%%%%%

\section{The case of degree $6$}
\label{section deg 6}
%%%%%%%%%%%%%%%%%%%%%%%%%%%%%%%%%%%%%%%%%%%%%%%%%%%%%%%%%%
%%%%%%%%%%%%%%%%%%%%%%%%%%%%%%%%%%%%%%%%%%%%%%%%%%%%%%%%%%

\subsection{Setting and preparations}\label{case: non-Gal deg 6}

Suppose that $k$ is a CM-field of degree $6$
satisfying conditions (I), (II), and (III) given in \S\ref{Narrowing-down target}.
Let $F/\Q$ denote the Galois closure of $k/\Q$, 
and let $\Delta:=\Gal(F/\Q)$ be its Galois group.
The following classification of the group structure of $\Delta$ is known (see, for example, Dodson \cite[Theorem in \S 5.1.2]{Dodson1984} or Bouw et al. \cite[Proposition 2.1]{Bouw et al.2015}).

\begin{lem}\label{Gal of deg 6}
Let $k$ denote a CM-field of degree $6$, and let $\Delta$ denote the Galois group of the Galois closure of $k/\Q$.
Then $\Delta$ is isomorphic to one of the following groups:
\begin{itemize}
\setlength{\parskip}{0pt} % 段落間
\setlength{\itemsep}{0pt} % 項目間
\item[{\rm (i)}]
$\Z/6\Z =\{ \sigma \,|\, \sigma^6=1 \}$,
\item[{\rm (ii)}]
$
D_{12}
=\{ \sigma,\tau \,|\, \sigma^6=\tau^2=1,\ \tau \sigma \tau^{-1}=\sigma^{-1} \}
$,
\item[{\rm (iii)}]
$\displaystyle{
(\Z/2\Z)^3 \rtimes \Z/3\Z
=\left\{ 
a,b,c,x \,\Big|\, 
\begin{array}{l}
a^2=b^2=c^2=x^3=1,\ ab=ba,\ bc=cb,\ ca=ac,
\\
ax=xc,\ bx=xa,\ cx=xb
\end{array}
\right\}
}$,
\item[{\rm (iv)}]
$\displaystyle{
(\Z/2\Z)^3 \rtimes \mathcal{S}_3
=
\left\{ 
a,b,c,x,y \,\Big|\, 
\begin{array}{l}
a^2=b^2=c^2=x^3=y^2=1,\ ab=ba,\ bc=cb,\ ca=ac,
\\
ax=xc,\ bx=xa,\ cx=xb,\ 
ya = by,\ yb = ay,\ yc = cy,
\\
yx = x^2 y
\end{array}
\right\}
}$.
\end{itemize}
Here, $\mathcal{S}_3$ denotes the symmetric group of degree $3$, 
and $\rtimes$ denotes the semi-direct product of groups.
Thus, $\Z/3\Z$ in {\rm (iii)} and $\mathcal{S}_3$ in {\rm (iv)} act by permutations on the three copies of $\Z/2\Z$.
In particular, if $k/\Q$ is Galois, then the Galois group $\Delta$ is cyclic of order $6$.
\end{lem}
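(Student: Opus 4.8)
The plan is to reduce the group-theory to the transitive action on the six embeddings, using that the Galois closure is again a CM-field. Write $k=k^+(\sqrt{\alpha})$ with $k^+$ the maximal totally real subfield (of degree $3$) and $\alpha\in k^+$ totally negative, let $F$ be the Galois closure of $k/\Q$, and put $\Delta=\Gal(F/\Q)$. By \cite[Corollary 1.5]{MilneCM06} the field $F$ is again CM; hence complex conjugation $c$ is a well-defined \emph{central} element of $\Delta$, and on the set $\Hom(k,\overline{\Q})$ of six embeddings it acts as a fixed-point-free involution (a product of three transpositions, pairing each embedding with its complex conjugate). Since $k$ is a field of degree $6$, $\Delta$ acts faithfully and transitively on these six embeddings, so $\Delta$ is a transitive subgroup of the centralizer $C_{\mathcal{S}_6}(c)$, which is the wreath product $(\Z/2\Z)\wr\mathcal{S}_3\isom(\Z/2\Z)^3\rtimes\mathcal{S}_3$ of order $48$; here $c$ is the ``diagonal'' element $(1,1,1)$ of the base group $(\Z/2\Z)^3$ (because each $\sqrt{\alpha_i}$ is purely imaginary, $c$ flips the sign in every one of the three pairs).

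Next I would analyse the projection $\pi\colon(\Z/2\Z)\wr\mathcal{S}_3\surj\mathcal{S}_3$ onto the group permuting the three complex-conjugate pairs. Its restriction to $\Delta$ is the restriction map to $\Gal(L/\Q)$, where $L$ is the Galois closure of $k^+/\Q$; since $k^+$ is a field of degree $3$, this image is a transitive subgroup of $\mathcal{S}_3$ on three points, hence $\Z/3\Z$ or $\mathcal{S}_3$. The kernel $\Delta\cap(\Z/2\Z)^3$ is an $\F_2[\pi(\Delta)]$-submodule of the permutation module $(\Z/2\Z)^3$ containing $c=(1,1,1)$. A short computation (valid for $\mathcal{S}_3$ and for its subgroup $\Z/3\Z$) shows that the only submodules of this permutation module are $0$, $\langle(1,1,1)\rangle$, the augmentation submodule $\{v:\sum v_i=0\}$, and the whole module, of which only $\langle(1,1,1)\rangle$ and $(\Z/2\Z)^3$ contain $(1,1,1)$. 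So there are exactly four cases, indexed by the pair $\bigl(\pi(\Delta),\,\Delta\cap(\Z/2\Z)^3\bigr)$.

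Then I would identify $\Delta$ in each case. If $\Delta\cap(\Z/2\Z)^3=(\Z/2\Z)^3$: with $\pi(\Delta)=\mathcal{S}_3$ we get $\Delta=(\Z/2\Z)\wr\mathcal{S}_3$, which is group (iv); with $\pi(\Delta)=\Z/3\Z$ we have $|\Delta|=24$ and, since $\gcd(8,3)=1$, Schur--Zassenhaus splits the extension, giving $\Delta\isom(\Z/2\Z)^3\rtimes\Z/3\Z$ with $\Z/3\Z$ acting by a $3$-cycle on the coordinates, i.e. group (iii). If $\Delta\cap(\Z/2\Z)^3=\langle c\rangle$ and $\pi(\Delta)=\Z/3\Z$, then $\Delta$ is a central extension of $\Z/3\Z$ by $\Z/2\Z$, hence abelian of order $6$, so $\Delta\isom\Z/6\Z$ (group (i)); in that case $[F:\Q]=6=[k:\Q]$ forces $F=k$, so $k/\Q$ is Galois. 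Finally, if $\Delta\cap(\Z/2\Z)^3=\langle c\rangle$ and $\pi(\Delta)=\mathcal{S}_3$, then $\Delta$ is a central extension of $\mathcal{S}_3$ by $\Z/2\Z$ of order $12$, a priori $D_{12}\isom\Z/2\Z\times\mathcal{S}_3$ or the dicyclic group $\mathrm{Dic}_3$ of order $12$.

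The step I expect to be the real obstacle is excluding $\mathrm{Dic}_3$ (and, more generally, being sure no ``spurious'' group slips through); this is where the faithful transitive action is used again. The group $\mathrm{Dic}_3$ has a \emph{unique} involution, namely its central element, which here would be $c$; but the point stabilizers of the action on the six embeddings have order $12/6=2$, so every point stabilizer would equal $\langle c\rangle$ — impossible, since $c$ is fixed-point-free. Hence $\Delta\isom D_{12}$, which is group (ii). (The same principle already shows that groups with trivial centre, such as $\mathcal{S}_4$ or $A_4$, cannot occur, so nothing is lost by listing only the four groups.) The last assertion follows as noted: if $k/\Q$ is Galois then $[F:\Q]=[k:\Q]=6$, so $|\Delta|=6$, and a group of order $6$ with a central involution is cyclic. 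Apart from the submodule bookkeeping and the verification that $c$ is the central diagonal element, everything else is routine once $F$ is known to be CM.
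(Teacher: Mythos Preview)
Your argument is correct. The paper does not prove this lemma at all: it simply quotes the result from the literature, pointing to Dodson \cite[Theorem in \S 5.1.2]{Dodson1984} and Bouw et al.\ \cite[Proposition 2.1]{Bouw et al.2015}. By contrast, you give a self-contained proof: use that the Galois closure $F$ is again CM so that complex conjugation $c$ is a \emph{central} fixed-point-free involution in $\Delta\hookrightarrow\mathcal S_6$, embed $\Delta$ into the centralizer $(\Z/2\Z)\wr\mathcal S_3$, and then classify the possibilities by the pair $(\pi(\Delta),\Delta\cap(\Z/2\Z)^3)$, using the $\F_2[\pi(\Delta)]$-submodule structure of the base group and the constraint $c\in\Delta$. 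The elimination of $\mathrm{Dic}_3$ via the observation that its unique involution would have to be $c$, contradicting fixed-point-freeness on the six embeddings, is the one genuinely non-formal step, and you handle it cleanly. The payoff of your approach is that it makes transparent exactly how the CM hypothesis (centrality and fixed-point-freeness of $c$) cuts the list of transitive subgroups of $\mathcal S_6$ down to four; the paper's approach buys brevity, since for its purposes only the existence of an order-$6$ element $\sigma$ with $\sigma^3$ central (used immediately after the lemma) is needed, and that is easy to read off from the cited classifications.
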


We identify $\Del$ with each group appearing in Lemma \ref{Gal of deg 6}.
In cases (iii) and (iv), set
$$
\sigma:=abcx.
$$
Then, in each of the cases from {\rm (ii)} to {\rm (iv)}, we verify that $\sigma$ has order $6$ and that $\langle \sigma^3 \rangle$ is the center of $\Gal(F/\Q)$.
By \cite[Corollary 1.5]{MilneCM06}, the field $F$ is also a CM-field.
This implies that $F^+/\Q$ is Galois, and hence $\Gal(F/F^+)=\langle \sigma^3 \rangle$,
i.e., $\sigma^3$ is the complex conjugation.
Fix a prime ideal $\P_1$ of $F$ lying above $p$.
Since $p$ splits completely in $k/\Q$, it also splits completely in $F/\Q$.
Therefore, the prime ideals $g(\P_1)$ for $g \in \Delta$ are distinct.
Define 
\begin{eqnarray}\label{def of primes in F in case non-Gal deg 6}
&&
\begin{array}{lll}
&
\overline{\P_2}:=\sigma(\P_1), 
&
\P_3:=\sigma^2(\P_1),
\\[3pt]
\overline{\P_1}:=\sigma^3(\P_1), 
&
\P_2:=\sigma^4(\P_1),
&
\overline{\P_3}:=\sigma^5(\P_1).
\end{array}
\end{eqnarray}
Then, for $i=1,2,3$, each $\overline{\P_i}$ is the complex conjugate of $\P_i$.
Define the prime ideals of $k$ lying above $p$ by 
$$
\p_i:=N_{F/k}\P_i, 
\ \ 
\overline{\p_i}:=N_{F/k} \overline{\P_i}
\ \ 
(i=1,2,3).
$$

\begin{lem}
The six prime ideals $\p_i$ and $\overline{\p_i}$ {\rm ($i=1,2,3$)} in $k$ are distinct.
\end{lem}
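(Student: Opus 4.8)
The plan is to mimic the degree $4$ argument in Lemma \ref{primes are distinct in the case deg 4}. The six prime ideals $\p_1,\overline{\p_1},\p_2,\overline{\p_2},\p_3,\overline{\p_3}$ are the norms $N_{F/k}$ of the six prime ideals $g(\P_1)$ ($g\in\langle\sigma\rangle$), so $\p_i=\overline{\p_j}$ (or $\p_i=\p_j$, $\overline{\p_i}=\overline{\p_j}$) would force, by the uniqueness of prime factorization of $N_{\Gal(F/k)}(\P_1)=\prod_{h\in\Gal(F/k)}h(\P_1)$, that a certain power $\sigma^m$ (with $1\le m\le 5$) lies in the coset $\Gal(F/k)\cdot\Gal(F/k)$; more precisely, from $N_{\Gal(F/k)}(\sigma^i\P_1)=N_{\Gal(F/k)}(\sigma^j\P_1)$ one gets $\sigma^{i}\Gal(F/k)=\sigma^{j}\Gal(F/k)$ as subsets of $\Del$, i.e. $\sigma^{j-i}\in\Gal(F/k)$. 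So the whole claim reduces to: $\Gal(F/k)$ contains no nontrivial power of $\sigma$, equivalently $\langle\sigma\rangle\cap\Gal(F/k)=1$.

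First I would recall that $[\Del:\Gal(F/k)]=[k:\Q]=6$ and $\langle\sigma\rangle$ has order $6$, so $\langle\sigma\rangle\cap\Gal(F/k)=1$ is equivalent to $\Del=\langle\sigma\rangle\Gal(F/k)$, and also to $\Gal(F/k)$ being a complement to $\langle\sigma\rangle$. In cases (i) and (ii) this is immediate: in (i) $\Gal(F/k)=1$, and in (ii) $D_{12}$ the index-$6$ subgroups have order $2$; if such an order-$2$ subgroup met $\langle\sigma\rangle$ nontrivially it would equal $\langle\sigma^3\rangle=\Gal(F/F^+)$, forcing $k$ totally real — contradiction, since $k$ is a CM-field. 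Next I would handle cases (iii) and (iv). Here $\Gal(F/k)$ has order $4$ in case (iii) (index $6$ in a group of order $24$) and order $8$ in case (iv) (index $6$ in a group of order $48$). Again, $\langle\sigma\rangle\cap\Gal(F/k)$ is a subgroup of the cyclic group $\langle\sigma\rangle\cong\Z/6\Z$, so it is $1$, $\langle\sigma^3\rangle$, $\langle\sigma^2\rangle$, or all of $\langle\sigma\rangle$. It cannot be $\langle\sigma^3\rangle$ (that is $\Gal(F/F^+)$, which would make $k$ totally real), and it cannot contain $\sigma^2$ or $\sigma$ because then $\Gal(F/k)$ would contain an element of order $3$, forcing $3\mid[\Del:\Gal(F/k)]^{-1}\cdot|\Del|$ in a way incompatible with $|\Gal(F/k)|\in\{4,8\}$ (neither $4$ nor $8$ is divisible by $3$). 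Hence $\langle\sigma\rangle\cap\Gal(F/k)=1$ in all four cases.

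Therefore in every case $\sigma^{j-i}\notin\Gal(F/k)$ for $1\le j-i\le 5$, so the six ideals $N_{F/k}(\sigma^i\P_1)$, $i=0,\dots,5$, are pairwise distinct; rewriting them via \eqref{def of primes in F in case non-Gal deg 6} as $\p_1,\overline{\p_2},\p_3,\overline{\p_1},\p_2,\overline{\p_3}$ gives the claim. The only mildly delicate point — and the one place I would be careful — is the reduction step: one must check that $N_{\Gal(F/k)}(\sigma^i\P_1)=N_{\Gal(F/k)}(\sigma^j\P_1)$ really does force equality of the cosets $\sigma^i\Gal(F/k)=\sigma^j\Gal(F/k)$ and not merely some weaker divisibility relation. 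This follows because the $g(\P_1)$, $g\in\Del$, are genuinely distinct primes of $F$ (as $p$ splits completely in $F/\Q$), so the multiset $\{h\sigma^i\P_1 : h\in\Gal(F/k)\}$ of prime factors of the left side determines the coset $\sigma^i\Gal(F/k)$ uniquely. Everything else is the same purely group-theoretic bookkeeping used in the degree $4$ case.
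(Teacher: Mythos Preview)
Your proof is correct and follows essentially the same line as the paper's: both reduce the question to showing $\sigma^m\notin\Gal(F/k)$ for $1\le m\le 5$, exclude $m=3$ via the CM/totally-real contradiction, and exclude the remaining $m$ by producing an order-$3$ element in $\Gal(F/k)$ and deriving a contradiction. The only cosmetic difference is that you invoke Lagrange directly ($3\nmid|\Gal(F/k)|\in\{2,4,8\}$), whereas the paper phrases the same obstruction as ``the fixed field of $\langle\rho\rangle$ has $2$-power degree over $\Q$ yet contains $k$'', and you separate case~(ii) out while the paper treats (ii)--(iv) uniformly. One tiny slip: the cosets arising from $N_{\Gal(F/k)}(\sigma^i\P_1)=\prod_{h\in\Gal(F/k)}(h\sigma^i)(\P_1)$ are the \emph{right} cosets $\Gal(F/k)\sigma^i$, not the left cosets you wrote; this is harmless here since the conclusion $\sigma^{i-j}\in\Gal(F/k)$ is the same.
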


\begin{proof}
The proof is similar to that of Lemma \ref{primes are distinct in the case deg 4}.
In the case $\Delta=\Z/6\Z$, there is nothing to prove, 
so we consider the remaining cases.
We prove only that $\p_1$ is distinct from the other prime ideals.
Suppose that $\p_1=\overline{\p_1}$.
Then 
$N_{\Gal(F/k)}(\P_1)= N_{\Gal(F/k)} \circ \sigma^3(\P_1)$.
By the uniqueness of the prime ideal factorization of $N_{\Gal(F/k)}(\P_1)$, it follows that $\sigma^3 \in \Gal(F/k)$, which implies that $k$ is totally real.
This is a contradiction.
Assume that $\p_1$ coincides with a prime ideal other than $\overline{\p_1}$.
Then, similarly, there exists $1 \le j \le 5$ with $j \neq 3$ such that $\sigma^j \in \Gal(F/k)$.
This implies that $\Gal(F/k)$ contains an element $\rho$ of order $3$.
Hence, the fixed field of $\langle \rho \rangle$ contains $k$ and has degree a power of $2$ over $\Q$.
This contradicts the assumptions.
\end{proof}

For each $i=1,2,3$, we denote by $\D{i}$ and $\Do{i}$ the decomposition groups of 
$\p_i$ and $\overline{\p_i}$ in $\Gal(\tilde{k}/k)$, respectively.
Similarly, we denote by $\I{i}$ and $\Io{i}$ the inertia groups of $\p_i$ and $\overline{\p_i}$ in $\Gal(\tilde{k}/k)$, respectively.
We now state a lemma analogous to Lemma \ref{complex embed in case non-Gal deg 4}:

\begin{lem}\label{complex embed in case deg 6}
With the above notation and assumptions, the restriction $\sigma |_k \in \Hom(k,F)$ of $\sigma$ to $k$ acts on $\Gal(\tilde{k}/k)$ and on $X(\kcyc)$ via inner automorphisms.
Moreover,
$$
\begin{array}{ccc}
&
\sigma|_k (\D{1})=\Do{2},
&
(\sigma|_k)^2 (\D{1})=\D{3},
\\[3pt]
(\sigma|_k)^3 (\D{1})=\Do{1},
&
(\sigma|_k)^4 (\D{1})=\D{2},
&
(\sigma|_k)^5 (\D{1})=\Do{3}.
\end{array}
$$
\end{lem}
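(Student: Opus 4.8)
The plan is to mimic the proof of Lemma \ref{complex embed in case non-Gal deg 4}, now handling six primes above $p$ in place of four. First I would fix a lift $\tilde\sigma \in \Gal(\tilde F/\Q)$ of $\sigma$, which exists since $\tilde F/\Q$ is Galois; conjugation by $\tilde\sigma$ stabilises the abelian normal subgroup $\Gal(\tilde F/F)$, and because $\Gal(\tilde F/F)$ acts trivially on itself the resulting action of $\sigma$ on $\Gal(\tilde F/F)$ is independent of the lift and factors through $\Del$. By the definition (\ref{def of primes in F in case non-Gal deg 6}) of the prime ideals, this action carries the decomposition group (in $\Gal(\tilde F/F)$) of $\P_1$ cyclically to those of $\overline{\P_2}$, $\P_3$, $\overline{\P_1}$, $\P_2$, $\overline{\P_3}$, and back to that of $\P_1$.

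Next I would transport this to $\Gal(\tilde k/k)$. Since $p$ splits completely in $k$, it splits completely in the Galois closure $F/\Q$, hence in $F$ over every intermediate field; combined with the inclusion $\tilde k \subseteq \tilde F$ (the compositum of all $\Zp$-extensions of $k$ lies in $\tilde F$), this shows --- exactly as in Lemma \ref{complex embed in case non-Gal deg 4} --- that for each prime $\P$ of $\tilde F$ above $p$ the canonical projection $\Gal(\tilde F/k) \to \Gal(\tilde k/k)$ restricts to an isomorphism $\pi_\P\colon D_\P \to D_\p$, where $D_\P$ is the decomposition group in $\Gal(\tilde F/F)$, $\p = \P \cap k$, and $D_\p$ is the decomposition group in $\Gal(\tilde k/k)$. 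Conjugating these isomorphisms by $\tilde\sigma$, I would set $\sigma|_k := \pi_{\tilde\sigma\P} \circ \sigma \circ \pi_\P^{-1}$ on each of $\D{1}, \Do{1}, \D{2}, \Do{2}, \D{3}, \Do{3}$; these partial isomorphisms agree on the intersections of their domains. Since $A(k)=D(k)$ forces $\Gal(\tilde k/k) = \langle \D{1}, \Do{1}, \D{2}, \Do{2}, \D{3}, \Do{3} \rangle$ --- the quotient by that subgroup would be the Galois group of an unramified abelian $p$-extension of $k$ in which every prime over $p$ splits completely, hence a quotient of $A(k)/D(k)=0$ --- the partial maps patch to one automorphism $\sigma|_k$ of $\Gal(\tilde k/k)$. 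Tracking where $\D{1}$ is sent by the successive iterates then gives the displayed identities, consistently with $(\sigma|_k)^6$ being the identity automorphism (because $\sigma^6 = 1$ in $\Del$) and with $(\sigma|_k)^3$ being the automorphism induced by the complex conjugation (recall $\sigma^3$ is the complex conjugation).

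Finally, via the canonical isomorphism $\Gal(F_{\rm cyc}/F) \isom \Gal(\kcyc/k)$ the automorphism $\sigma|_k$ acts trivially on $\Gal(\kcyc/k)$; since $L(\kcyc)=\tilde k$ under the standing hypotheses of this subsection, $\sigma|_k$ therefore preserves $X(\kcyc) = \Ker(\Gal(\tilde k/k)\to \Gal(\kcyc/k))$, and lifting it to $\Gal(L(\tilde k)/k)$ one gets the inner action on $X(\tilde k)$, just as in Lemma \ref{complex embed in case non-Gal deg 4}.

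I expect the one point needing care --- bookkeeping rather than a true obstacle, since the degree-$4$ argument already settles the analogous statement --- to be the verification that $\pi_\P$ is an isomorphism: injectivity uses that $p$ splits completely in $F$ over $F \cap \tilde k$, so that $D_\P$ meets the kernel $\Gal(\tilde F/\tilde k)$ of $\Gal(\tilde F/k)\to\Gal(\tilde k/k)$ trivially, and one must also check that the six transported partial maps are mutually compatible on overlaps. All the group-theoretic input --- Lemma \ref{Gal of deg 6}, the choice $\sigma=abcx$, and the fact that $\sigma^3$ is the complex conjugation --- is already available.
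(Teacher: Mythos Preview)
Your proposal is correct and follows essentially the same route as the paper's own proof: both arguments transport the $\sigma$-action on $\Gal(\tilde F/F)$ through the local isomorphisms $\pi_{\P}\colon D_{\P}\to D_{\p}$ (valid because $p$ splits completely in $F/\Q$), patch the six partial maps using $\Gal(\tilde k/k)=\langle \D{1},\Do{1},\D{2},\Do{2},\D{3},\Do{3}\rangle$ from $A(k)=D(k)$, and then read off the cyclic permutation of the decomposition groups from the definition~(\ref{def of primes in F in case non-Gal deg 6}). Your write-up is slightly more explicit about why $\pi_{\P}$ is an isomorphism and about the induced action on $X(\tilde k)$, but there is no substantive difference.
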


\begin{proof}
Note that $\Gal(\tilde{F}/F)$ is endowed with the action of $\sigma$.
Let $D_{\P_i}$ and $D_{\overline{\P_i}}$ denote the decomposition groups of $\P_i$ and $\overline{\P_i}$ in $\Gal(\tilde{F}/F)$, respectively, for $i=1,2,3$.
By the definition of these prime ideals (see \ref{def of primes in F in case non-Gal deg 6}), we have 
$$
\begin{array}{ccc}
&
\sigma (D_{\P_1})=D_{\overline{\P_2}},
&
\sigma^2 (D_{\P_1})=D_{\P_3},
\\[3pt]
\sigma^3 (D_{\P_1})=D_{\overline{\P_1}},
&
\sigma^4 (D_{\P_1})=D_{\P_2},
&
\sigma^5 (D_{\P_1})=D_{\overline{\P_3}}.
\end{array}
$$
Since $p$ splits completely in $F/\Q$, all the canonical projections
$\pi_{\P_i} \colon D_{\P_i} \to \D{i}$ and
$\pi_{\overline{\P_i}} \colon D_{\overline{\P_i}} \to \Do{i}$
for $i=1,2,3$ are isomorphisms.
Hence,
$$
\sigma|_k (\D{1})=\Do{2},
$$ 
and similar relations follow.
For example, the map $\sigma|_k \colon \D{1} \to \Do{2}$
can be expressed as 
$\sigma|_k=\pi_{\overline{\P_2}} \circ \sigma \circ \pi_{\P_1}$.
Note that these maps agree on the intersection of their domains.
Since 
\begin{eqnarray}\label{Gal(tilde{k}/k) is generated by these six decomposition groups}
\Gal(\tilde{k}/k)=
\langle \D{1}, \D{2}, \D{3}, \Do{1}, \Do{2}, \Do{3} \rangle
\end{eqnarray}
by the condition $A(k)=D(k)$ in Lemma  \ref{A(k)=D(k)},
it follows that $\sigma$ acts on $\Gal(\tilde{k}/k)$.
Via the canonical isomorphism $\Gal(F_{\rm cyc}/F) \isom \Gal(\kcyc/k)$, 
we see that $\sigma|_k$ acts on $\Gal(\kcyc/k)$
and that this action is trivial.
Therefore, $\sigma|_k$ also acts on $X(\kcyc)=\Ker(\Gal(\tilde{k}/k) \to \Gal(\kcyc/k))$.
This completes the proof.
\end{proof}

We now obtain a result similar to those in Lemmas \ref{Z in case non-Gal deg 4} and \ref{Z in case (2,2)}:

\begin{lem}\label{Z in case deg 6}
Let $\gam \in \Gal(\tilde{k}/k)$ be a topological generator of $\I{1}$.
Then there exist elements $x, y, z \in X(\kcyc)$ and $a,b,c \in \Zp$ such that 
$$
\begin{cases}
\Gal(\tilde{k}/k)=\langle \gam, x,y,z \rangle,
&
X(\kcyc)=\langle x,y,z \rangle,
\\
\D{1}=\langle \gam \rangle \x \langle x \rangle,
&
\Do{1}=\langle \gam x^{a-b-c}y^{a+b-c}z^{a+b+c} \rangle \x \langle x^{-1} \rangle,
\\
\D{2}=\langle \gam x^{-b-c}y^{a-c}z^{a+b} \rangle \x \langle y^{-1} \rangle,
&
\Do{2}=\langle \gam x^{a}y^{b}z^{c} \rangle \x \langle y \rangle,
\\
\D{3}=\langle \gam x^{a-c}y^{a+b}z^{b+c} \rangle \x \langle z \rangle,
&
\Do{3}=\langle \gam x^{-b}y^{-c}z^{a} \rangle \x \langle z^{-1} \rangle,
\end{cases}
$$
Here, the left factor of each direct product is the inertia group corresponding to the respective prime ideal.
\end{lem}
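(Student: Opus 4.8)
The plan is to mimic the proofs of Lemmas~\ref{Z in case non-Gal deg 4} and~\ref{Z in case (2,2)}, using the Galois action coming from $\sigma$ (via $\sigma|_k$) in place of the complex conjugation and the generator $\sigma$ of the degree-$4$ cases. First I would recall, exactly as in the degree-$4$ situation, that the conditions (III) force $X'(\kcyc)=0$: indeed Corollary~\ref{cor cyclotomic case} gives $X'(\kcyc)\isom\bigl(X(\kcyc)/X(\kcyc)^{\Gal(\kcyc/k)}\bigr)^-$, and the hypothesis $X(\kcyc)\isom\Zp^{\op 3}=\Gal(\tilde k/\kcyc)$ together with $X(\kcyc)^+=0$ (noted in \S\ref{Narrowing-down target}) makes the right-hand side vanish. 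Hence every prime above $p$ is inert in $\tilde k/\kcyc$, and $X(\kcyc)$ is generated by $\langle \D{i}\cap X(\kcyc),\ \Do{i}\cap X(\kcyc)\rangle$.

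Next I would fix a topological generator $\gam$ of $\I{1}$, choose $x\in X(\kcyc)$ with $\D1=\langle\gam\rangle\x\langle x\rangle$, and set
$$
y:=(\sigma|_k)^4(x),\qquad z:=(\sigma|_k)^2(x),
$$
so that, chasing the orbit of $\P_1$ under powers of $\sigma$ as recorded in (\ref{def of primes in F in case non-Gal deg 6}) and Lemma~\ref{complex embed in case deg 6}, the six inertia subgroups become $\langle x\rangle$, $\langle x^{-1}\rangle$, $\langle y^{-1}\rangle$, $\langle y\rangle$, $\langle z\rangle$, $\langle z^{-1}\rangle$ respectively (here one uses that $(\sigma|_k)^3$ is complex conjugation, acting as $-1$ on $X(\kcyc)=X(\kcyc)^-$, to pin down the signs; the pairing $\D i\leftrightarrow\Do i$ of complex-conjugate primes then forces the $\langle x^{\pm1}\rangle$, $\langle y^{\pm1}\rangle$, $\langle z^{\pm1}\rangle$ structure). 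Since $X'(\kcyc)=0$ we get $X(\kcyc)=\langle x,y,z\rangle$, and since $\sigma|_k$ acts trivially on $\Gal(\kcyc/k)$ we have $\sigma|_k(\gam)\equiv\gam$ modulo $X(\kcyc)$, so there are $a,b,c\in\Zp$ with
$$
\sigma|_k(\gam)=\gam x^{a}y^{b}z^{c}.
$$
This is the single free choice; applying $\sigma|_k$ repeatedly and reading off $\D{2}=\Do{\,\overline{2}}$'s inertia-complement as $(\sigma|_k)^4(\gam)$ etc. produces the displayed exponents. Finally $A(k)=D(k)$ gives $\Gal(\tilde k/k)=\langle \D1,\D2,\D3,\Do1,\Do2,\Do3\rangle=\langle\gam,x,y,z\rangle$.

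The only genuinely computational step — and the one I expect to be the main obstacle — is verifying that the six iterated expressions $(\sigma|_k)^j(\gam)$ simplify to exactly the six products of $x,y,z$ with the exponents $a-b-c,\ a+b-c,\ a+b+c$, etc. listed in the statement. This requires expanding $(\sigma|_k)^j(\gam x^{a}y^{b}z^{c})$ and keeping careful track of how $\sigma|_k$ permutes $\{x^{\pm1},y^{\pm1},z^{\pm1}\}$ (a cyclic permutation of order $6$ composed with inversion on the ``bar'' half), then matching against each prime's complex-conjugate partner. One subtlety worth flagging is consistency: the various partially-defined maps $\sigma|_k\colon \D i\to\Do{\,\cdot\,}$ must agree on overlaps (as in Lemma~\ref{complex embed in case non-Gal deg 4}), which is what legitimizes writing a single action of $\sigma$ on $\Gal(\tilde k/k)$; I would invoke Lemma~\ref{complex embed in case deg 6} for this and then the verification is purely a matter of bookkeeping in the free pro-$p$ group on $\gam,x,y,z$ modulo commutators of the relevant length. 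I would relegate that matrix/exponent calculation to a short displayed computation rather than spelling out every term.
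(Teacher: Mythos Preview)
Your approach is exactly the paper's: use Corollary~\ref{cor cyclotomic case} and condition~(iv) to get $X'(\kcyc)=0$, pick $x$ so that $\D{1}=\langle\gam\rangle\times\langle x\rangle$, transport $x$ by powers of $\sigma|_k$ to obtain $y,z$, write $\sigma|_k(\gam)=\gam x^{a}y^{b}z^{c}$, and iterate. The invocation of Lemma~\ref{complex embed in case deg 6} and the final use of $A(k)=D(k)$ are likewise identical.

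There is one bookkeeping slip you should fix before carrying out the deferred computation. You set $y:=(\sigma|_k)^{4}(x)$, but since $(\sigma|_k)^{3}$ is the complex conjugation and acts as $-1$ on $X(\kcyc)$, this gives $(\sigma|_k)^{4}(x)=\bigl(\sigma|_k(x)\bigr)^{-1}$. With that choice the permutation induced by $\sigma|_k$ on $\{x,y,z\}$ becomes $x\mapsto y^{-1},\ y\mapsto z^{-1},\ z\mapsto x^{-1}$, and the second iterate already gives $(\sigma|_k)^{2}(\gam)=\gam x^{a-c}y^{\,b-a}z^{\,c-b}$, which does not match the displayed exponent $y^{a+b}z^{b+c}$ for $\I{3}$. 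The correct choice (and the one the paper makes) is $y:=\sigma|_k(x)$ and $z:=(\sigma|_k)^{2}(x)$, so that $\sigma|_k$ sends $x\mapsto y\mapsto z\mapsto x^{-1}$; then the iteration reproduces the stated formulas verbatim. This is precisely the ``matrix/exponent calculation'' you flagged as the main obstacle, and with the corrected definition of $y$ it goes through without further difficulty.
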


\begin{proof}
The lemma follows from the same argument as in the proof of Lemma \ref{Z in case non-Gal deg 4}.
By Lemma \ref{A(k)=D(k)}, every prime ideal lying above $p$ is inert in the extension $\tilde{k}/\kcyc$ and
$\Gal(\tilde{k}/k)$ is written as (\ref{Gal(tilde{k}/k) is generated by these six decomposition groups}).
Choose $x \in X(\kcyc)$ such that $\D{1}=\langle \gam \rangle \x \langle x \rangle$, 
and define $y:=\sigma|_k(x)$ and $z:=(\sigma|_k)^2(x)$.
Note that $y,z \in X(\kcyc)$ by Lemma \ref{complex embed in case deg 6}, 
and that 
$(\sigma|_k)^3$ is the complex conjugation, which acts on $X(\kcyc)=X(\kcyc)^-$ as inverse.
Applying $\sigma|_k$ repeatedly to $\D{1}$ and $\I{1}$, we obtain
$$
\begin{array}{lll}\label{decomp group in proof in the case deg 6}
&
\Do{2}=\langle \sigma|_k(\gam) \rangle \x \langle y \rangle,
&
\D{3}=\langle (\sigma|_k)^2(\gam) \rangle \x \langle z \rangle,
\\
\Do{1}=\langle (\sigma|_k)^3(\gam) \rangle \x \langle x^{-1} \rangle,
&
\D{2}=\langle (\sigma|_k)^4(\gam) \rangle \x \langle y^{-1} \rangle,
&
\Do{3}=\langle (\sigma|_k)^5(\gam) \rangle \x \langle z^{-1} \rangle,
\end{array}
$$
again by Lemma \ref{complex embed in case deg 6}.
Here, the left factor of each direct product is the inertia group corresponding to the respective prime ideal.
Since $X'(\kcyc)=0$, we have
$$
X(\kcyc)=
\left\langle
\D{1} \cap X(\kcyc),\ 
%\; \D{2} \cap X(\kcyc)
\ldots,\ 
%; \D{3} \cap X(\kcyc),\; 
%\Do{1} \cap X(\kcyc),\; \Do{2} \cap X(\kcyc),\; 
\Do{3} \cap X(\kcyc)
\right\rangle
=
\langle
x,y,z
\rangle.
$$
Since $\sigma|_k$ acts trivially on $\Gal(\kcyc/k)$,
%(see in the proof of Lemma \ref{complex embed in case non-Gal deg 4}), 
we get 
$\sigma|_k(\gam) \equiv \gam \pmod {X(\kcyc)}$.
Thus, there exist $a,b,c \in \Zp$ such that 
$$
\sigma|_k(\gam)=\gam x^a y^b z^c.
$$
Combining this with (\ref{decomp group in proof in the case deg 6}), we obtain the desired representation of the decomposition groups. 
Finally, 
by (\ref{Gal(tilde{k}/k) is generated by these six decomposition groups}),
%since $A(k) = D(k)$, the group $\Gal(\tilde{k}/k)$ is generated by these six decomposition groups.
we have
$\Gal(\tilde{k}/k)=\langle \gam, x,y,z \rangle$.
\end{proof}

We write $A \sim B$ for matrices $A$ and $B$ if they have the same rank.
We state some properties of the parameters $a$, $b$, and $c$:

\begin{cor}\label{A(k) in case deg 6}
Define
$$
|A|:=
(a-b+c)\left( (a^2+bc)+(b^2-ac)+(c^2+ab) \right).
$$
{\rm (} $|A|$ is actually the determinant of a certain matrix.{\rm )}
Depending on the number of generators of $A(k)$, the parameters $a$, $b$, and $c$ satisfy the following congruences modulo $p$:
\begin{itemize}
\setlength{\parskip}{0pt} % 段落間
\setlength{\itemsep}{0pt} % 項目間
\item[{\rm (i)}]
$A(k)=0$ $\iff$ $|A| \not\equiv 0$.
\item[{\rm (ii)}]
$A(k)$ is cyclic $\iff$ 
$|A| \equiv 0$ and
$
\begin{cases}
a^2+bc \not\equiv 0\ \text{or}
\\
b^2-ac \not\equiv 0\ \text{or}
\\
c^2+ab \not\equiv 0.
\end{cases}
$
\item[{\rm (iii)}]
$\dim_{\Fp}A(k)/pA(k)=2$ $\iff$ 
$
\begin{cases}
a^2+bc \equiv b^2-ac \equiv c^2+ab \equiv 0\ \text{and}
\\
\text{at least one of $a$, $b$, and $c$ is not congruent to $0$}.
\end{cases}
$
\end{itemize}
\end{cor}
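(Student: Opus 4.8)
The plan is to realise $A(k)$ as the cokernel of an explicit $3\times3$ matrix over $\Zp$ and then to read off (i)--(iii) from the rank, modulo $p$, of that matrix, via its determinant and its $2\times2$ minors.

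By Lemma~\ref{Z in case deg 6} (together with $L(\kcyc)=\tilde{k}$, which forces $L(k)\subset\tilde{k}$, so that the maximal subextension of $\tilde{k}/k$ unramified over $k$ equals $L(k)$ and only the primes above $p$ ramify), we have
$$
A(k)\isom\langle\gam,x,y,z\rangle\big/\bigl\langle\I{1},\Io{1},\I{2},\Io{2},\I{3},\Io{3}\bigr\rangle .
$$
Since $\I{1}=\langle\gam\rangle$, the generator $\gam$ dies, and modulo $\gam$ the five remaining inertia generators become elements of $X(\kcyc)=\langle x,y,z\rangle\isom\Zp^3$. Let $\Sigma$ denote the matrix, in the basis $x,y,z$, of the automorphism $\sigma|_k$ of $X(\kcyc)$; by the proof of Lemma~\ref{Z in case deg 6} one has $\sigma|_k(x)=y$, $\sigma|_k(y)=z$ and $\sigma|_k(z)=x^{-1}$ (the last because $(\sigma|_k)^3$ is complex conjugation, acting as $-1$ on $X(\kcyc)=X(\kcyc)^-$), so $\Sigma$ is the companion matrix of $t^3+1$. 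Put $v=(a,b,c)$, the exponent vector of $\sigma|_k(\gam)\gam^{-1}=x^ay^bz^c$. Using $\sigma|_k(\gam)=\gam x^ay^bz^c$ and the explicit inertia generators from Lemma~\ref{Z in case deg 6}, a direct check shows that, modulo $\gam$, the five remaining inertia generators are
$$
v,\qquad v+\Sigma v,\qquad v+\Sigma v+\Sigma^2v,\qquad \Sigma v+\Sigma^2v,\qquad \Sigma^2v ,
$$
which span the same $\Zp$-submodule of $\Zp^3$ as $v,\Sigma v,\Sigma^2v$. Therefore
$$
A(k)\isom\Coker\bigl(M\colon\Zp^3\to\Zp^3\bigr),\qquad
M=aI+b\Sigma+c\Sigma^2=\begin{bmatrix}a&-c&-b\\ b&a&-c\\ c&b&a\end{bmatrix},
$$
whose columns are $v,\Sigma v,\Sigma^2v$; this is the matrix referred to in the statement.

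Next, tensoring with $\Fp$ gives $A(k)/pA(k)\isom\Coker(\overline M)$, so $\dim_{\Fp}A(k)/pA(k)=3-\rank\overline M$. A direct expansion gives
$$
\det M=a^3-b^3+c^3+3abc=(a-b+c)\bigl((a^2+bc)+(b^2-ac)+(c^2+ab)\bigr)=|A|
$$
(the factorisation also follows from $t^3+1=(t+1)(t^2-t+1)$ applied to $M=aI+b\Sigma+c\Sigma^2$), and a similarly short computation shows that every $2\times2$ minor of $M$ is $\pm(a^2+bc)$, $\pm(b^2-ac)$ or $\pm(c^2+ab)$, while the entries of $M$ are $\pm a,\pm b,\pm c$. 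Hence $\rank\overline M=3\iff|A|\not\equiv0$; $\rank\overline M\le1\iff a^2+bc\equiv b^2-ac\equiv c^2+ab\equiv0$; and $\rank\overline M=0\iff a\equiv b\equiv c\equiv0$, the last being excluded here since $\dim_{\Fp}A(k)/pA(k)\le2$ by Proposition~\ref{target}(ii). Feeding this into $\dim_{\Fp}A(k)/pA(k)=3-\rank\overline M$ gives (i) for $\rank\overline M=3$, (ii) for $\rank\overline M=2$, and (iii) for $\rank\overline M=1$.

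The only slightly delicate point I anticipate is the bookkeeping in the second paragraph: verifying that the five reduced relation vectors really generate the cyclic $\Zp[\Sigma]$-module $\langle v,\Sigma v,\Sigma^2v\rangle$ --- so that $M$ is a genuine presentation matrix of $A(k)$ --- and keeping the sign in $\Sigma$ correct, since $(\sigma|_k)^3$ acts as $-1$, not $+1$, on $X(\kcyc)^-$. The determinant and $2\times2$-minor computations are otherwise routine.
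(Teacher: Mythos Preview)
Your argument is correct and follows the same strategy as the paper: present $A(k)$ as the cokernel of an explicit $3\times3$ matrix over $\Zp$ (the paper's matrix is the transpose of your $M$, with relation vectors as rows rather than columns) and read off the three cases from the rank of its reduction mod~$p$ via the determinant, the $2\times2$ minors, and the entries.

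The one genuine addition in your write-up is the use of the $\Zp[\Sigma]$-module structure. The paper simply writes down the reduced relation set $\langle\gam,x^ay^bz^c,x^{-c}y^az^b,x^{-b}y^{-c}z^a\rangle$ without comment, whereas you explain \emph{why} the six inertia relations collapse to three (they are $v,\,v+\Sigma v,\,v+\Sigma v+\Sigma^2v,\,\Sigma v+\Sigma^2v,\,\Sigma^2v$, visibly spanning $\langle v,\Sigma v,\Sigma^2v\rangle$) and why the determinant factors as $(a-b+c)\bigl((a^2+bc)+(b^2-ac)+(c^2+ab)\bigr)$ (via $t^3+1=(t+1)(t^2-t+1)$ applied to $M=aI+b\Sigma+c\Sigma^2$). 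This is a tidy conceptual packaging of exactly the same computation.

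One small citation slip: the exclusion of the case $a\equiv b\equiv c\equiv0$ (i.e.\ $\rank\overline M=0$) should be justified by the standing hypothesis~(III) in \S\ref{Narrowing-down target}, which already includes $\dim_{\Fp}A(k)/pA(k)\le2$, rather than by Proposition~\ref{target}(ii), whose hypothesis $X(\tilde{k})=0$ is not assumed in this corollary.
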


\begin{proof}
By Lemma \ref{Z in case deg 6}, we have
$$
A(k) \isom 
\langle \gam, x, y,z  \rangle/\langle \I{1},\ldots, \Io{3} \rangle
=
\langle \gam, x, y,z \rangle/\langle \gam, x^ay^bz^c, x^{-c}y^az^b, x^{-b}y^{-c}z^{a} \rangle.
$$
Hence, $A(k)$ admits a presentation with representation matrix
$$
A:=
\begin{bmatrix}
1 & 0 & 0 &0
\\
0 & a & b & c
\\
0 & -c & a & b
\\
0 & -b & -c & a
\end{bmatrix}.
$$
In other words, there exists a linear map $\langle \gam, x, y,z \rangle \to \langle \gam, x, y, z \rangle$ whose cokernel is isomorphic to $A(k)$ and whose representation matrix is $A$.
Then the determinant of $A$ is equal to $|A|$, as defined previously.
Therefore, it suffices to analyze the rank of the reduction modulo $p$, denoted $\overline{A}:=A \pmod p$:
$$
\left\{
\begin{array}{ccc}%{ccccl}
A(k)=0 & \iff & \rank\; \overline{A} =4 
\\
\text{$A(k)$ is cyclic} & \iff & \rank\; \overline{A} =3 
\\
\dim_{\Fp}A(k)/pA(k)=2 & \iff & \rank\; \overline{A} =2.
\end{array}
\right.
$$
%Define
%$A:=
%\begin{bmatrix}
%a & b & c
%\\
%-c & a & b
%\\
%-b & -c & a
%\end{bmatrix}
%$.
The corollary follows from this observation.
\end{proof}

%\begin{lem}\label{I_p1,I_p2,I_p3}
%Define $K \leftrightarrow \langle \I{1}, \I{2},\I{3} \rangle$.
%{\rm (}For the notation $\leftrightarrow$, see Definition {\rm \ref{leftrightarrow}}.{\rm )}
%Then, $\dim_{\Fp} \Gal(K/k)/p  \ge 2$ if and only if $(a^2+bc)+(b^2-ac)+(c^2+ab) \equiv 0$ {\rm mod} $p$.
%\end{lem}
%
%\begin{proof}
%Since
%$$
%\langle \I{1}, \I{2},\I{3} \rangle
%=
%\langle \gam, x^{a-c}y^{a+b}z^{b+c}, x^{-b-c}y^{a-c}z^{a+b} \rangle,
%$$
%we have a presentation of $\Gal(K/k)$ whose representation matrix is given by 
%$$
%\begin{bmatrix}
%1 & 0 & 0 &0
%\\
%0 & a-c & a+b & b+c
%\\
%0 & -b-c & a-c & a+b
%\end{bmatrix}
% \sim 
%\begin{bmatrix}
%1 & 0 & 0 &0
%\\
%0 & a-c & a+b & 0
%\\
%0 & -b-c & a-c & 0
%\end{bmatrix}.
%$$
%Then, $\dim_{\Fp} \Gal(K/k)/p \ge 2$ if and only if 
%$$
%\det 
%\begin{bmatrix}
%a-c & a+b
%\\
%-b-c & a-c
%\end{bmatrix}
%=(a^2+bc)+(b^2-ac)+(c^2+ab) \equiv 0
%\mod p.
%$$
%This completes the proof.
%\end{proof}

\begin{lem}\label{(a^2+bc)+(c^2+ab) (b^2-ac)+(c^2+ab) not 0}
We have 
$(a^2+bc)+(c^2+ab) \neq 0$ and $(b^2-ac)+(c^2+ab) \neq 0$.
\end{lem}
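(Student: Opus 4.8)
The plan is to argue by contradiction in the spirit of Remark~\ref{a neq 0, b neq 0, a+b neq 0}: whenever one of the two sums vanishes I will produce a $\Zp$-extension of $k$, unramified outside a conjugate pair of primes above $p$, in which a further prime above $p$ splits completely, which contradicts \cite[Lemma~3]{Fujii2015}. Keeping the notation of Lemma~\ref{Z in case deg 6}, set
$$
H:=\langle\, \I{2},\ \Io{2},\ \I{3},\ \Io{3}\,\rangle\subseteq\Gal(\tilde{k}/k).
$$
Dividing the generators of $\I{2},\I{3},\Io{3}$ by the generator $\gam x^{a}y^{b}z^{c}$ of $\Io{2}$, the generator coming from $\I{2}$ becomes the product of the other two, so after discarding it one gets
$$
H=\langle\, \gam x^{a}y^{b}z^{c},\ \ x^{-c}y^{a}z^{b},\ \ x^{-a-b}y^{-b-c}z^{a-c}\,\rangle .
$$

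Since $A(k)$ is finite, $|A|\ne 0$ (the presentation matrix in the proof of Corollary~\ref{A(k) in case deg 6} must be nonsingular over $\Zp$); hence the three vectors $(a,b,c),(-c,a,b),(-b,-c,a)$ are $\Zp$-linearly independent, so $x^{-c}y^{a}z^{b}$ and $x^{-a-b}y^{-b-c}z^{a-c}$ are $\Zp$-independent, $H$ has $\Zp$-rank $3$, and $M\leftrightarrow H$ is an infinite extension of $k$. As $H$ contains the inertia groups $\I{2},\Io{2},\I{3},\Io{3}$, the extension $M/k$ is unramified outside $\{\p_{1},\overline{\p_{1}}\}$.

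Suppose now $(a^{2}+bc)+(c^{2}+ab)=0$ and enlarge $H$ to $H^{+}:=\langle H,\D{3}\rangle=\langle \I{2},\Io{2},\D{3},\Io{3}\rangle$, where $\D{3}=\I{3}\x\langle z\rangle$. The elements of $H^{+}$ with trivial $\gam$-component form the $\Zp$-span of $x^{-c}y^{a}z^{b}$, $x^{-a-b}y^{-b-c}z^{a-c}$ and $z$, and the determinant of the $3\times3$ matrix of their exponent vectors in the coordinates $x,y,z$ is exactly $(a^{2}+bc)+(c^{2}+ab)$; so under our assumption this determinant vanishes and $H^{+}$ still has $\Zp$-rank $3$. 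Therefore $M^{+}\leftrightarrow H^{+}$ is an infinite extension of $k$, unramified outside $\{\p_{1},\overline{\p_{1}}\}$, in which $\p_{3}$ splits completely (as $\D{3}\subseteq H^{+}$). Passing to a $\Zp$-subextension of $M^{+}$ --- necessary because $\Gal(M^{+}/k)$ may have a nontrivial finite part --- yields a $\Zp$-extension of $k$ unramified outside the conjugate pair $\{\p_{1},\overline{\p_{1}}\}$ with $\p_{3}$ split completely, contradicting \cite[Lemma~3]{Fujii2015} exactly as in Remark~\ref{a neq 0, b neq 0, a+b neq 0}; hence $(a^{2}+bc)+(c^{2}+ab)\ne 0$. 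Repeating the argument with $\D{2}$ in place of $\D{3}$ (i.e.\ adjoining the generator $y$ of the $p$-split part of $\Do{2}$ rather than $z$) changes the relevant determinant to $-\big((b^{2}-ac)+(c^{2}+ab)\big)$, giving $(b^{2}-ac)+(c^{2}+ab)\ne 0$.

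The computational parts --- the two $3\times3$ determinants and the rank check guaranteeing that the fixed fields $M,M^{+}$ are genuinely infinite over $k$ --- are routine; the essential inputs are the nonvanishing of $|A|$ (finiteness of $A(k)$) together with \cite[Lemma~3]{Fujii2015}. The only delicate point is that $M^{+}$ need not itself be a $\Zp$-extension, so one must first pass to a $\Zp$-subextension before invoking that lemma.
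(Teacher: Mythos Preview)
Your argument is correct and follows essentially the same strategy as the paper: use \cite[Lemma~3]{Fujii2015} to rule out a $\Zp$-extension of $k$, unramified outside a single conjugate pair of primes above $p$, in which another prime above $p$ splits completely. The only cosmetic differences are that the paper works with the pair $\{\p_{3},\overline{\p_{3}}\}$ (via $K\leftrightarrow\langle\I{1},\Io{1},\I{2},\Io{2}\rangle$) and the finitely-split prime $\overline{\p_{2}}$ (resp.\ $\overline{\p_{1}}$), arguing directly that $\langle\I{1},\Io{1},\I{2},\Do{2}\rangle=\langle\gam,y,x^{a}z^{c},x^{-b-c}z^{a+b}\rangle$ has finite index, which reduces to a $2\times2$ determinant; whereas you work with $\{\p_{1},\overline{\p_{1}}\}$, the split prime $\p_{3}$ (resp.\ $\p_{2}$ or $\overline{\p_{2}}$), and a contrapositive formulation via a $3\times3$ determinant --- and you replace the paper's appeal to \cite[Lemma~2(2)]{Fujii2015} for the rank-$1$ statement by the direct observation that $|A|\ne 0$ since $A(k)$ is finite.
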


\begin{proof}
Define 
$K \leftrightarrow \langle \I{1}, \Io{1}, \I{2},\Io{2} \rangle$,
where the notation $\leftrightarrow$ is as in Definition {\rm \ref{leftrightarrow}}.
Then $K/k$ is an abelian $p$-extension unramified outside $\{ \p_3, \overline{\p_3} \}$.
By \cite[Lemmas 2(2) and 3]{Fujii2015}, we have 
$
\rank_{\Zp} \Gal(K/k)=1
$, 
and the prime ideal $\overline{\p_2}$ splits finitely in $K/k$.
Hence, the group
$
\langle \I{1}, \Io{1}, \I{2},\Do{2} \rangle
=
\langle \gam, y, x^az^c, x^{-b-c}z^{a+b} \rangle
$
is a subgroup of $\langle \gam, x,y,z \rangle$ of finite index.
This implies that 
$$
\det 
\begin{bmatrix}
a & c
\\
-b-c & a+b
\end{bmatrix}
=(a^2+bc)+(c^2+ab) \neq 0.
$$
By a similar argument, considering
$
\langle \I{1}, \Do{1}, \I{2},\Io{2} \rangle
=
\langle \gam, x, y^bz^c, y^{a-c}z^{a+b} \rangle
$, 
we obtain
$(b^2-ac)+(c^2+ab) \neq 0$.
\end{proof}

Let $G:=\Gal(L(\kcyc)/k)$.
We compute $\mathcal{K}(G)$.

\begin{lem}\label{mathcal{K} in case deg 6}
Define
$$
|K|:=
(a+b+c)^3+(a+b-c)^3
=
2(a+b)\left( (a+b)^2+3c^2 \right).
$$
{\rm (} $|K|$ is actually the determinant of a certain matrix.{\rm )}
Let $G:=\Gal(L(\kcyc)/k)$ and let $\mathcal{K}(G)$ be the module defined in Proposition {\rm \ref{central}}.
Then $\mathcal{K}(G) =0$ if and only if 
$|K| \not\equiv 0 \pmod p$.
\end{lem}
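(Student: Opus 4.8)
The plan is to follow the proof of Lemma~\ref{mathcal{K} in case non-Gal deg 4} verbatim, now with four free generators. Let $F$ be the free pro-$p$ group on $\gam, x, y, z$ and let $R$ be the closed normal subgroup generated by the six commutators $[\gam,x]$, $[\gam,y]$, $[\gam,z]$, $[x,y]$, $[x,z]$, $[y,z]$ together with their conjugates. By Lemma~\ref{Z in case deg 6} the sequence $1 \to R \to F \to G \to 1$, with $G=\Gal(L(\kcyc)/k)$, is a minimal presentation, so by~(\ref{homology isom commutator})
$$
H_2(G,\Zp) \isom R \cap [F,F]/[R,F],
$$
a free $\Zp$-module with basis the classes of those six commutators, which I fix once and for all.

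Next I would compute the images of the local homology groups under the map $\Phi$ of~(\ref{judge}). Exactly as in the degree-$4$ case, the image of $H_2(\D{i},\Zp)$ (resp. $H_2(\Do{i},\Zp)$) is generated by the class of the commutator of the displayed topological generator of the inertia group with the generator of the complementary $\Zp$-factor, both read off from Lemma~\ref{Z in case deg 6}. Reducing each of these six commutators modulo $[R,F]$ — using that the commutator pairing on $R\cap[F,F]/[R,F]$ is $\Zp$-bilinear and antisymmetric, so that $[u^m,v^n]\equiv[u,v]^{mn}$, the same calculus as in the proof of Lemma~\ref{mathcal{K} in case non-Gal deg 4} — and writing the result in the fixed basis, one obtains a $6\times 6$ presentation matrix $K$ for $\mathcal{K}(G)$. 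Hence $\mathcal{K}(G)=0$ if and only if $\Phi$ is surjective, i.e. if and only if $\det K \not\equiv 0$ mod $p$.

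It then remains to evaluate $\det K$. The three rows produced by $\D{1},\D{2},\D{3}$ have entries $\pm 1$ in the coordinates $[\gam,x],[\gam,y],[\gam,z]$ and zeros in the remaining three columns of that block; using them to clear those three columns and expanding the determinant along them, one is left with a $3\times 3$ determinant in the variables $u:=a+b-c$ and $v:=a+b+c$, of the shape
$$
\det\begin{bmatrix} u & v & 0 \\ v & 0 & u \\ 0 & u & v \end{bmatrix} = -(u^3+v^3);
$$
the cyclic pattern here is forced by the order-$6$ symmetry $\sigma|_k$ of Lemma~\ref{complex embed in case deg 6}, which permutes the six decomposition groups cyclically. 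Since $u+v=2(a+b)$, one has $u^3+v^3=(a+b-c)^3+(a+b+c)^3=|K|=2(a+b)\bigl((a+b)^2+3c^2\bigr)$, whence $\det K=\pm|K|$ and the lemma follows.

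The computation has no conceptual obstacle; the care needed is purely in the bookkeeping. First, reducing the six brackets modulo $[R,F]$ must be done cleanly — the inverses $x^{-1},y^{-1},z^{-1}$ occurring in the inertia factors are the easiest place to mishandle a sign. Second, and this is the step I expect to be most delicate to present, the $6\times 6$ determinant should be organised by the row reduction sketched above so that the factorization $u^3+v^3$ appears directly, rather than computing it as an opaque degree-$3$ polynomial in $a,b,c$ and then having to recognise $|K|$ afterwards.
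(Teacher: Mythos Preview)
Your proposal is correct and follows the paper's proof essentially verbatim: the same minimal presentation, the same six local commutators reduced modulo $[R,F]$, the same row reduction to a $3\times 3$ block in $u=a+b-c$, $v=a+b+c$, and the same determinant $\pm(u^3+v^3)=\pm|K|$. The only cosmetic differences are that the paper orders the commutator basis with $[z,x]$ rather than $[x,z]$ (so its $3\times3$ block is $\left[\begin{smallmatrix}u&0&-v\\v&u&0\\0&v&-u\end{smallmatrix}\right]$) and clears the $[\gam,\cdot]$ columns using the rows $\D{1},\Do{2},\D{3}$ instead of $\D{1},\D{2},\D{3}$; neither affects the argument.
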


\begin{proof}
Let $F$ be a free pro-$p$ group generated by $\gam$, $x$, $y$, and $z$, 
and let $R$ be the closed normal subgroup of $F$ generated by the commutators $[\gam,x]$, $[\gam,y]$, $[\gam,z]$, $[x,y]$, $[y,z]$, $[z,x]$, and their conjugates.
By Lemma \ref{Z in case deg 6}, we have a minimal presentation
$
1 \to R \to F \to \Gal(L(\kcyc)/k) \to 1
$
of $\Gal(L(\kcyc)/k)$, and by (\ref{homology isom commutator}),
\begin{eqnarray*}
H_2(\Gal(L(\kcyc)/k),\Zp) 
&\isom& 
R \cap [F,F]/[R,F]
\\
&=&
\langle [\gam,x], [\gam,y], [\gam,z], [x,y], [y,z], [z,x] \rangle [R,F]/[R,F]
\end{eqnarray*}
Similarly, the images of $H_2(\D{i},\Zp)$ and $H_2(\Do{i},\Zp)$ ($i=1,2,3$) via the map (\ref{judge}) are generated by
$$
\begin{array}{lll}
{\rm [} \gam,x ],
&
[\gam x^{-b-c}z^{a+b}, y^{-1}],
&
[\gam x^{a-c}y^{a+b},z],
\\ 
{\rm [} \gam y^{a+b-c}z^{a+b+c}, x^{-1} ],
&
[\gam x^az^c, y],
&
[\gam x^{-b}y^{-c}, z^{-1}],
\end{array}
$$
respectively.
Modulo $[R,F]$, these satisfy
$$
\begin{array}{l}
{\rm [}\gam x^{-b-c}z^{a+b}, y^{-1}] \equiv [\gam,y]^{-1}[x,y]^{b+c}[y,z]^{a+b},
\\
{\rm [}\gam x^{a-c}y^{a+b},z] \equiv [\gam,z][y,z]^{a+b}[z,x]^{c-a},
\\ 
{\rm [} \gam y^{a+b-c}z^{a+b+c}, x^{-1} ] \equiv [\gam,x]^{-1}[x,y]^{a+b-c}[z,x]^{-a-b-c},
\\
{\rm [}\gam x^az^c, y] \equiv [\gam,y][x,y]^a[y,z]^{-c},
\\
{\rm [}\gam x^{-b}y^{-c}, z^{-1}] \equiv [\gam,z]^{-1}[y,z]^c[z,x]^{-b}.
\end{array}
$$
Fixing the basis $\{ [\gam,x], [\gam,y], [\gam,z], [x,y], [y,z], [z,x] \}$ of the $\Zp$-module $R \cap [F,F]/[R,F]$, we obtain a presentation of $\mathcal{K}(G)$ with the following representation matrix:
\begin{eqnarray*}
K
&:=&
\left[
\begin{array}{ccc|ccc}
1 & & & 0 & 0 & 0
\\
 & 1 & & a & -c & 0
\\
 & & 1 & 0 & a+b & c-a
\\ \cline{1-6}
-1 & & & a+b-c & 0 & -a-b-c
\\
 & -1 & & b+c & a+b & 0
\\
 & & -1 & 0 & c & -b
\end{array}
\right],
\end{eqnarray*}
where blank entries are all zeros.
By elementary row operations, $K$ can be reduced to
\begin{eqnarray*}
\left[
\begin{array}{ccc|ccc}
1 & & & 0 & 0 & 0
\\
 & 1 & & a & -c & 0
\\
 & & 1 & 0 & a+b & c-a
\\ \cline{1-6}
 & & & a+b-c & 0 & -a-b-c
\\
%\multicolumn{2}{c}{ \mbox{\strut\rlap{\smash{\huge$O$}}}} 
& & & a+b+c & a+b-c & 0
\\
 & & & 0 & a+b+c & -a-b+c
\end{array}
\right].
\end{eqnarray*}
Thus, $\mathcal{K}(G)=0$ if and only if 
the determinant of the bottom-right $3 \x 3$ block is nonzero modulo $p$, i.e.,
$$
-\det 
\begin{bmatrix}
a+b-c & 0 & -a-b-c
\\
a+b+c & a+b-c & 0
\\
0 & a+b+c & -a-b+c
\end{bmatrix}
=(a+b+c)^3+(a+b-c)^3 \not\equiv 0
\mod p.
$$
This completes the proof.
\end{proof}

\begin{rem}
{\rm 
The correspondence with \cite{Okano2012-1}:
The special case considered in this article --- 
where $p=3$ and $k$ is a totally imaginary abelian extension of degree $6$ ---
is treated in \cite{Okano2012-1}.
Here, we present the correspondence of notation in Table \ref{table correspondence with okano2012-1}
(see also Remark \ref{in case deg 6 dim=1 p=3}).
}
\end{rem}
\begin{longtable}{|c|c|c|}%[H]
% \begin{center}
%  \begin{tabular}{|c|c|}
\hline
   \cite{Okano2012-1} & this article 
\\
\hline
\hline
$\del$, $\ep$, $\ep^\del$, $\ep^{\del^2}$ & $\sigma^2$, $x$, $z$, $y^{-1}$
%\\
%$\ep$  & $x$ 
%\\
%$\ep^\del$ & $z$
%\\
%$\ep^{\del^2}$ & $y^{-1}$
\\
\hline
$j_1$ and $j_2$ & $a+b+c$ and $-a-b+c$
\\
$J(\del)$ & $(a-b-c)+(a+b+c)\del +(-a-b+c)\del^2$
\\
$A(\del)$ & $(a-c)+(b+c)\sigma^2+(-a-b)(\sigma^2)^2$
\\
\hline
$\alp=j_1-j_2$ and $\alp \equiv 0$ mod $3$ & $2(a+b)$ and $a+b \equiv 0$ mod $3$ 
%\\
%$\alp \equiv 0$ mod $3$ & $a+b \equiv 0$ mod $3$ 
\\
\hline
%  \end{tabular}
\caption{}
\label{table correspondence with okano2012-1}
% \end{center}
\end{longtable}

Hereafter, the notation $\equiv$ denotes congruence modulo $p$.

In the remainder of this subsection, we restate condition (iii) of Theorem \ref{main thm}(III) using slightly different language.
Following Theorem \ref{main thm}, let $S_1=\{\p_1, \overline{\p_1} \}$ and $S_2=\{ \p_2, \overline{\p_2} \}$, and denote by $M_p^{S_i}(k)$ the maximal abelian $p$-extension of $k$ that is unramified outside $p$ and completely decomposed at every prime ideal in $S_i$ ($i=1,2$).
Then $M_p^{S_i}(k)$ is the fixed field of 
$\langle \D{i}, \Do{i}\rangle$,
and hence the compositum $M_p^{S_1}(k)M_p^{S_2}(k)$ is the fixed field of 
$\langle \D{1}, \Do{1}\rangle \cap \langle \D{2}, \Do{2}\rangle$:
$$
M_p^{S_1}(k)M_p^{S_2}(k) \leftrightarrow \langle \D{1}, \Do{1}\rangle \cap \langle \D{2}, \Do{2}\rangle.
$$
Therefore, condition (iii) of Theorem \ref{main thm}(III) is equivalent to the following condition:
\begin{eqnarray}\label{equiv (iii) by decomp gp} 
\left\langle \langle \D{1}, \Do{1}\rangle \cap \langle \D{2}, \Do{2}\rangle, \D3 \right\rangle 
=
\langle \gam, x,y,z \rangle.
\end{eqnarray}

\begin{lem}\label{|K| not cong 0 iff}
\ 
\begin{itemize}
\setlength{\parskip}{0pt} % 段落間
\setlength{\itemsep}{0pt} % 項目間
\item[{\rm (i)}]
Condition {\rm (iii)} of {\rm Theorem \ref{main thm}(III)} holds if and only if 
$|K| \not\equiv 0 \pmod p$.
\item[{\rm (ii)}]
The complex conjugation acts on the quotient 
$\langle \gam, x,y,z \rangle/\left( \langle \D{1}, \Do{1}\rangle \cap \langle \D{2}, \Do{2}\rangle \right)$ 
as inverse.
\end{itemize}
\end{lem}

\begin{proof}
(i) 
By the above argument, it suffices to show that condition (\ref{equiv (iii) by decomp gp}) holds if and only if 
$|K| \not\equiv 0$.
To this end, we compute the intersection
$\langle \D{1}, \Do{1}\rangle \cap \langle \D{2}, \Do{2}\rangle$.
For simplicity, set
$\op:=a+b+c$ and $\ominus:=a+b-c$.
Every element in this group can be expressed as
$$
\gam^{p_1}x^{q_1}(y^{\ominus}z^{\op})^{r_1}
=
(\gam x^a z^c)^{p_2}y^{q_2}(x^{-\op} z^{\ominus})^{r_2}
$$
for some $p_i, q_i, r_i \in \Zp$ ($i=1,2$).
Treating $p_i, q_i, r_i$ as six variables and fixing the basis $\{ \gam,x,y,z\}$ of the $\Zp$-module $\langle \gam, x,y,z \rangle$, we consider the 
%augmented 
coefficient matrix
\begin{eqnarray}\label{augmented coefficient matrix}
\left[
\begin{array}{ccc|ccc}
1 & 0 & 0 & 1 & 0 & 0  
\\
0 & 1 & 0 & a & 0 & -\op  
\\
0 & 0 & \ominus & 0 & 1 & 0  
\\
0 & 0 & \op & c & 0 & \ominus
\end{array}
\right].
\end{eqnarray}

In the case where $\op \equiv \ominus \equiv 0$, we have $a+b \equiv c \equiv 0$ and $|K| \equiv 0$ by Lemma \ref{mathcal{K} in case deg 6}.
On the other hand, modulo $p$, the matrix in (\ref{augmented coefficient matrix}) reduces to
$$
\left[
\begin{array}{ccc|ccc}
1 & 0 & 0 & 1 & 0 & 0  
\\
0 & 1 & 0 & a & 0 & 0  
\\
0 & 0 & 0 & 0 & 1 & 0  
\\
0 & 0 & 0 & 0 & 0 & 0
\end{array}
\right]
\mod p.
$$
This implies that $\langle \D{1}, \Do{1}\rangle \cap \langle \D{2}, \Do{2}\rangle$ is 
generated by an element of the form $\gam x^a$ multiplied by some $p$-power elements, together with $p$-power elements.
Hence, the dimension over $\Fp$ of 
%we have
\begin{eqnarray*}
%&&
%\dim_{\Fp}
\langle \gam, x,y,z \rangle/\left\langle \langle \D{1}, \Do{1}\rangle \cap \langle \D{2}, \Do{2}\rangle, \D{3}, \gam^p,x^p,y^p,z^p  \right\rangle
=
\dim_{\Fp}
\langle \gam, x,y,z \rangle/\left\langle \gam x^a,x^p,y^p,z  \right\rangle
%\\
%&\ge& 2,
\end{eqnarray*}
is greater than or equal to $2$, 
which implies that 
$\langle \gam, x,y,z \rangle 
\neq 
\left\langle \langle \D{1}, \Do{1}\rangle \cap \langle \D{2}, \Do{2} \rangle, \D{3} \right\rangle$.
Therefore, the equivalence in the claim holds in this case.

In the case where $\op \not\equiv 0$, applying Gaussian elimination to the matrix in (\ref{augmented coefficient matrix}), we obtain the solutions
\begin{eqnarray*}
&&
\begin{bmatrix}
p_1
\\
q_1
\\
r_1
\end{bmatrix}
=
\begin{bmatrix}
1
\\
a
\\
c \op^{-1} 
\end{bmatrix}
p_2+
\begin{bmatrix}
0
\\
-\op
\\
\op^{-1} \ominus
\end{bmatrix}
r_2
\ \ 
(\text{$p_2, r_2 \in \Zp$ are arbitrary}).
\end{eqnarray*}
Hence, we have
$$
\langle \D{1}, \Do{1}\rangle \cap \langle \D{2}, \Do{2}\rangle
=
\langle \gam^{\op}x^{a\op}y^{c\ominus}z^{c\op},\;  x^{\op^2}y^{-\ominus^2}z^{-\op\ominus} \rangle,
$$
and consequently
$$
\left\langle \langle \D{1}, \Do{1}\rangle \cap \langle \D{2}, \Do{2}\rangle, \D3 \right\rangle
=
\langle \gam^{\op}x^{a\op}y^{c\ominus},  x^{\op^2}y^{-\ominus^2}, \gam x^{a-c}y^{a+b}, z\rangle.
$$
Since
$$
\det
\left[
\begin{array}{cccc}
\op & a\op & c\ominus & 0   
\\
0 & \op^2 & -\ominus^2 & 0  
\\
1 & a-c & a+b & 0 
\\
0 & 0 & 0 & 1
\end{array}
\right]
=\op |K|,
$$
the equivalence in the claim follows.
In the case where $\ominus \not\equiv 0$, we similarly obtain the solutions
\begin{eqnarray*}
&&
\begin{bmatrix}
p_1
\\
q_1
\\
r_1
\end{bmatrix}
=
\begin{bmatrix}
1
\\
a+c \op \ominus^{-1}
\\
0
\end{bmatrix}
p_2
+
\begin{bmatrix}
0
\\
-\op^2 \ominus^{-2}
\\
\ominus^{-1}
\end{bmatrix}
q_2
\ \ 
(\text{$p_2, q_2 \in \Zp$ are arbitrary}).
\end{eqnarray*}
Hence, we have
$$
\langle \D{1}, \Do{1}\rangle \cap \langle \D{2}, \Do{2}\rangle
=
\langle \gam^{\ominus}x^{c\op+a\ominus},\;  x^{\op^2}y^{-\ominus^2}z^{-\op\ominus} \rangle,
$$
and consequently
$$
\left\langle
\langle \D{1}, \Do{1}\rangle \cap \langle \D{2}, \Do{2}\rangle,
\D{3}
\right\rangle
=
\langle \gam^{\ominus}x^{c\op+a\ominus},  x^{\op^2}y^{-\ominus^2},
\gam x^{a-c}y^{a+b}, z \rangle.
$$
Thus, the equivalence in the claim follows from the computation
 $$
\det
\left[
\begin{array}{cccc}
\ominus & c\op+a\ominus & 0 & 0   
\\
0 & \op^2 & -\ominus^2 & 0  
\\
1 & a-c & a+b & 0 
\\
0 & 0 & 0 & 1
\end{array}
\right]
=\ominus |K|.
$$
\\
(ii)
It suffices to show that, for $i=1,2$, the element $(\sigma|_k)^3(\gam)=\gam x^{a-b-c}y^{a+b-c}z^{a+b+c}$ is congruent to $\gam^{-1}$ modulo $\langle \D{i}, \Do{i} \rangle$.
This is equivalent to 
$\langle \D{i}, \Do{i}, \gam^2 x^{a-b-c}y^{a+b-c}z^{a+b+c} \rangle=\langle \D{i}, \Do{i} \rangle$.
For $i=1$, considering the representation matrix of the left-hand group and performing the computation
$$
\left[
\begin{array}{cccc}
1 & 0 & 0 & 0   
\\
1 & a-b-c & a+b-c & a+b+c 
\\
0 & 1 & 0 & 0 
\\
2 & a-b-c & a+b-c & a+b+c
\end{array}
\right]
\sim
\left[
\begin{array}{cccc}
1 & 0 & 0 & 0   
\\
1 & a-b-c & a+b-c & a+b+c 
\\
0 & 1 & 0 & 0 
\\
0 & 0 & 0 & 0
\end{array}
\right],
$$
we conclude that 
$\langle \D{1}, \Do{1}, \gam^2 x^{a-b-c}y^{a+b-c}z^{a+b+c} \rangle=\langle \D{1}, \Do{1} \rangle$.
Similarly, the same result holds for $i=2$.
\end{proof}

If $\dim_{\Fp} A(k)/pA(k)=2$, then $|K| \equiv 0 \pmod p$ always holds.
Indeed, by Lemma \ref{A(k) in case deg 6}, 
all of $a^2+bc$, $b^2-ac$, and $c^2+ab$ are congruent to $0$ modulo $p$, 
and hence the result follows immediately from Lemma \ref{mathcal{K} in case deg 6}.
Combining this with Lemma \ref{|K| not cong 0 iff}(i) and the definition of $|K|$ in Lemma \ref{mathcal{K} in case deg 6},
we see that condition {\rm (iii)} of {\rm Theorem \ref{main thm}(III)} does not hold if one of the following two cases occurs: 
\begin{itemize}
\setlength{\parskip}{0pt} % 段落間
\setlength{\itemsep}{0pt} % 項目間
\item[{\rm (A)}]
``$\dim_{\Fp}A(k)/pA(k) =2$''
or
``$\dim_{\Fp}A(k)/p A(k) \le 1$, $a+b \not\equiv 0$, and $(a+b)^2+3c^2 \equiv 0 \pmod p$''.
\item[{\rm (B)}]
$\dim_{\Fp}A(k)/pA(k) \le 1$ and $a+b \equiv 0 \pmod p$.
\end{itemize}
On the other hand, if 
%$|K| \not\equiv 0 \pmod p$, 
condition {\rm (iii)} of {\rm Theorem \ref{main thm}(III)} holds, 
then $\mathcal{K}(\Gal(L(\kcyc)/k))=0$.
Hence, $X(\tilde{k})=0$ by Proposition \ref{central} and Nakayama's lemma.
Summarizing the above arguments, the problem reduces to proving the following proposition:

\begin{prop}\label{(A)(B) then X(tilde{k}) neq 0}
Suppose that conditions {\rm (I)}, {\rm (II)}, and {\rm (III)} given in {\rm \S\ref{Narrowing-down target}} hold.
If either case {\rm (A)} or {\rm (B)} above holds, then $X(\tilde{k}) \neq 0$. 
\end{prop}

\begin{rem}\label{in case deg 6 dim=1 p=3}
{\rm
(In the case where $p=3$, relating to the correspondence with \cite{Okano2012-1})
Suppose that $p=3$.
Then we have $|K| \equiv 2(a+b)^3 \equiv 2(a+b)$.
Hence, 
$$
\text{
$|K| \not\equiv 0 \pmod 3$ 
$\iff$
$a+b \not\equiv 0 \pmod 3$.
}
$$
If $\dim_{\Fp}A(k)/3A(k)=2$, then $|K| \equiv 0 \pmod 3$ by the above argument.

On the other hand, if $A(k)$ is cyclic, then the condition $a+b \not\equiv 0$ always holds.
Indeed, assume $p=3$, $A(k)$ is cyclic, and $a+b \equiv 0$.
Then by Corollary \ref{A(k) in case deg 6}(ii), we have
$$
0 \equiv |A| 
\equiv 
(2a+c)\left( (a^2-ac)+(a^2-ac)+(c^2-a^2) \right)
\equiv 
(c-a)^3 \equiv c-a.
$$
From this, it follows that $a^2+bc \equiv b^2-ac \equiv c^2+ab \equiv 0$.
This contradicts Corollary \ref{A(k) in case deg 6}(ii).
Therefore, when $p=3$ and $A(k)$ is cyclic, 
we can omit condition (iii) of Theorem \ref{main thm}(III).

Moreover, since 
$\langle \D{1}, \D{2} \rangle= \langle \gam,x,y,z^{a+b} \rangle$,
the condition $a+b \not\equiv 0$ means that there is no nontrivial $p$-extension of $k$ that is completely decomposed at any prime ideal in 
$\{\p_1, \p_2 \}$ and unramified outside $\{ \p_3, \overline{\p_1}, \overline{\p_2}, \overline{\p_3} \}$.
Therefore, $|K| \not\equiv 0 \pmod 3$ is equivalent to condition (e) in Theorem 1.2 of \cite{Okano2012-1}.
}
\end{rem}

%%%%%%%%%%%%%%%%%%%%%%%%%%%%%%%%%%%%%%%%%%%%%%%%%%%%%%%%%%
%%%%%%%%%%%%%%%%%%%%%%%%%%%%%%%%%%%%%%%%%%%%%%%%%%%%%%%%%%
%%%%%%%%%%%%%%%%%%%%%%%%%%%%%%%%%%%%%%%%%%%%%%%%%%%%%%%%%%
%%%%%%%%%%%%%%%%%%%%%%%%%%%%%%%%%%%%%%%%%%%%%%%%%%%%%%%%%%

%%%%%%%%%%%%%%%%%%%%%%%%%%%%%%%%%%%%%%%%%%%%%%%%%%%%%%%%%%
%%%%%%%%%%%%%%%%%%%%%%%%%%%%%%%%%%%%%%%%%%%%%%%%%%%%%%%%%%

\subsection{Proof of Theorem \ref{main thm}(III)}\label{Pf of in case deg 6}
\subsubsection{Proof in the case (A)}
%\subsubsection{In the case $\dim_{\Fp}A(k)/pA(k) =2$}
\label{Proof in the case deg 6 dim=2}

Suppose that case (A) in Proposition \ref{(A)(B) then X(tilde{k}) neq 0} holds.
In this subsection, we prove the following proposition.
The method of proof is similar to that in \S \ref{proof in case non-Gal deg 4}.

\begin{prop}\label{main thm in case deg 6 dim=2}
Let $k$ be a CM-field of degree $6$.
Suppose that $k$ satisfies conditions {\rm (I)}, {\rm (II)}, and {\rm (III)} given in {\rm \S\ref{Narrowing-down target}}.
If either ``$\dim_{\Fp}A(k)/pA(k) =2$''
or
``$\dim_{\Fp}A(k)/pA(k) \le 1$, $a+b \not\equiv 0$, and $(a+b)^2+3c^2 \equiv 0 \pmod p$''
holds, then $X(\tilde{k})$ is nontrivial.
\end{prop}

Recall that the notation $\equiv$ denotes congruence modulo $p$.
For simplicity, define
$$
\op:=a+b+c,
\ \ 
\ominus:=a+b-c.
$$
We then have the following lemma.

\begin{lem}\label{abc, a+b+c, a+b-c notequi 0}
We have $c \not\equiv 0$, $\op=a+b+c \not\equiv 0$, and $\ominus=a+b-c \not\equiv 0 \pmod p$.
%\begin{itemize}
%\setlength{\parskip}{0pt} % 段落間
%\setlength{\itemsep}{0pt} % 項目間
%\item[{\rm (i)}]
%All of $a,b,c$ are not congruent to $0$.
%\item[{\rm (ii)}]
%$a+b+c \not\equiv 0$, $a+b-c \not\equiv 0$. 
%\end{itemize}
\end{lem}

\begin{proof}
First, we consider the case $\dim_{\Fp}A(k)/pA(k) =2$.
Suppose, for the sake of contradiction, that $a \equiv 0$.
Then $b \equiv c \equiv 0$ follows, since $b^2-ac \equiv c^2+ab \equiv 0$.
This is a contradiction since at least one of $a$, $b$, and $c$ is not congruent to $0$. 
Thus, $a \not\equiv 0$.
Similarly, we have $b \not\equiv 0$ and $c \not\equiv 0$.
Assume that $\op \equiv 0$, i.e., $c \equiv -a-b$.
Then
$$
0 \equiv c^2+ab \equiv a^2+3ab+b^2
\ \ \text{and}\ \ 
0 \equiv a^2+bc \equiv a^2-ab-b^2.
$$
From this, we obtain $2a(a+b) \equiv 0$.
Since $a \not\equiv 0$, this implies $a+b \equiv 0$.
Thus, $0 \equiv \op \equiv c \not\equiv 0$, which yields a contradiction.
Therefore, $\op \not\equiv 0$.
Now, suppose $\ominus \equiv 0$, i.e., $c \equiv a+b$.
Then 
$$
0 \equiv c^2+ab \equiv a^2+3ab+b^2
\ \ \text{and}\ \ 
0 \equiv a^2+bc \equiv a^2+ab+b^2.
$$
These imply $2ab \equiv 0$, contradicting $a \not\equiv 0$ and $b \not\equiv 0$.
Hence, $\ominus \not\equiv 0$.

Next, consider the case where $\dim_{\Fp}A(k)/pA(k) \le 1$, $a+b \not\equiv 0$, and $(a+b)^2+3c^2 \equiv 0 \pmod p$.
Suppose $c \equiv 0$.
Then, since $|K| \equiv 0$ and $a+b \not\equiv 0$ by assumption, we have
$$
0 \equiv (a+b)^2+3c^2 \equiv (a+b)^2 \not\equiv 0,
$$
which is a contradiction.
%This contradicts the assumption, 
Therefore, $c \not\equiv 0$.
Now, suppose $a+b \equiv \pm c$.
Then $0 \equiv (a+b)^2+3c^2 \equiv 4c^2$,
which contradicts $c \not\equiv 0$.
\end{proof}

Define $P_\infty$ and $Q_\infty$ as follows:
\begin{eqnarray}\label{D1Do1,D2Do2 in case deg 6 dim=2}
\begin{cases}
P_\infty \leftrightarrow \langle \D{1}, \Do{1} \rangle 
=
\langle \gam,x, y^{a+b-c}z^{a+b+c} \rangle
=
\langle \gam,x, y^{\ominus}z^{\op} \rangle,
\\
Q_\infty \leftrightarrow \langle \D{2}, \Do{2} \rangle 
=
\langle \gam x^az^c,y, x^{-a-b-c}z^{a+b-c} \rangle
=
\langle \gam x^az^c,y, x^{-\op}z^{\ominus} \rangle.
\end{cases}
\end{eqnarray}
By Lemma \ref{abc, a+b+c, a+b-c notequi 0}, both $P_\infty$ and $Q_\infty$ are $\Zp$-extensions of $k$.
For convenience, set 
$\mathbb{D}:= \langle \D{1}, \Do{1} \rangle \cap \langle \D{2}, \Do{2} \rangle$.
Then, by Lemma \ref{|K| not cong 0 iff} and its proof, 
$$
\mathbb{D}
%\langle \D{1}, \Do{1}\rangle \cap \langle \D{2}, \Do{2}\rangle
=
\langle \gam^{\op}x^{a\op}y^{c\ominus}z^{c\op},\;  x^{\op^2}y^{-\ominus^2}z^{-\op\ominus} \rangle
$$
and the complex conjugation 
%$J \in \Gal(k/k^+)$ 
acts on $\langle \gam,x,y,z \rangle/\mathbb{D}$ as inverse.
Note that $\langle \mathbb{D}, \I{3} \rangle=\langle \mathbb{D}, \Io{3} \rangle$, 
and that 
$\langle \gam,x,y,z \rangle/\mathbb{D}$ is a free $\Zp$-module of rank $2$.
\begin{figure}[H]
$$ 
%$$挟みでなくてもxy-picは使えるが，この図を中央にする方法がわからない
%点の定義を先にしてから矢印を配置しないとずれる
%\hspace*{200pt}
\begin{xy}
(0,0) *{\text{$\langle \gam,x,y,z \rangle$}}="k",
(-18,12) *{\text{$\langle \D{1},\Do{1},\I{3} \rangle$}}="P",
(-36,24) *{\text{$\langle \D{1},\Do{1} \rangle$}}="Pinf",
(18,12) *{\text{$\langle \D{2},\Do{2},\I{3} \rangle$}}="Q",
(36,24) *{\text{$\langle \D{2},\Do{2} \rangle$}}="Qinf",
(-9,30) *{\text{$\langle \mathbb{D}, \I{3} \rangle$}}="inertia",
(-12,24) *{\text{$ \bullet $}}="someZp",
(12,24) *{\text{$\bullet$}}="kinf",
(19,24) *{\text{$\leftrightarrow k_\infty$}},
(0,48) *{\text{$\mathbb{D}$}}="PinfQinf",
(-51,24) *{\text{$P_\infty \leftrightarrow$}},
(51,24) *{\text{$\leftrightarrow Q_\infty$}},
(12,48) *{\text{$\leftrightarrow P_\infty Q_\infty$}},
\ar@{-} "k";"P"
\ar@{-} "P";"Pinf"
\ar@{-} "k";"Q"
\ar@{-} "Q";"Qinf"
\ar@{-} "k";"kinf"
\ar@{-} "kinf";"PinfQinf"
\ar@{-} "k";"someZp"
\ar@{-} "someZp";"inertia"
\ar@{-} "inertia";"PinfQinf"
_(.4){\text{$\substack{\p_3, \overline{\p_3}: \\ \text{ram}}$}}
\ar@{-} "Pinf";"PinfQinf"
^{\text{$\substack{\p_3, \overline{\p_3}: \\ \text{unram}}$}}
\ar@{-} "Qinf";"PinfQinf"
_{\text{$\substack{\p_3, \overline{\p_3}: \\ \text{unram}}$}}
\end{xy}
$$
\caption{}
\label{X neq 0 galois diagram in case deg 6 dim=2}
\end{figure}
To construct the desired $\Zp$-extension $\ki/k$ satisfying Corollary \ref{cor anticyclotomic case},
we examine the inertia groups of $\p_3$ and $\overline{\p_3}$ in $\Gal(P_\infty Q_\infty/k)$
(see Figure \ref{X neq 0 galois diagram in case deg 6 dim=2}).
Note that 
$$
k \leftrightarrow 
\langle \D{1}, \Do{1}, \D{2}, \Do{2} \rangle
=
\langle \gam,x,y,z^{\op},z^{\ominus} \rangle
$$ 
by (\ref{D1Do1,D2Do2 in case deg 6 dim=2}) and Lemma \ref{abc, a+b+c, a+b-c notequi 0}.
Since $(a^2+bc)+(c^2+ab) \neq 0$ by Lemma \ref{(a^2+bc)+(c^2+ab) (b^2-ac)+(c^2+ab) not 0}, we have
$$
\langle \D{1}, \Do{1},\I{3} \rangle
=\langle \D{1}, \Do{1},\Io{3} \rangle
=\langle \gam, x, y^{-c}z^a, y^{a+b}z^{b+c} \rangle,
$$
which is a subgroup of $\langle \gam, x,y,z \rangle$ of finite index.
This implies that the extension $P_\infty/k$ is ramified at both $\p_3$ and $\overline{\p_3}$.
Hence, the extension $P_\infty Q_\infty/P_\infty$ is unramified at all primes lying above $\p_3$ and $\overline{\p_3}$.
Similarly, since $(b^2-ac)+(c^2+ab) \neq 0$, the extension $P_\infty Q_\infty/Q_\infty$ is unramified at all primes lying above $\p_3$ and $\overline{\p_3}$.
On the other hand, it is straightforward to verify that
$\langle \mathbb{D}, x \rangle =\langle \D{1}, \Do{1} \rangle$ and
$\langle \mathbb{D}, y \rangle =\langle \D{2}, \Do{2} \rangle$,
by Lemma \ref{abc, a+b+c, a+b-c notequi 0} and (\ref{D1Do1,D2Do2 in case deg 6 dim=2}).
Hence, there exist $\alp,\beta \in \Zp$ ($\alp, \beta \neq 0$) such that 
$$
\langle \mathbb{D}, \I{3} \rangle=\langle \mathbb{D}, \Io{3} \rangle=\langle \mathbb{D}, x^\alp y^\beta\rangle.
$$
Now, define an integer $d$ as follows:
If $\ord_p(\alp) \neq \ord_p(\beta)$, then set $d:=0$; otherwise, i.e., if $\beta/\alp \in \Zp^\x$, then choose $d$ such that $\pm 1 + pd \neq \beta/\alp$.
Then define
$$
\ki \leftrightarrow 
\begin{cases}
\langle \mathbb{D},xy^{1+pd} \rangle & \text{if $a+b \equiv 0$},
\\
\langle \mathbb{D},xy^{-1+pd} \rangle & \text{if $a+b \not\equiv 0$}.
\end{cases}
$$
Then $\ki$ is distinct from 
$P_\infty(\leftrightarrow \langle \mathbb{D}, x \rangle)$ and 
$Q_\infty(\leftrightarrow \langle \mathbb{D}, y \rangle)$.
Moreover, $\ki$ satisfies the following properties.

\begin{lem}\label{ki in case deg 6 dim=2}
\ 
\begin{itemize}
\setlength{\parskip}{0pt} % 段落間
\setlength{\itemsep}{0pt} % 項目間
\item[{\rm (i)}]
The extension $\ki/k$ is a $\Zp$-extension, and $P_\infty Q_\infty/\ki$ is a $\Zp$-extension unramified at $\p_3$ and $\overline{\p_3}$.
\item[{\rm (ii)}]
The complex conjugation acts on $\Gal(\ki/k)$ as inverse.
\item[{\rm (iii)}]
In the extension $\ki/k$, the prime $p$ is non-split and ramified.
\end{itemize}
\end{lem}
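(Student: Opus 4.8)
The plan is to carry out every verification inside the quotient $\Gal(\tilde{k}/k)/Z=\langle\gam,x,y,z\rangle/Z$, which is free of rank $2$ over $\Zp$ by the proof of Lemma~\ref{Z=Z' J acts on Z}(i), with $\{\bar{x},\bar{y}\}$ a $\Zp$-basis; here $P_\infty Q_\infty\leftrightarrow Z$, and since $pd\equiv 0\bmod p$ the exponent $\pm 1+pd$ is a $p$-adic unit, so the image of $\Gal(\tilde{k}/\ki)=\langle Z,xy^{\pm 1+pd}\rangle$ is the $\Zp$-line $\Zp\,\overline{xy^{\pm 1+pd}}$, a direct summand of $\Zp^{\op 2}$. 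This at once yields the $\Zp$-extension assertions of (i): $\Gal(\ki/k)=(\langle\gam,x,y,z\rangle/Z)/\Zp\,\overline{xy^{\pm 1+pd}}\isom\Zp$ and $\Gal(P_\infty Q_\infty/\ki)=\Zp\,\overline{xy^{\pm 1+pd}}\isom\Zp$. For (ii), Lemma~\ref{Z=Z' J acts on Z}(ii) says $J$ acts as $-1$ on $\langle\gam,x,y,z\rangle/Z$, so the line $\Zp\,\overline{xy^{\pm 1+pd}}$ is $J$-stable; hence $\Gal(\tilde{k}/\ki)$ is normal in $\Gal(\tilde{k}/k^+)$, the extension $\ki/k^+$ is Galois, and $J$ acts on the quotient $\Gal(\ki/k)$ as $-1$.

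What is left is the unramifiedness of $P_\infty Q_\infty/\ki$ at $\p_3,\overline{\p_3}$ in (i), together with all of (iii). For this I would compute, in the above basis, the images modulo $Z$ of the groups in Lemma~\ref{Z in case deg 6}. Put $s=a+b+c$ and $t=a+b-c$, which are units by Lemma~\ref{abc, a+b+c, a+b-c notequi 0}(ii), and set $u:=(a^2+bc)+(c^2+ab)$, $V:=(b^2-ac)+(c^2+ab)$, both nonzero by Lemma~\ref{(a^2+bc)+(c^2+ab) (b^2-ac)+(c^2+ab) not 0}. From the explicit generators of $Z$ in Lemma~\ref{Z=Z' J acts on Z}(i) one finds $\bar{z}=(s/t)\bar{x}-(t/s)\bar{y}$ and, crucially, $\bar{\gam}=-(u/t)\bar{x}$; hence $\D{1}$ and $\Do{1}$ both map onto $\Zp\bar{x}$ with inertia subgroups mapping onto the nonzero submodule $\Zp\,(u/t)\bar{x}$, the groups $\D{2},\Do{2}$ both map onto $\Zp\bar{y}$ with inertia subgroups mapping onto $\Zp\,(V/s)\bar{y}$, while $\I{3}$ and $\Io{3}$ both map onto $\Zp\big((V/t)\bar{x}+(u/s)\bar{y}\big)$ and $\D{3},\Do{3}$ moreover contain $\bar{z}$; finally $\Gal(\tilde{k}/\ki)$ maps onto $\Zp\big(\bar{x}+(\pm 1+pd)\bar{y}\big)$.

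From here everything is a $2\times 2$ determinant check. For (i): the inertia of $\p_3$ (the same as that of $\overline{\p_3}$) in $\Gal(P_\infty Q_\infty/k)$ is the line through $(V/t,u/s)$, i.e.\ through $(\alpha,\beta)$ with $\beta/\alpha=ut/(sV)$; it meets $\Zp\big(\bar{x}+(\pm 1+pd)\bar{y}\big)$ trivially precisely because $\pm 1+pd\neq\beta/\alpha$, which is exactly the condition defining $d$. For (iii): a prime of $k$ above $p$ ramifies in $\ki/k$ iff its inertia image is not on the line $\Zp\big(\bar{x}+(\pm 1+pd)\bar{y}\big)$, which holds for $\p_1,\overline{\p_1},\p_2,\overline{\p_2}$ since $u,V\neq 0$ and neither $\bar{x}$ nor $\bar{y}$ lies on that line, and for $\p_3,\overline{\p_3}$ again by the choice of $d$; such a prime is non-split iff its decomposition image together with $\bar{x}+(\pm 1+pd)\bar{y}$ spans $\Zp^{\op 2}$, which is immediate for $\p_1,\overline{\p_1}$ (image contains $\bar{x}$) and $\p_2,\overline{\p_2}$ (image contains $\bar{y}$), and for $\p_3,\overline{\p_3}$ follows since the image contains $\bar{z}$ and $\det\big(\bar{z},\,\bar{x}+(\pm 1+pd)\bar{y}\big)=\big(s^2(\pm 1+pd)+t^2\big)/(st)$ is a unit. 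This last point is where the sign in the definition of $\ki$ enters: modulo $p$ one has $s^2+t^2=2\big((a+b)^2+c^2\big)\equiv 2(a^2+ab+b^2)\not\equiv 0$ always, whereas $t^2-s^2=-4c(a+b)\not\equiv 0$ exactly when $a+b\not\equiv 0$, so choosing the sign $+$ when $a+b\equiv 0$ and $-$ otherwise keeps $\pm s^2+t^2$ a unit. The $\Zp$-extension $\ki$ so produced is exactly the one fed into Corollary~\ref{cor anticyclotomic case} in the proof of Proposition~\ref{main thm in case deg 6 dim=2}.

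The hard part will be the bookkeeping of the third-to-last paragraph: identifying the images modulo $Z$ of the six decomposition and inertia groups, and especially establishing $\bar{\gam}=-(u/t)\bar{x}$, which collapses $\D{1}$ (and, via the $J$-action, $\Do{1}$) to rank one and makes the remaining cases routine. Once one notices that all the relevant determinants factor through $u$, $V$ and $\pm s^2+t^2$ — the first two nonzero by Lemma~\ref{(a^2+bc)+(c^2+ab) (b^2-ac)+(c^2+ab) not 0}, the last a unit thanks to the sign convention — each of the assertions reduces to a single nonvanishing verification.
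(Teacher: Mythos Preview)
Your argument is correct and, in several places, cleaner than the paper's. The paper proves (i) by computing ranks of $3\times 4$ and $4\times 4$ matrices directly in $\langle\gam,x,y,z\rangle$, proves the non-splitting in (iii) by separate $5\times 4$ matrix computations for the two cases $a+b\equiv 0$ and $a+b\not\equiv 0$, and proves ramification in (iii) by an indirect route: it first invokes \cite[Lemma~3]{Fujii2015} to show that $\p_2,\overline{\p_2},\p_3,\overline{\p_3}$ ramify in $P_\infty/k$ and $\p_1,\overline{\p_1},\p_3,\overline{\p_3}$ ramify in $Q_\infty/k$, deduces that all six primes are unramified in $P_\infty Q_\infty/\ki$, and hence that they ramify in $\ki/k$. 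By contrast, you pass once and for all to the rank-two quotient $\langle\gam,x,y,z\rangle/Z$ with basis $\{\bar x,\bar y\}$, compute the key identity $\bar\gam=-(u/t)\bar x$ (with $u=(a^2+bc)+(c^2+ab)$), and then read off the images of all inertia and decomposition groups; every assertion becomes a single $2\times 2$ determinant controlled by $u$, $V=(b^2-ac)+(c^2+ab)$, or $\pm s^2+t^2$. The first two are nonzero by Lemma~\ref{(a^2+bc)+(c^2+ab) (b^2-ac)+(c^2+ab) not 0}, and your verification that $s^2+t^2\equiv 2(a^2+ab+b^2)\not\equiv 0$ (using $a^3+b^3\equiv 0$ in this $\dim_{\Fp}A(k)/p=2$ case) and $t^2-s^2=-4c(a+b)$ neatly explains the sign convention in the definition of $\ki$. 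One minor wording point: your sentence ``precisely because $\pm 1+pd\neq\beta/\alpha$'' literally covers only the case $\ord_p\alpha=\ord_p\beta$; when the valuations differ, the non-proportionality follows directly from the discrepancy in $p$-adic valuation, which your setup also handles. Overall your route avoids the appeal to \cite{Fujii2015} and replaces several case-by-case matrix reductions with a uniform computation.
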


\begin{proof}
Fix the basis $\{ \gam,x,y,z \}$ of the $\Zp$-module $\langle \gam,x,y,z \rangle$.
\\
(i)
Considering the representation matrix of $\Gal(\ki/k) \isom \langle \gam,x,y,z \rangle/\langle \mathbb{D},xy^{\pm 1+pd} \rangle$, we have
$$
\begin{bmatrix}
\op & a\op & c\ominus & c\op
\\
0 & \op^2 & -\ominus^2 & -\op\ominus
\\
0 & 1 & \pm 1+pd & 0
\end{bmatrix}
\sim 
\begin{bmatrix}
1 & 0 & 0 & *
\\
0 & 1 & 0 & *
\\
0 & 0 & 1 & *
\end{bmatrix}
$$
by Lemma \ref{abc, a+b+c, a+b-c notequi 0}.
This implies that both $\ki$ and $P_\infty Q_\infty$ are $\Zp$-extensions over $k$ and over $\ki$, respectively.
Since 
$$
\langle \mathbb{D},xy^{\pm 1+pd}, \I{3} \rangle
=
\langle \mathbb{D},xy^{\pm 1+pd}, x^\alp y^\beta \rangle
$$
and $\pm 1 + pd \neq \beta/\alp$,
we have 
$\langle \mathbb{D},xy^{\pm 1+pd}, \I{3} \rangle \neq \langle \mathbb{D},xy^{\pm 1+pd}\rangle$.
This implies that $\p_3$ is ramified in $\ki/k$.
%Suppose that $\p_3$ ramifies in $P_\infty Q_\infty/\ki$.
%Then $\langle \mathbb{D}, \I{3}, xy^{\pm 1 +pd} \rangle$ is a $\Zp$-module of rank $3$.
%However, by the definition of $d$, the rank of 
%$$
%\begin{bmatrix}
%\op & a\op & c\ominus & c\op
%\\
%0 & \op^2 & -\ominus^2 & -\op\ominus
%\\
%0 & 1 & \pm 1+pd & 0
%\\
%0 & \alp & \beta &0 
%\end{bmatrix}
%$$
%is $4$, which implies that $\langle \mathbb{D}, \I{3}, xy^{\pm 1 +pd} \rangle=\langle \gam,x,y,z \rangle$.
%This is a contradiction.
Therefore, $\p_3$ is unramified in $P_\infty Q_\infty/\ki$, and so is $\overline{\p_3}$.
\\
(ii)
This follows directly from Lemma \ref{|K| not cong 0 iff}.
\\
(iii)
First, we show that $p$ is non-split in $\ki/k$.
By Lemma \ref{abc, a+b+c, a+b-c notequi 0}, 
$$
\langle \mathbb{D}, xy^{\pm 1+pd}, \D{1} \rangle
=\langle \gam, x,y,z^{-\op\ominus} \rangle
=\langle \gam, x,y,z \rangle.
$$
This implies that $\p_1$ is non-split in the extension $\ki/k$.
By similar arguments, the same holds for $\p_2$, $\overline{\p_1}$, and $\overline{\p_2}$.
Now, consider $\p_3$ and $\overline{\p_3}$.
When $a+b \equiv 0$, by considering the representation matrix of $\langle \mathbb{D}, xy^{1+pd}, \D{3} \rangle$ modulo $p$, we have
\begin{eqnarray*}
&&
\begin{bmatrix}
\op & a\op & c\ominus & c\op
\\
0 & \op^2 & -\ominus^2 & -\op\ominus
\\
0 & 1 &  +1+pd & 0
\\
1 & a-c & a+b & 0 
\\
0 & 0  & 0 & 1
\end{bmatrix}
\\
&\equiv&
\begin{bmatrix}
c & ac & -c^2 & c^2
\\
0 & c^2 & -c^2 & c^2
\\
0 & 1 &  1 & 0
\\
1 & a-c & 0 & 0 
\\
0 & 0  & 0 & 1
\end{bmatrix}
\sim
\begin{bmatrix}
1 & a & -c & 0
\\
0 & 1 & -1 & 0
\\
0 & 1 &  1 & 0
\\
1 & a-c & 0 & 0 
\\
0 & 0  & 0 & 1
\end{bmatrix}
\sim
\begin{bmatrix}
1 & a & -c & 0
\\
0 & 1 & -1 & 0
\\
0 & 1 & 1 & 0
\\
0 & -c & c & 0 
\\
0 & 0  & 0 & 1
\end{bmatrix}.
\end{eqnarray*}
The last matrix has rank $4$, which implies that $\p_3$ (and hence $\overline{\p_3}$) is non-split in $\ki/k$.
Similarly, when $a+b \not\equiv 0$, the representation matrix of $\langle \mathbb{D}, xy^{- 1+pd}, \D{3} \rangle$ modulo $p$ takes the form
\begin{eqnarray*}
&&
\begin{bmatrix}
\op & a\op & c\ominus & c\op
\\
0 & \op^2 & -\ominus^2 & -\op\ominus
\\
0 & 1 &  -1 & 0
\\
1 & a-c & a+b & 0
\\
0 & 0  & 0 & 1
\end{bmatrix}.
\end{eqnarray*}
Adding $-\op$ times the fourth row to the first row, 
and $-\op^2$ times the third row to the second row,
the matrix reduces to 
\begin{eqnarray*}
&&
\begin{bmatrix}
0  & c(a+b+c) & -(a+b)^2-c^2 & c(a+b+c)
\\
0 & 0 & 4(a+b)c & -(a+b-c)(a+b+c)
\\
0 & 1 &  -1 & 0
\\
1 & a-c & a+b & 0
\\
0 & 0  & 0 & 1
\end{bmatrix}.
\end{eqnarray*}
This matrix has rank $4$, since $4(a+b)c$ is a unit in $\Zp$.
Hence, $\langle \mathbb{D}, xy^{- 1+pd}, \D{3} \rangle=\langle \gam,x,y,z \rangle$,
and therefore $\p_3$ (and hence $\overline{\p_3}$) is non-split in $\ki/k$.

Next, we show that $p$ is ramified in the extension $\ki/k$.
Recall that $P_\infty Q_\infty/k$ is a $\Zp^2$-extension, in particular a bicyclic extension, and that $\ki/k$ is a $\Zp$-extension.
Suppose that $\p_2$ is unramified in $P_\infty/k$.
Then $\overline{\p_2}$ is also unramified in $P_\infty/k$.
This implies that $P_\infty/k$ is a $\Zp$-extension unramified outside $\{ \p_3, \overline{\p_3} \}$ in which $\p_1$ splits completely.
This contradicts \cite[Lemma 3]{Fujii2015}.
Therefore, $\p_2$ and $\overline{\p_2}$ are ramified in $P_\infty/k$.
We have shown that 
$\langle \D{1}, \Do{1}, \I{3} \rangle 
=\langle \D{1}, \Do{1}, \Io{3} \rangle$
is a subgroup of $\langle \gam, x,y,z \rangle$ of finite index.
Hence, $\p_3$ and $\overline{\p_3}$ are also ramified in $P_\infty/k$.
Similarly, $\p_1$, $\overline{\p_1}$, $\p_3$, and $\overline{\p_3}$ are ramified in $Q_\infty/k$.
Consequently, $\p_1$ and $\overline{\p_1}$ are ramified in $P_\infty Q_\infty/P_\infty$ 
and $\p_2$ and $\overline{\p_2}$ are ramified in $P_\infty Q_\infty/Q_\infty$.
Therefore, these primes are unramified in $P_\infty Q_\infty/\ki$.
By (i), $\p_3$ and $\overline{\p_3}$ are unramified in $P_\infty Q_\infty/\ki$.
Thus, $P_\infty Q_\infty/\ki$ is an unramified extension, which implies that $p$ is ramified in $\ki/k$.
This completes the proof.
\end{proof}

We now prove Proposition \ref{main thm in case deg 6 dim=2}.
Recall that, by Lemma \ref{ki in case deg 6 dim=2}(ii), the complex conjugation acts on $\Gal(\ki/k)$ as inverse.
Since $\p_1$, $\overline{\p_1}$, $\p_2$, and $\overline{\p_2}$ split infinitely in $P_\infty Q_\infty/k$ and are non-split in $\ki/k$ by Lemma \ref{ki in case deg 6 dim=2}(iii), these four primes split completely in the extension $P_\infty Q_\infty/\ki$.
On the other hand, by Lemma \ref{|K| not cong 0 iff} and our assumption that $|K| \equiv 0$,
the prime ideal $\p_3$ splits in $P_\infty Q_\infty/k$.
However, since $\p_3$ does not split in $\ki/k$, it follows that $\p_3$ splits in $P_\infty Q_\infty/\ki$.
The same holds for $\overline{\p_3}$.
Therefore, there exists an unramified subextension $L'/\ki$ with $L' \neq \ki$ contained in $P_\infty Q_\infty$, over which $p$ splits completely.
Suppose, for the sake of contradiction, that $X(\tilde{k})$ is trivial.
Then, by Corollary \ref{cor anticyclotomic case}, the complex conjugation $J$ acts trivially on $\Gal(L'/\ki)$.
However, by Lemma \ref{|K| not cong 0 iff}(ii), $J$ acts on this group as inverse.
This contradiction shows that $X(\tilde{k})$ is nontrivial.

%%%%%%%%%%%%%%%%%%%%%%%%%%%%%%%%%%%%%%%%%%%%%%%%%%%%%%%%%%
%%%%%%%%%%%%%%%%%%%%%%%%%%%%%%%%%%%%%%%%%%%%%%%%%%%%%%%%%%

%%%%%%%%%%%%%%%%%%%%%%%%%%%%%%%%%%%%%%%%%%%%%%%%%%%%%%%%%%
%%%%%%%%%%%%%%%%%%%%%%%%%%%%%%%%%%%%%%%%%%%%%%%%%%%%%%%%%%

\subsubsection{Proof in the case (B)}
\label{Proof in the case deg 6 a+b equiv 0}

Suppose that 
$\dim_{\Fp}A(k)/pA(k) \le 1$ and $a+b \equiv 0 \pmod p$.
In this subsection, we prove the following proposition.
The outline of the proof is similar to that of \S\ref{proof in case non-Gal deg 4}.

\begin{prop}\label{main thm in case deg 6 dim=0,1 and a+b equiv 0}
Let $k$ be a CM-field of degree $6$.
Suppose that $k$ satisfies conditions {\rm (I)}, {\rm (II)}, and {\rm (III)} given in {\rm \S\ref{Narrowing-down target}}.
If $\dim_{\Fp}A(k)/pA(k) \le 1$ and $a+b \equiv 0 \pmod p$, then $X(\tilde{k})$ is nontrivial.
\end{prop}

%\begin{rem}\label{in case deg 6 dim=1 p=3}
%{\rm 
%If $p=3$ and $A(k)$ is cyclic, then the case in this subsection does not occur.
%Indeed, assume that $p=3$, $A(k)$ is cyclic and $a+b \equiv 0$.
%Then, by Corollary \ref{A(k) in case deg 6}(ii), we have
%$$
%0 \equiv |A| 
%\equiv 
%(2a+c)\left( (a^2-ac)+(a^2-ac)+(c^2-a^2) \right)
%\equiv 
%(c-a)^3 \equiv c-a.
%$$
%From this, we obtain $a^2+bc \equiv b^2-ac \equiv c^2+ab \equiv 0$.
%This contradicts Corollary \ref{A(k) in case deg 6}(ii).
%%In fact, if $p=3$ and $A(k)$ is cyclic, then $X(\tilde{k})$ is trivial by \cite[Theorem 1.2]{Okano2012-1}.
%}
%\end{rem}

First, set
$m:=\ord_p(a+b)>0$,
and let $J \in \Gal(k/k^+)$ denote the complex conjugation.
We note that 
$$
a-c \not\equiv 0,
\ \ 
b+c \not\equiv 0.
$$
Indeed, since $a+b \equiv 0$, we have the congruences 
$a^2+bc \equiv a(a-c)$, 
$b^2-ac \equiv a(a-c)$, and
$c^2+ab \equiv (c+a)(c-a)$.
By Corollary \ref{A(k) in case deg 6}(i)(ii), it follows that $a-c \not\equiv 0$, 
and hence 
$b+c \not\equiv 0$.
Next, to construct the desired $\Zp$-extension $\ki/k$ satisfying Corollary \ref{cor anticyclotomic case},
we define three cyclic $p$-extensions $R_m^{(i)}$ ($i=1,2,3$), each of degree $p^m$.
Define
$$
P_m^{(1)} 
\leftrightarrow 
\langle \D{1}, \D{2} \rangle = \langle \gam, x,y, z^{a+b} \rangle,
\ \ 
Q_m^{(1)} 
\leftrightarrow 
\langle \D{1}, \D{3} \rangle = \langle \gam, x,y^{a+b}, z \rangle.
$$
Then 
$P_m^{(1)}Q_m^{(1)} \leftrightarrow 
\langle \D{1}, \D{2} \rangle \cap \langle \D{1}, \D{3} \rangle= \langle \gam, x,y^{a+b}, z^{a+b} \rangle$,
and
$P_m^{(1)} \cap Q_m^{(1)}=k$.
Therefore, $P_m^{(1)}Q_m^{(1)}/k$ is a $(p^m,p^m)$-extension unramified outside $p$.

\begin{lem}\label{R_m^{(1)}}
Define $R_m^{(1)} \subset P_m^{(1)}Q_m^{(1)}$ by $R_m^{(1)} \leftrightarrow \langle \gam, x, yz^{-1}, z^{a+b} \rangle$.
Then:
\begin{itemize}
\setlength{\parskip}{0pt} % 段落間
\setlength{\itemsep}{0pt} % 項目間
\item[{\rm (i)}]
$R_m^{(1)}/k$ is a cyclic $p$-extension of degree $p^m$, and $J$ acts on $\Gal(R_m^{(1)}/k)$ as inverse.
\item[{\rm (ii)}]
In $R_m^{(1)}/k$, $\p_2$, $\overline{\p_2}$, $\p_3$, and $\overline{\p_3}$ are totally ramified, 
and $\p_1$ and $\overline{\p_1}$ split completely.
\end{itemize}
\end{lem}

\begin{proof}
(i)
It is straightforward to verify that $\Gal(R_m^{(1)}/k)=\langle \gam, x,y,z \rangle/\langle \gam, x, yz^{-1}, z^{a+b} \rangle \isom \Z/p^m \Z$.
Note that
$$
J(\gam)=
\gam x^{a-b-c} y^{a+b-c} z^{a+b+c}=\gam \cdot x^{a-b-c} \cdot (yz^{-1})^{a+b-c} \cdot z^{2(a+b)}
\in 
\langle \gam, x, yz^{-1}, z^{a+b} \rangle,
$$
so $J$ acts on $\langle \gam, x, yz^{-1}, z^{a+b} \rangle$.
Since the class of $z$ generates $\Gal(R_m^{(1)}/k)$, 
$J$ acts on $\Gal(R_m^{(1)}/k)$ as inverse.
\\
(ii)
Since $\D{1} \subset \langle \D{1}, \D{2} \rangle \cap \langle \D{1}, \D{3} \rangle$, 
$\p_1$ splits completely in $P_m^{(1)}Q_m^{(1)}/k$, 
and hence also in $R_m^{(1)}/k$.
On the other hand, because
$\langle \gam, x, yz^{-1}, z^{a+b}, \I{2} \rangle
%=\langle \gam, x, y^{a-c}, z^{a-c} \rangle
=\langle \gam, x, y, z \rangle$
due to $a-c \in \Zp^\x$,
the prime ideal $\p_2$ is totally ramified in $R_m^{(1)}/k$.
Similarly, $\p_3$ is also totally ramified in $R_m^{(1)}/k$.
%we see that the same is true for $\p_3$.
By (i), it follows that $\overline{\p_2}$ and $\overline{\p_3}$ are totally ramified, while $\overline{\p_1}$ splits completely in $R_m^{(1)}/k$. 
\end{proof}

Similarly, we construct cyclic $p$-extensions
$R_m^{(2)}$ and $R_m^{(3)}$, each of degree $p^m$, and obtain their properties from
$P_m^{(2)} \leftrightarrow \langle \D{2}, \D{1} \rangle$, $Q_m^{(2)} \leftrightarrow \langle \D{2}, \D{3} \rangle$, and $P_m^{(3)} \leftrightarrow \langle \D{3}, \D{1} \rangle$, $Q_m^{(3)} \leftrightarrow \langle \D{3}, \D{2} \rangle$, respectively:
$$
\begin{array}{lcl}
R_m^{(2)} \leftrightarrow \langle \gam x^{a-c},y,xz, x^{a+b} \rangle
&:&
\text{In $R_m^{(2)}/k$, }
\begin{cases}
\text{$\p_1$, $\overline{\p_1}$, $\p_3$, $\overline{\p_3}$ are totally ramified,}
\\
\text{$\p_2$, $\overline{\p_2}$ split completely,}
\end{cases}
\\
R_m^{(3)} \leftrightarrow \langle \gam x^{a-c},xy^{-1},z, y^{a+b} \rangle
&:&
\text{In $R_m^{(3)}/k$, }
\begin{cases}
\text{$\p_1$, $\overline{\p_1}$, $\p_2$, $\overline{\p_2}$ are totally ramified,}
\\
\text{$\p_3$, $\overline{\p_3}$ split completely.}
\end{cases}
\end{array}
$$
We now prove Proposition \ref{main thm in case deg 6 dim=0,1 and a+b equiv 0}.
Define
$$
L'_m:=R_m^{(2)}R_m^{(3)} 
\leftrightarrow 
\langle \gam x^{a-c},xy^{-1}z, x^{a+b}, y^{a+b}, z^{a+b} \rangle.
$$
Then we have the inclusion 
$R_m^{(1)} \subset L'_m$
(see Figure \ref{X neq 0 galois diagram in case deg 6 dim=0,1 a+b equiv 0}).
\begin{figure}[H]
$$ 
%$$挟みでなくてもxy-picは使えるが，この図を中央にする方法がわからない
%点の定義を先にしてから矢印を配置しないとずれる
%\hspace*{200pt}
\begin{xy}
(0,0) *{\text{$\langle \gam,x,y,z \rangle$}}="k",
(-48,24) *{\text{$\langle \gam x^{a-c},y,xz, x^{a+b} \rangle$}}="R_m^{(2)}",
(48,24) *{\text{$\langle  \gam x^{a-c},xy^{-1},z, y^{a+b} \rangle  $}}="R_m^{(3)}",
(-12,24) *{\text{$\langle \gam, x, yz^{-1}, z^{a+b} \rangle$}}="R_m^{(1)}",
(12,24) *{\text{$\bullet$}}="k_m",
(19,24) *{\text{$\leftrightarrow k_m$}},
(0,48) *{\text{$\langle \gam x^{a-c},xy^{-1}z, x^{a+b}, y^{a+b}, z^{a+b} \rangle$}}="R_m^{(1)}R_m^{(2)}",
(33,48) *{\text{$\leftrightarrow L'_m$}},
(-60,30) *{\text{$R_m^{(2)} \leftrightarrow$}},
(60,30) *{\text{$\leftrightarrow R_m^{(3)}$}},
(-20,30) *{\text{$R_m^{(1)} \leftrightarrow $}},
\ar@{-} "k";"R_m^{(1)}"
_{\text{$\substack{\p_1, \overline{\p_1}: \\ \text{split}}$}}
\ar@{-} "k";"R_m^{(2)}"
^{\text{$\substack{\p_2, \overline{\p_2}: \\ \text{split}}$}}
\ar@{-} "k";"R_m^{(3)}"
_{\text{$\substack{\p_3, \overline{\p_3}: \\ \text{split}}$}}
\ar@{-} "k";"k_m"
\ar@{-} "k_m";"R_m^{(1)}R_m^{(2)}"
\ar@{-} "R_m^{(1)}";"R_m^{(1)}R_m^{(2)}"
\ar@{-} "R_m^{(2)}";"R_m^{(1)}R_m^{(2)}"
\ar@{-} "R_m^{(3)}";"R_m^{(1)}R_m^{(2)}"
%\ar@{-} "Pinf";"PinfQinf"
%^{\text{$\substack{\p_3 \overline{\p_3} \\ \text{unram}}$}}
%\ar@{-} "Qinf";"PinfQinf"
%_{\text{$\substack{\p_3 \overline{\p_3} \\ \text{unram}}$}}
\end{xy}
$$
\caption{}
\label{X neq 0 galois diagram in case deg 6 dim=0,1 a+b equiv 0}
\end{figure}
Therefore, in the extension $L'_m/k$, the decomposition field of each prime ideal lying above $p$ has degree $p^m$ over $k$.
In the Galois group $\Gal(L'_m/k)$, the inertia groups of $\p_i$ and $\overline{\p_i}$ correspond to $\Gal(L'_m/R_m^{(i)})$ ($i=1,2,3$).
Note that the complex conjugation $J$ acts on $\Gal(L'_m/k)$ as inverse.
Define the intermediate field $k_m$ of $L'_m/k$ by 
$$
k_m \leftrightarrow \langle \gam x^{a-c},xy^{-1}z, xy, x^{a+b}, y^{a+b}, z^{a+b} \rangle.
$$
Then $k_m/k$ is a cyclic extension of degree $p^m$ in which the prime $p$ is totally ramified.
It follows that $L'_m/k_m$ is an unramified cyclic extension in which $p$ splits completely.
Finally, define
\begin{eqnarray}\label{ki in case deg 6 dim=0,1 a+b equiv 0}
\ki \leftrightarrow 
\langle \gam x^{a-c},xy^{-1}z, xy \rangle
=
\langle J(\gam)\gam,xy^{-1}z, xy \rangle.
\end{eqnarray}
Then $J$ acts on $\Gal(\ki/k)$ as inverse, 
and $k_m \subset \ki$.
Moreover, since $p$ is totally ramified in $k_m/k$,
it is also totally ramified in the $\Z_p$-extension $\ki/k$.
Set $L':=L'_m \ki$.
Then there is a natural projection
$X'(\ki) \surj \Gal(L'/\ki)$.
Assume, for the sake of contradiction, that $X(\tilde{k})$ is trivial.
Then, by Corollary \ref{cor anticyclotomic case}, $J$ acts trivially on $\Gal(L'/\ki)$.
However, since $J$ acts on $\Gal(L'_m/k)$ as inverse, 
it also acts as inverse on $\Gal(L'/\ki)$.
This is a contradiction.
Therefore, $X(\tilde{k})$ is nontrivial.
This completes the proof of Proposition \ref{main thm in case deg 6 dim=0,1 and a+b equiv 0}.

%%%%%%%%%%%%%%%%%%%%%%%%%%%%%%%%%%%%%%%%%%%%%%%%%%%%%%%%%%
%%%%%%%%%%%%%%%%%%%%%%%%%%%%%%%%%%%%%%%%%%%%%%%%%%%%%%%%%%

%%%%%%%%%%%%%%%%%%%%%%%%%%%%%%%%%%%%%%%%%%%%%%%%%%%%%%%%%%
%%%%%%%%%%%%%%%%%%%%%%%%%%%%%%%%%%%%%%%%%%%%%%%%%%%%%%%%%%

\subsection{An alternative proof of the sufficient condition}
\label{Proof in the case deg 6 |K| notequiv 0}

We present an alternative proof of the sufficient condition in Theorem \ref{main thm}(III),
based on ideas from Minardi \cite{Minardi}, Itoh \cite{Itoh2011}, and Fujii \cite{Fujii2015}.
In other words, we prove the following statement without relying on Proposition \ref{central}.

\begin{prop}\label{main thm in case deg 6 dim=0,1 and a+b notequiv 0}
Let $k$ be a CM-field of degree $6$.
Suppose that $k$ satisfies conditions {\rm (I)}, {\rm (II)}, and {\rm (III)} given in {\rm \S\ref{Narrowing-down target}}.
If $(a+b) \not\equiv 0$ and $(a+b)^2+3c^2 \not\equiv 0 \pmod p$, 
then $X(\tilde{k})$ is trivial.
\end{prop}

Note that, under these assumptions, we have $\dim_{\Fp}A(k)/pA(k) \le 1$
(see the argument preceding Proposition \ref{(A)(B) then X(tilde{k}) neq 0}).
Therefore, at least one of $a^2+bc$, $b^2-ac$, and $c^2+ab$ is nonzero modulo $p$.
For $1 \le i, j, k \le 3$, we define
$$
\begin{array}{lll}
T^{\langle i \rangle} \leftrightarrow \langle \I{i} \rangle,
&
T^{\langle ij \rangle} \leftrightarrow \langle \I{i}, \I{j} \rangle,
& 
T^{\langle ijk \rangle} \leftrightarrow \langle \I{i}, \I{j},\I{k} \rangle.
%\\
%Z^{\langle i \rangle} \leftrightarrow  \D{i},
%& 
%Z^{\langle ij \rangle} \leftrightarrow \langle \D{i}, \D{j} \rangle,
%& 
%Z^{\langle ijk \rangle} \leftrightarrow \langle \D{i}, \D{j}, \D{k} \rangle.
\end{array}
$$
Similarly, we define $T^{\langle \overline{i} \rangle}$, $T^{\langle i,\overline{j} \rangle}$, 
and so on;
for example, $T^{\langle 1\overline{2}3 \rangle} \leftrightarrow \langle \I{1}, \Io{2},\I{3} \rangle$.

\begin{lem}\label{(a^2+bc)+(c^2+ab) notequiv 0 or (b^2-ac)+(c^2+ab) notequiv 0}
Either $(a^2+bc)+(c^2+ab) \not\equiv 0 \pmod p$ or $(b^2-ac)+(c^2+ab) \not\equiv 0 \pmod p$.
\end{lem}

\begin{proof}
Suppose, for the sake of contradiction, that 
$(a^2+bc)+(c^2+ab) \equiv 0$ and $(b^2-ac)+(c^2+ab) \equiv 0$.
It then follows that $a^2+bc \equiv b^2-ac$.
Since $a+b \not\equiv 0$, 
this implies that $c \equiv b-a$.
It follows that 
$0 \equiv (a^2+bc)+(c^2+ab) \equiv 2(a^2-ab+b^2)$.
However, evaluating $(a+b)^2 + 3 c^2$ yields
$0 \not\equiv (a+b)^2+3c^2 \equiv 4(a^2-ab+b^2) \equiv 0$,
which is a contradiction.
Hence, the claim follows.
\end{proof}

%\begin{proof}
%By Proposition \ref{central} and Lemma \ref{mathcal{K} in case deg 6}, we have only to show 
%$
%|K|=
%2(a+b)\left( (a+b)^2+3c^2 \right)
%\not\equiv 0.
%$
%If $c \equiv 0$, then $|K| \equiv 2(a+b)^3 \not\equiv 0$, since $a+b \not\equiv 0$.
%Suppose that $c \not\equiv 0$, then substituting $a^2 \equiv -bc-(c^2+ab)$ and $b^2 \equiv ac-(c^2+ab)$ in $|K|$, we obtain
%$|K| \equiv 2(a+b)c(a-b+c)$.
%By Lemma \ref{A(k) in case deg 6}(i), $a-b+c \not\equiv 0$, so $|K| \not\equiv 0$ in the case $c \equiv 0$.
%This completes the proof.
%\end{proof}

%%%%%%%%%%%%%%%%%%%%%%%%%%%%%%%%%%%%%%%%%%%%%%%%%%%%%%%%%%%%%%%%%%

\subsubsection{Climbing up $\Zp$-extensions}
\label{ascent}

The first step of the proof is to show the following, in a manner similar to the argument in \S \ref{another proof in case non-Gal deg 4}.

\begin{prop}\label{X(T^{1})=0}
For each $i$ with $1 \le i \le 3$, 
the modules $X(T^{\langle i \rangle})$ and $X(T^{\langle \overline{i} \rangle})$ are trivial.
\end{prop}

Consider the field $T^{\langle 1\overline{2}3 \rangle} \leftrightarrow \langle \I{1}, \Io{2},\I{3} \rangle$, 
which contains $L(k)$.
Since $\langle \I{1}, \Io{2},\I{3} \rangle=\langle \gam, x^a y^b z^c, x^{-c} y^a z^b \rangle$ and $\dim_{\Fp}A(k)/pA(k) \le 1$, 
it follows that $T^{\langle 1\overline{2}3 \rangle}/k$ is a $\Zp$-extension.

\begin{lem}\label{X(T^{1,v2,3})=0}
$X(T^{\langle 1\overline{2}3 \rangle})=0$.
\end{lem}

\begin{proof}
The proof is essentially the same as that of Lemma \ref{X^1=0 in case non-Gal deg 4}.
Note that if $A(k)=0$, the claim follows directly from \cite[Corollary 3.3]{Itoh2011}, although this fact is not required here.
Let $L_0$ denote the maximal subfield of $L(T^{\langle 1\overline{2}3 \rangle})$ that is abelian over $k$.
%Lの記号は Itoh に合わせる(Fujii とは1ズレる)
Then, since $\Gal(T^{\langle 1\overline{2}3 \rangle}/k) \isom \Zp$,
we have an isomorphism 
$\Gal(L_0/T^{\langle 1\overline{2}3 \rangle}) \isom X(T^{\langle 1\overline{2}3 \rangle})_{\Gal(T^{\langle 1\overline{2}3 \rangle}/k)}$.
Recall that $\tilde{k}$ is the maximal abelian $p$-extension of $k$ unramified outside $p$.
Thus, $T^{\langle 1\overline{2} \rangle}$ is the maximal abelian $p$-extension of $k$ unramified outside the set
$\{ \overline{\p_1},\p_2, \p_3, \overline{\p_3} \}$.
Consequently, since $L_0/k$ is an abelian $p$-extension unramified outside $\{ \overline{\p_1}, \p_2, \p_3, \overline{\p_3} \}$, 
we conclude that $L_0 \subset T^{\langle 1\overline{2} \rangle} \cap L(T^{\langle 1\overline{2}3 \rangle})$.
On the other hand, by the definition of $T^{\langle 1\overline{2} \rangle}$, 
all primes of $T^{\langle 1\overline{2}3 \rangle}$ lying above $\p_3$ are totally ramified in the extension $T^{\langle 1\overline{2} \rangle} \cap L(T^{\langle 1\overline{2}3 \rangle})/T^{\langle 1\overline{2}3 \rangle}$.
It follows that $T^{\langle 1\overline{2} \rangle} \cap L(T^{\langle 1\overline{2}3 \rangle})=T^{\langle 1\overline{2}3 \rangle}$, and hence $L_0=T^{\langle 1\overline{2}3 \rangle}$.
Therefore, the module $X(T^{\langle 1\overline{2}3 \rangle})_{\Gal(T^{\langle 1\overline{2}3 \rangle}/k)}$ is trivial, 
and consequently, by Nakayama's lemma, 
$X(T^{\langle 1\overline{2}3 \rangle})$ itself is trivial.
\end{proof}

From this point until the conclusion of Lemma \ref{X(k^{(3)})=0 in case deg 6 dim=0,1 and a+b notequiv 0}, we assume that $a^2+bc \not\equiv 0$.
(Similar arguments can be applied to the cases where $b^2-ac \not\equiv 0$ or $c^2+ab \not\equiv 0$.)
Consider the field $T^{\langle 1\overline{2} \rangle} \leftrightarrow \langle \I{1}, \Io{2} \rangle=\langle \gam,x^a y^b z^c \rangle$.

\begin{lem}\label{X(T^{1,v2})=0}
Under the assumption that $a^2+bc \not\equiv 0 \pmod p$, we have
$X(T^{\langle 1\overline{2} \rangle})=0$.
\end{lem}

\begin{proof}
Since $a^2+bc \not\equiv 0$, it follows that
$\langle \I{1}, \Io{2}, \D{3} \rangle=\langle \gam,x^a y^b, x^{-c} y^a, z \rangle=\langle \gam,x,y,z \rangle$.
This implies that the prime ideal $\p_3$ is totally inert in the extension $T^{\langle 1\overline{2}3 \rangle}/k$.
Hence, there is exactly one prime of $T^{\langle 1\overline{2}3 \rangle}$ lying above $\p_3$.
Hence, in the extension $T^{\langle 1\overline{2} \rangle}/T^{\langle 1\overline{2}3 \rangle}$, exactly one prime is ramified, and that prime is totally ramified.
Combining this with Lemma \ref{X(T^{1,v2,3})=0}, we conclude that $X(T^{\langle 1\overline{2} \rangle})=0$ 
by Iwasawa's criterion \cite{Iwasawa1956}.
\end{proof}

By Lemma \ref{(a^2+bc)+(c^2+ab) notequiv 0 or (b^2-ac)+(c^2+ab) notequiv 0},
we define a $\Zp$-extension $N$ of $k$ and its complex conjugate $\overline{N}$ as follows.
If $(a^2+bc)+(c^2+ab) \not\equiv 0$, we set
\begin{eqnarray*}
\begin{cases}
N \leftrightarrow 
\langle \I{1}, \Do{2} \rangle 
=\langle \gam, y, x^a z^c \rangle,
\\ 
\overline{N} \leftrightarrow 
\langle \Io{1}, \D{2} \rangle 
=\langle \gam x^{-b-c} z^{a+b}, y, x^a z^c \rangle.
\end{cases}
\end{eqnarray*}
Since $(a^2+bc)+(c^2+ab) \not\equiv 0$, 
we can easily verify that at least one of $a$ and $c$ is nonzero modulo $p$.
Hence, both $N$ and $\overline{N}$ are $\Zp$-extensions.
Moreover, since $(a^2+bc)+(c^2+ab) \not\equiv 0$ again, it follows that
\begin{eqnarray}\label{N^{(1)} cap overline{N^{(1)}}}
N \cap \overline{N} =k 
\leftrightarrow
\langle \gam, y, x^a z^c, x^{-b-c}z^{a+b} \rangle
=\langle \gam,x,y,z \rangle.
\end{eqnarray}
On the other hand, if $(b^2-ac)+(c^2+ab) \not\equiv 0$, then we define
\begin{eqnarray*}
\begin{cases}
N \leftrightarrow 
\langle \D{1}, \Io{2} \rangle 
=\langle \gam, x, y^b z^c \rangle,
\\
\overline{N} \leftrightarrow 
\langle \Do{1}, \I{2} \rangle 
=\langle \gam y^{a-c} z^{a+b}, x,y^b z^c \rangle.
\end{cases}
\end{eqnarray*}
By a similar argument, both $N$ and $\overline{N}$ are $\Zp$-extensions, and
$$
N \cap \overline{N} \leftrightarrow
\langle \gam, x, y^b z^c, y^{a-c} z^{a+b} \rangle
=\langle \gam,x,y,z \rangle.
$$
We assume that the former condition $(a^2+bc)+(c^2+ab) \not\equiv 0$ holds.
Then all primes lying above $\overline{\p_2}$ are totally inert in the extension $T^{\langle 1\overline{2} \rangle}/N$.
We now focus on the field $T^{\langle 1 \rangle} \leftrightarrow \I{1}$.
(In the complementary case where $(b^2-ac)+(c^2+ab) \not\equiv 0$, by interchanging the roles of $\p_1$ and $\overline{\p_2}$, we instead consider $T^{\langle \overline{2} \rangle} \leftrightarrow \Io{2}$.)

Let $L_2$ denote the maximal subfield of $L(T^{\langle 1 \rangle})$ that is abelian over $T^{\langle 1\overline{2} \rangle}$.
Then, since $\Gal(T^{\langle 1 \rangle}/T^{\langle 1\overline{2} \rangle}) \isom \Zp$, 
there exists an isomorphism $\Gal(L_2/T^{\langle 1\overline{2} \rangle}) \isom X(T^{\langle 1 \rangle})_{\Gal(T^{\langle 1 \rangle}/T^{\langle 1\overline{2} \rangle})}$.
By Lemma \ref{X(T^{1,v2})=0}, we know that $X(T^{\langle 1\overline{2} \rangle})$ is trivial, 
and $\overline{\p_2}$ is totally inert in the extension $T^{\langle 1\overline{2} \rangle}/N$.
Hence, by the same argument as in the proof of Lemma \ref{L_2/N_p is abelian in case non-Gal deg 4}, 
we obtain the following lemma, whose proof we omit.

\begin{lem}\label{L_2/N^{(1)} is abelian in case deg 6 dim=0,1 and a+b notequiv 0}
Suppose that $a^2+bc \not\equiv 0$ and $(a^2+bc)+(c^2+ab) \not\equiv 0 \pmod p$.
With the notation above, the extension $L_2/N$ is abelian.
\end{lem}

Using a technique similar to that in \S \ref{another proof in case non-Gal deg 4},
we prove the following:

\begin{lem}\label{X(k^{(3)})=0 in case deg 6 dim=0,1 and a+b notequiv 0}
Suppose that $a^2+bc \not\equiv 0 \pmod p$.
Then either $X(T^{\langle 1 \rangle})$ or $X(T^{\langle \overline{2} \rangle})$ is trivial,
according as $(a^2+bc)+(c^2+ab) \not\equiv 0$ or $(b^2-ac)+(c^2+ab) \not\equiv 0 \pmod p$,
respectively. 
\end{lem}

\begin{proof}
In the case where $(b^2-ac)+(c^2+ab) \not\equiv 0$, by interchanging the roles of $\p_1$ and $\overline{\p_2}$, 
we can prove the claim similarly.
Thus, we may assume that 
$(a^2+bc)+(c^2+ab) \not\equiv 0$.

We now show that $\Gal(L_2/T^{\langle 1 \rangle})$ is trivial.
To do so, it suffices to show that $L_2\tilde{k}=\tilde{k}$,
since $\p_1$ is totally ramified in $\tilde{k}/T^{\langle 1 \rangle}$, which implies that 
$\Gal(L_2/T^{\langle 1 \rangle}) \isom \Gal(L_2\tilde{k}/\tilde{k})$.
By Lemma \ref{L_2/N^{(1)} is abelian in case deg 6 dim=0,1 and a+b notequiv 0}, the extension $L_2\tilde{k}/N$ is abelian.
Let $\overline{L_2}$ denote the maximal subfield of $L(T^{\langle \overline{1} \rangle})$ that is abelian over $T^{\langle \overline{1}2 \rangle}$.
Similarly to Lemma \ref{L_2/N^{(1)} is abelian in case deg 6 dim=0,1 and a+b notequiv 0}, the extension
$\overline{L_2}\tilde{k}/\overline{N}$ 
is also abelian
(see Figure \ref{X(k^{(3)}) = 0 galois diagram in case deg 6 dim=0,1 and a+b notequiv 0}).
\begin{figure}[H]
$$ %$$挟みでなくてもxy-picは使えるが，この図を中央にする方法がわからない
%\hspace*{200pt}
\begin{xy}
(0,12) *{\text{$k_{{\rm cyc}}$}}="kcyc",
(0,0) *{\text{$k$}}="k",
%(13,0) *{\text{$\leftrightarrow  \langle D_{\p},  D_{\overline{\p}} \rangle$}}="Z",
%
(-20,12) *{\text{$N$}}="N1",
(-20,24) *{\text{$T^{\langle 1\overline{2} \rangle}$}}="k2",
(-20,36) *{\text{$T^{\langle 1 \rangle}$}}="k3",
(20,12) *{\text{$\overline{N}$}}="N1-conj",
(20,24) *{\text{$T^{\langle \overline{1}2 \rangle}$}}="k2-conj",
(20,36) *{\text{$T^{\langle \overline{1} \rangle}$}}="k3-conj",
(0,48) *{\text{$\tilde{k}$}}="tildek",
(0,24) *{\text{$N\overline{N}$}}="N1N1-conj",
(-40,48) *{\text{$L_2$}}="L2",
(40,48) *{\text{$\overline{L_2}$}}="L2-conj",
(-10,54) *{\text{$M$}}="M",
(10,54) *{\text{$\overline{M}$}}="M-conj",
(-20,60) *{\text{$L_2\tilde{k}$}}="tildekL2",
(20,60) *{\text{$\overline{L_2}\tilde{k}$}}="tildekL2-conj",
\ar@{-} "k";"N1",
\ar@{-} "N1";"k2",
\ar@{-} "k2";"k3",
%\ar@{-} "N1";"k3",
\ar@{-} "k3";"tildek",
\ar@{-} "N1";"N1N1-conj",
\ar@{-} "k";"N1-conj",
\ar@{-} "N1-conj";"k2-conj",
\ar@{-} "k2-conj";"k3-conj",
%\ar@{-} "N1-conj";"k3-conj",
\ar@{-} "k3-conj";"tildek",
\ar@{-} "N1-conj";"N1N1-conj",
\ar@{-} "k";"kcyc",
\ar@{-} "kcyc";"N1N1-conj",
\ar@{-} "N1N1-conj";"tildek",
\ar@{-} "tildek";"M",
\ar@{-} "M";"tildekL2",
\ar@{-} "tildek";"M-conj",
\ar@{-} "M-conj";"tildekL2-conj",
\ar@{-} "L2";"tildekL2",
\ar@{-} "L2-conj";"tildekL2-conj",
\ar@{-} "k3";"L2",
\ar@{-} "k3-conj";"L2-conj",
\ar@/^10mm/ @{.} "N1";"tildekL2"^{\rm abel}
\ar@/_10mm/ @{.} "N1-conj";"tildekL2-conj"_{\rm abel}
\end{xy}
$$
\caption{}
\label{X(k^{(3)}) = 0 galois diagram in case deg 6 dim=0,1 and a+b notequiv 0}
\end{figure}
We have the following chain of field inclusions:
\begin{eqnarray}\label{k subset kcyc subset NN in case non-Gal deg 6}
k= N \cap \overline{N} \subset \kcyc \subset N\overline{N}
\ 
(\leftrightarrow \langle y, x^a z^c \rangle),
\end{eqnarray}
where the first equality follows from (\ref{N^{(1)} cap overline{N^{(1)}}}).
% and from the correspondence $\kcyc \leftrightarrow \langle x,y,z\rangle$.
Let $\mathcal{C}_{\tilde{k}/\kcyc}$ denote the subfield of $L(\tilde{k})/\tilde{k}$ characterized by 
\begin{eqnarray}\label{central tilde{k}/kcyc}
\Gal(\mathcal{C}_{\tilde{k}/\kcyc}/\tilde{k}) \isom X(\tilde{k})_{\Gal(\tilde{k}/\kcyc)}.
\end{eqnarray}
Applying Lemma \ref{Galois L/F} with $F=k^+$, $K=\tilde{k}$, and $H=\Gal(\tilde{k}/\kcyc)$ (so $L=\mathcal{C}_{\tilde{k}/\kcyc}$),
we conclude that $\mathcal{C}_{\tilde{k}/\kcyc}$ is a Galois extension of $k^+$.
In particular, the complex conjugation $J \in \Gal(k/k^+) \isom \Gal(\kcyc/\kcyc^+)$ acts naturally on the module $X(\tilde{k})_{\Gal(\tilde{k}/\kcyc)}$.
Next, applying Lemma \ref{Galois L/F} again with $F=k$, $K=T^{\langle 1 \rangle}$, and $H=\Gal(T^{\langle 1 \rangle}/T^{\langle 1\overline{2} \rangle})$ (hence $L=L_2$), we deduce that $L_2/k$ is Galois, and so is the compositum $L_2 \tilde{k}/k$.
By Lemma \ref{G/H act on H}, the group $\Gal(\tilde{k}/k)$ acts naturally on $\Gal(L_2 \tilde{k}/\tilde{k})$.
Let $M$ and $\overline{M}$ denote the subfields of $L_2 \tilde{k}/\tilde{k}$ and $\overline{L_2} \tilde{k}/\tilde{k}$ characterized by 
$$
\Gal(M/\tilde{k}) \isom \Gal(L_2 \tilde{k}/\tilde{k})_{\Gal(\tilde{k}/\kcyc)}
\ \ 
{\rm and}
\ \ 
\Gal(\overline{M}/\tilde{k}) \isom \Gal \left( \overline{L_2}\tilde{k}/\tilde{k} \right)_{\Gal(\tilde{k}/\kcyc)},
$$
respectively.
%Similarly, define $\overline{M}$ to be the corresponding extension in 
%$L(T^{\langle \overline{1} \rangle})\tilde{k}/\tilde{k}$:
%$$
%\Gal(\overline{M}/\tilde{k}) \isom \Gal \left( \overline{L_2}\tilde{k}/\tilde{k} \right)_{\Gal(\tilde{k}/\kcyc)}.
%$$
Note that $\overline{M}$ is the complex conjugate of $M$.
The natural projection $X(\tilde{k}) \surj \Gal(L_2 \tilde{k}/\tilde{k})$ induces a surjection
$X(\tilde{k})_{\Gal(\tilde{k}/\kcyc)} \surj \Gal(M/\tilde{k})$.
%hence
%$M \subset \mathcal{C}_{\tilde{k}/\kcyc}$.

We now show that $M=\overline{M}$.
Fix an extension 
$\tilde{J} \in \Gal(L(\tilde{k})/k^+)$
of $J$, 
and define the inner automorphism 
$\varphi_J \colon \Gal(L(\tilde{k})/\kcyc) \to \Gal(L(\tilde{k})/\kcyc)$
by 
$\varphi_J(g) :=\tilde{J} g \tilde{J}^{-1}$ $(g \in \Gal(L(\tilde{k})/\kcyc))$.
Note that $\tilde{J} g \tilde{J}^{-1} \in \Gal(L(\tilde{k})/\kcyc)$ because $\Gal(L(\tilde{k})/\kcyc)$ is a normal subgroup of $\Gal(L(\tilde{k})/k^+)$, 
and that this map depends on the choice of $\tilde{J}$.
Consider the following commutative diagram of exact sequences:
\begin{eqnarray}\label{CD tilde{k})/kcyc}
\begin{CD}
1 
@>>>
X(\tilde{k})
@>>> 
\Gal(L(\tilde{k})/\kcyc)
@>>> 
\Gal(\tilde{k}/\kcyc)
@>>>
1
\\
& &   @VV \varphi_J V   @VV \varphi_J V  @VV J V
\\
1 
@>>>
X(\tilde{k})
@>>> 
\Gal(L(\tilde{k})/\kcyc)
@>>> 
\Gal(\tilde{k}/\kcyc)
@>>>
1,
\end{CD}
\end{eqnarray}
where the map $J$ denote the action of complex conjugation.
%$$
%1 \to X(\tilde{k}) \to \Gal(L(\tilde{k})/\kcyc) \to %\Gal(\tilde{k}/\kcyc) \to 1.
%$$
Taking the associated Hochschild--Serre spectral sequence yields an exact sequence of $\Gal(k/k^+)$-modules:
$$
H_2(\Gal(\tilde{k}/\kcyc),\Zp) \to X(\tilde{k})_{\Gal(\tilde{k}/\kcyc)} \to \Gal(L(\tilde{k})/\kcyc)^{{\rm ab}} \to \Gal(\tilde{k}/\kcyc) \to 0,
$$
where $\Gal(L(\tilde{k})/\kcyc)^{{\rm ab}}$ denotes the maximal abelian quotient.
Under our assumptions, the maximal abelian subextension of $L(\tilde{k})/\kcyc$ is $L(\kcyc)=\tilde{k}$, yielding an isomorphism 
$\Gal(L(\tilde{k})/\kcyc)^{{\rm ab}} \isom X(\kcyc)=\Gal(\tilde{k}/\kcyc)$.
Furthermore, it is known that there is an isomorphism of $\Gal(k/k^+)$-modules 
$$
H_2(\Gal(\tilde{k}/\kcyc),\Zp) \isom X(\kcyc) \wedge_{\Zp} X(\kcyc),
$$
where $\Gal(k/k^+)$ acts diagonally on the wedge product.
Combining these with the exact sequence above, we obtain a surjection
\begin{eqnarray}\label{X wedge X to X_Gal(tilde{k}/kcyc)}
X(\kcyc) \wedge_{\Zp} X(\kcyc) \surj X(\tilde{k})_{\Gal(\tilde{k}/\kcyc)}
\end{eqnarray}
of $\Gal(k/k^+)$-modules.
Since $X(\kcyc)=X(\kcyc)^-$, the group $\Gal(k/k^+)$ acts trivially on both sides, 
and hence acts trivially on the quotient $\Gal(M/\tilde{k})$ of $X(\tilde{k})_{\Gal(\tilde{k}/\kcyc)}$.
Consequently,
$$
M=\overline{M}.
$$
Since $M/N$ is abelian, 
$\Gal(N\overline{N}/N)$ acts trivially on $\Gal(M/N\overline{N})$ by Lemma \ref{G/H act on H}.
Similarly, $\Gal(N\overline{N}/\overline{N})$ acts trivially on $\Gal(\overline{M}/N\overline{N})=\Gal(M/N\overline{N})$.
Since every element of 
$\Gal(N \overline{N}/\kcyc)$ can be expressed as a product of elements of 
$\Gal(N \overline{N}/N)$ and $\Gal(N \overline{N}/\overline{N})$ by (\ref{k subset kcyc subset NN in case non-Gal deg 6}), 
$\Gal(N\overline{N}/\kcyc)$ acts trivially on $\Gal(M/N\overline{N})$.
Hence, by Lemma \ref{G/H act on H}, the extension $M/\kcyc$ is abelian.
Since $M/\kcyc$ is unramified, we have $M \subset L(\kcyc)=\tilde{k}$, and thus 
$M=\tilde{k}$.
By Nakayama's lemma, it follows that $\Gal(L_2 \tilde{k}/\tilde{k})$ is trivial, 
and therefore $\Gal(L_2/T^{\langle 1 \rangle})$ is also trivial.
Again, by Nakayama's lemma, we conclude that $X(T^{\langle 1 \rangle})$ is trivial.
\end{proof}

%Since $M_1/k^{(1)}$ is abelian, $\Gal(k^{(1)}\overline{k^{(1)}}/k^{(1)})$ acts on $\Gal(M_1/k^{(1)}\overline{k^{(1)}})$ trivially by Lemma \ref{G/H act on H}.
%Similarly, $\Gal(k^{(1)}\overline{k^{(1)}}/\overline{k^{(1)}})$ acts on $\Gal(\overline{M_1}/k^{(1)}\overline{k^{(1)}})$ trivially.
%Consequently, combining these facts with $M_1=\overline{M_1}$, we see that $\Gal(k^{(1)}\overline{k^{(1)}}/k^{(1)} \cap \overline{k^{(1)}})$ acts on $\Gal(M_1/k^{(1)}\overline{k^{(1)}})$ trivially.
%Now, we use the assumption that $a-b+c \not\equiv 0$, and then we can check
%$k^{(1)} \cap \overline{k^{(1)}}=k$.
%Therefore, we obtain that $k \subset \kcyc \subset k^{(1)}\overline{k^{(1)}}$ and that $\Gal(k^{(1)}\overline{k^{(1)}}/\kcyc)$ acts on $\Gal(M_1/k^{(1)}\overline{k^{(1)}})$ trivially.
%Again by Lemma \ref{G/H act on H}, this implies that $M_1/\kcyc$ is abelian.
%Since $M_1/\kcyc$ is unramified, $M_1$ must be contained in $L(\kcyc)=\tilde{k}$, which yields that $Y_{\Gal(\tilde{k}/\kcyc)}$ is trivial.
%By Nakayama's lemma, $Y$ is also trivial and so is $\Gal(L(k^{(1)})/L_0)$ by (\ref{Y isom}).
%This completes the proof.
%\end{proof}

We now prove Proposition \ref{X(T^{1})=0}.
Recall that we chose $\sigma|_k \in \Hom(k,F)$ satisfying the conditions in (\ref{def of primes in F in case non-Gal deg 6}) and Lemma \ref{complex embed in case deg 6}.
Hence, for any subgroup $H \subset \Gal(\tilde{k}/k)$ and any prime ideal $\p$ of $k$ lying above $p$, the decomposition behavior of the prime $\sigma|_k(\p)$ in the fixed field $\tilde{k}^{\sigma|_k(H)}$ under $\sigma|_k(H)$ coincides with that of $\p$ in $\tilde{k}^H$.
Therefore, by the arguments from Lemma \ref{X(T^{1,v2,3})=0} through Lemma \ref{X(k^{(3)})=0 in case deg 6 dim=0,1 and a+b notequiv 0}, we conclude that 
$$
X(T^{\langle \overline{2} \rangle})=0
$$
under the assumptions that $a^2+bc \not\equiv 0$ and $(a^2+bc)+(c^2+ab) \not\equiv 0$.
Indeed, suppose that these conditions hold. 
Consider the field $T^{\langle \overline{2}3\overline{1} \rangle} \leftrightarrow \sigma|_k(\langle \I{1}, \Io{2}, \I{3} \rangle)=\langle \Io{2}, \I{3}, \Io{1} \rangle
$.
Then, by the same argument as in the proof of Lemma \ref{X(T^{1,v2,3})=0}, 
we have $X(T^{\langle \overline{2}3\overline{1} \rangle})=0$.
Next, we deduce $X(T^{\langle \overline{2}3 \rangle})=0$ analogously to Lemma \ref{X(T^{1,v2})=0}, 
since $\sigma|_k(\p_3)=\overline{\p_1}$ is totally inert in $T^{\langle \overline{2}3 \rangle}/k$, 
and it is the unique prime that is totally ramified in the extension $T^{\langle \overline{2}3 \rangle}/T^{\langle \overline{2}3\overline{1} \rangle}$.
Furthermore, note that all primes lying above $\sigma|_k(\overline{\p_2})=\p_3$ are totally inert in the extension $T^{\langle \overline{2}3 \rangle}/N'$,
where $N':=\tilde{k}^{\sigma|_k(\langle \I{1}, \Do{2} \rangle)}=\tilde{k}^{\langle \Io{2}, \D{3} \rangle}$.
Thus, we conclude that the extension $T^{\langle \overline{2}3 \rangle}/N'$ is abelian.
Hence, by the same reasoning as in Lemma \ref{X(k^{(3)})=0 in case deg 6 dim=0,1 and a+b notequiv 0}, it follows that $X(T^{\langle \overline{2} \rangle})=0$.
By applying $\sigma|_k$ repeatedly, we obtain the triviality of $X(T^{\langle 3 \rangle})$, $X(T^{\langle \overline{1} \rangle})$, $X(T^{\langle 2 \rangle})$, and $X(T^{\langle \overline{3} \rangle})$ in the same manner.
Moreover, when $(b^2-ac)+(c^2+ab) \not\equiv 0$, the same argument applies, yielding the same conclusion.

For the remaining cases where $b^2-ac \not\equiv 0$ and $c^2+ab \not\equiv 0$, 
we apply analogous arguments and thus conclude the triviality of the corresponding modules, completing the proof of Proposition \ref{X(T^{1})=0}.
For clarity, Table \ref{remain cases b^2-ac notequiv 0 and c^2-ab notequiv 0} 
summarizes the fields involved in the similar arguments from Lemma \ref{X(T^{1,v2,3})=0} to Lemma \ref{X(k^{(3)})=0 in case deg 6 dim=0,1 and a+b notequiv 0}.
The choice of $N$ and the relevant fields in Lemma \ref{X(k^{(3)})=0 in case deg 6 dim=0,1 and a+b notequiv 0} depends on whether $(a^2+bc)+(c^2+ab) \not\equiv 0$ or $(b^2-ac)+(c^2+ab) \not\equiv 0$.

{
\renewcommand{\arraystretch}{1.3} % 行間を1.3倍に
\begin{longtable}{|c||c|c|c|c|}%[H]
% \begin{center}
%  \begin{tabular}{|c|c|}
\hline
   cases & in Lemma \ref{X(T^{1,v2,3})=0} & in Lemma \ref{X(T^{1,v2})=0} & N & in Lemma \ref{X(k^{(3)})=0 in case deg 6 dim=0,1 and a+b notequiv 0}
\\
\hline
\hline
\multirow{2}{*}{$a^2+bc \not\equiv 0$}
 & 
\multirow{6}{*}{$T^{\langle 1\overline{2}3 \rangle}$}
 & 
\multirow{2}{*}{$T^{\langle 1\overline{2} \rangle}$}
  & $\tilde{k}^{\langle \I{1},\Do{2} \rangle}$ & $T^{\langle 1 \rangle}$
\\
\cline{4-5}
 &  &  & $\tilde{k}^{\langle \D{1}, \Io{2} \rangle}$ & $T^{\langle\overline{2} \rangle}$
\\
\cline{1-1}\cline{3-5}
\multirow{2}{*}{$b^2-ac \not\equiv 0$}
 & 
%\multirow{2}{*}{$T^{\langle 1\overline{2}3 \rangle}$}
 & 
\multirow{2}{*}{$T^{\langle \overline{2}3 \rangle}$} 
 & 
 $\tilde{k}^{\langle \Io{2},\D{3} \rangle}$ & $T^{\langle \overline{2} \rangle}$
\\
\cline{4-5}
 &  &  & $\tilde{k}^{\langle \Do{1}, \I{3} \rangle}$ & $T^{\langle 3 \rangle}$
\\
\cline{1-1}\cline{3-5}
\multirow{2}{*}{$c^2+ab \not\equiv 0$}
 & 
%\multirow{2}{*}{$T^{\langle 1\overline{2}3 \rangle}$}
 & 
\multirow{2}{*}{$T^{\langle 13 \rangle}$}
 & 
 $\tilde{k}^{\langle \D{1},\I{3} \rangle}$ & $T^{\langle 3 \rangle}$
\\
\cline{4-5}
 &  &  & $\tilde{k}^{\langle \I{1}, \D{3} \rangle}$ & $T^{\langle 1 \rangle}$
\\
\hline
%  \end{tabular}
\caption{}
\label{remain cases b^2-ac notequiv 0 and c^2-ab notequiv 0}
% \end{center}
\end{longtable}
}

\subsubsection{Climbing down $\Zp$-extensions}
\label{descent}

Let $\mathcal{C}_{\tilde{k}/k}$ denote the subfield of $L(\tilde{k})/\tilde{k}$ corresponding to $\Gal(\mathcal{C}_{\tilde{k}/k}/\tilde{k}) \isom X(\tilde{k})_{\Gal(\tilde{k}/k)}$,
noting that this differs from the field defined in (\ref{central tilde{k}/kcyc}).
To prove the triviality of $X(\tilde{k})$, it suffices to show that $\mathcal{C}_{\tilde{k}/k}=\tilde{k}$.
Applying Lemma \ref{Galois L/F} with $F=k^+$, $K=\tilde{k}$, and $H=\Gal(\tilde{k}/k)$ (so $L=\mathcal{C}_{\tilde{k}/k}$),
we conclude that $\mathcal{C}_{\tilde{k}/k}$ is a Galois extension of $k^+$.
In particular, the complex conjugation $J \in \Gal(k/k^+) \isom \Gal(\kcyc/\kcyc^+)$ acts naturally on the module $X(\tilde{k})_{\Gal(\tilde{k}/k)}$.
For $i=1,2,3$, denote $Z^{\langle i\overline{i} \rangle} \leftrightarrow \langle \D{i},\Do{i}\rangle$.

\begin{lem}\label{mathcal{C}_{tilde{k}/k}/Z^{langle ioverline{i} rangle} is abelian}
For all $1 \le i \le 3$, the extension $\mathcal{C}_{\tilde{k}/k}/Z^{\langle i\overline{i} \rangle}$ is abelian.
\end{lem}

\begin{proof}
First, we show that $\mathcal{C}_{\tilde{k}/k}/T^{\langle i\overline{i} \rangle}$ is abelian.
Similar to (\ref{CD tilde{k})/kcyc}), we consider the following commutative diagram of exact sequences:
\begin{eqnarray}\label{ex seq Gal(mathcal{C}_{tilde{k}/k}/T^{langle ioverline{i} rangle})}
\begin{CD}
1 
@>>>
X(\tilde{k})_{\Gal(\tilde{k}/k)}
@>>> 
\Gal(\mathcal{C}_{\tilde{k}/k}/T^{\langle i\overline{i} \rangle})
@>>> 
\langle \I{i},\Io{i} \rangle
@>>>
1
\\
& &   @VV J V   @VV \varphi_J V  @VV J V
\\
1 
@>>>
X(\tilde{k})_{\Gal(\tilde{k}/k)}
@>>> 
\Gal(\mathcal{C}_{\tilde{k}/k}/T^{\langle i\overline{i} \rangle})
@>>> 
\langle \I{i},\Io{i} \rangle
@>>>
1.
\end{CD}
\end{eqnarray}
Here, $\varphi_J \colon \Gal(\mathcal{C}_{\tilde{k}/k}/T^{\langle i\overline{i} \rangle}) \to \Gal(\mathcal{C}_{\tilde{k}/k}/T^{\langle i\overline{i} \rangle})$
is the inner automorphism defined analogously to that in (\ref{CD tilde{k})/kcyc})
and the map $J$ denote the action of complex conjugation.
%
%\begin{eqnarray}\label{ex seq Gal(mathcal{C}_{tilde{k}/k}/T^{langle ioverline{i} rangle})}
%1 \to X(\tilde{k})_{\Gal(\tilde{k}/k)} \to \Gal(\mathcal{C}_{\tilde{k}/k}/T^{\langle i\overline{i} \rangle}) \to \langle \I{i},\Io{i} \rangle \to 1.
%\end{eqnarray}
Taking the associated Hochschild--Serre spectral sequence yields the following commutative diagram:
\begin{eqnarray}\label{Hochschild--Serre central tilde{k}/k}
\begin{CD}
\langle \I{i},\Io{i} \rangle \wedge \langle \I{i},\Io{i} \rangle 
@> \del >>
X(\tilde{k})_{\Gal(\tilde{k}/k)}
@>>> 
\Gal(\mathcal{C}_{\tilde{k}/k}/T^{\langle i\overline{i} \rangle})^{\rm ab}
@>>> 
\langle \I{i},\Io{i} \rangle 
@>>>
0_{\ \ }
\\
 @VV J V   @VV J V   @VV \varphi_J V  @VV J V
\\
 \langle \I{i},\Io{i} \rangle \wedge \langle \I{i},\Io{i} \rangle 
@> \del >>
X(\tilde{k})_{\Gal(\tilde{k}/k)}
@>>> 
\Gal(\mathcal{C}_{\tilde{k}/k}/T^{\langle i\overline{i} \rangle})^{\rm ab}
@>>> 
\langle \I{i},\Io{i} \rangle 
@>>>
0.
\end{CD}
\end{eqnarray}
Since $\langle \I{i},\Io{i} \rangle \isom \Zp[\Gal(k/k^+)]$ as $\Gal(k/k^+)$-module, 
the complex conjugation $J \in \Gal(k/k^+)$ acts as inverse on the wedge product $\langle \I{i},\Io{i} \rangle \wedge \langle \I{i},\Io{i} \rangle$.
On the other hand, from the natural projection 
$
X(\tilde{k})_{\Gal(\tilde{k}/\kcyc)} \surj X(\tilde{k})_{\Gal(\tilde{k}/k)}
$
and the fact that $J$ acts trivially on $X(\tilde{k})_{\Gal(\tilde{k}/\kcyc)}$ 
(see (\ref{X wedge X to X_Gal(tilde{k}/kcyc)})), 
it follows that $J$ also acts trivially on $X(\tilde{k})_{\Gal(\tilde{k}/k)}$.
This implies that $\del=0$.
Thus, comparing the exact sequence (\ref{ex seq Gal(mathcal{C}_{tilde{k}/k}/T^{langle ioverline{i} rangle})}) with the upper row of the commutative diagram (\ref{Hochschild--Serre central tilde{k}/k}), we conclude that 
$$
\Gal(\mathcal{C}_{\tilde{k}/k}/T^{\langle i\overline{i} \rangle})^{\rm ab} = \Gal(\mathcal{C}_{\tilde{k}/k}/T^{\langle i\overline{i} \rangle}),
$$ 
which shows that the extension $\mathcal{C}_{\tilde{k}/k}/T^{\langle i\overline{i} \rangle}$ is abelian.

Since $X(T^{\langle i \rangle})=0$ by Proposition \ref{X(T^{1})=0} and $\overline{\p_i}$ is totally ramified in the extension $T^{\langle i \rangle}/T^{\langle i\overline{i} \rangle}$, it follows that $X(T^{\langle i\overline{i} \rangle})=0$.
By definition, the primes lying above $\p_i$ and $\overline{\p_i}$ are totally inert in the extension $T^{\langle i\overline{i} \rangle}/Z^{\langle i\overline{i} \rangle}$.
Therefore, by an argument similar to that in the proof of Lemma \ref{L_2/N_p is abelian in case non-Gal deg 4}, 
we conclude that the extension $\mathcal{C}_{\tilde{k}/k}/Z^{\langle i\overline{i} \rangle}$ is abelian.
\end{proof}

Recall (\ref{equiv (iii) by decomp gp}) and Lemma \ref{|K| not cong 0 iff}, and note that we assume $|K|=2(a+b)((a+b)^2+3c^2) \not\equiv 0$.
Now, consider 
$$
\mathcal{M} \leftrightarrow \left\langle \langle \D{1},\Do{1} \rangle \cap \langle \D{2},\Do{2} \rangle, \I{3} \right\rangle.
$$
Since $a+b \not\equiv 0$ and 
$Z^{\langle 1\overline{1} \rangle} \cap Z^{\langle 2\overline{2} \rangle} \leftrightarrow \langle \D{1},\Do{1}, \D{2},\Do{2}  \rangle
=\langle \gam,x,y,z^{a+b},z^c \rangle$,
we have that 
$k=Z^{\langle 1\overline{1} \rangle} \cap Z^{\langle 2\overline{2} \rangle}$ 
and that the compositum 
$Z^{\langle 1\overline{1} \rangle} Z^{\langle 2\overline{2} \rangle}$ 
is a ${\Zp}^2$-extension over $k$.
Applying again a technique similar to that in \S \ref{another proof in case non-Gal deg 4} and Lemma \ref{X(k^{(3)})=0 in case deg 6 dim=0,1 and a+b notequiv 0}, we obtain the following:

\begin{cor}
The extension $\mathcal{C}_{\tilde{k}/k}/\mathcal{M}$ is abelian. 
\end{cor}

\begin{proof}
By Lemmas \ref{mathcal{C}_{tilde{k}/k}/Z^{langle ioverline{i} rangle} is abelian} and \ref{G/H act on H}, 
the group 
$\Gal(Z^{\langle 1\overline{1} \rangle}Z^{\langle 2\overline{2} \rangle}/k)$,
in particular its subgroup $\Gal(Z^{\langle 1\overline{1} \rangle}Z^{\langle 2\overline{2} \rangle}/\mathcal{M}) \isom \Zp$, 
acts trivially on 
$\Gal(\mathcal{C}_{\tilde{k}}/Z^{\langle 1\overline{1} \rangle}Z^{\langle 2\overline{2} \rangle})$.
Hence, again by Lemma \ref{G/H act on H}, the extension $\mathcal{C}_{\tilde{k}/k}/\mathcal{M}$ is abelian.
\end{proof}

\begin{lem}\label{property of M}
\ 
\begin{itemize}
\setlength{\parskip}{0pt} % 段落間
\setlength{\itemsep}{0pt} % 項目間
\item[{\rm (i)}]
$\mathcal{M}/k$ is a $\Zp$-extension in which both $\p_3$ and $\overline{\p_3}$ are totally inert.
\item[{\rm (ii)}]
In the extension $\mathcal{M}/k$, the prime ideals $\p_1$, $\overline{\p_1}$, $\p_2$, and $\overline{\p_2}$ are ramified.
\item[{\rm (iii)}]
Suppose that $(a^2+bc)+(c^2+ab) \not\equiv 0$ {\rm (}resp. $(b^2-ac)+(c^2+ab) \not\equiv 0${\rm )}.
Then $\p_1$ and $\overline{\p_1}$ {\rm (}resp. $\p_2$ and $\overline{\p_2}${\rm )} do not split in $T^{\langle 3\overline{3} \rangle}/k$.
\end{itemize}
\end{lem}

\begin{figure}[H]
$$ 
%$$挟みでなくてもxy-picは使えるが，この図を中央にする方法がわからない
%点の定義を先にしてから矢印を配置しないとずれる
%\hspace*{200pt}
\begin{xy}
(0,0) *{\text{$\langle \gam,x,y,z \rangle$}}="k",
%(-18,12) *{\text{$\langle \D{1},\Do{1},\I{3} \rangle$}}="P",
(-36,24) *{\text{$\langle \D{1},\Do{1} \rangle$}}="D1",
%(18,12) *{\text{$\langle \D{2},\Do{2},\I{3} \rangle$}}="Q",
(36,24) *{\text{$\langle \D{2},\Do{2} \rangle$}}="D2",
%(-9,30) *{\text{$\langle \mathbb{D}, \I{3} \rangle$}}="inertia",
(0,24) *{\text{$\bullet$}}="M",
%(12,36) ="empty",
(36,48) *{\text{$T^{\langle 3\overline{3} \rangle}$}},="T3",
%(19,24) *{\text{$\leftrightarrow k_\infty$}},
(0,48) *{\text{$\langle \D{1},\Do{1} \rangle \cap \langle \D{2},\Do{2} \rangle$}}="D1 cap D2",
(-51,25) *{\text{$Z^{\langle 1\overline{1} \rangle} \leftrightarrow$}},
(51,25) *{\text{$\leftrightarrow Z^{\langle 2\overline{2} \rangle}$}},
(12,0) *{\text{$\leftrightarrow k$}},
(6,24) *{\text{$\leftrightarrow \mathcal{M}$}},
\ar@{-} "k";"D1"
^(.5){\text{$\substack{\p_3, \overline{\p_3} : \text{ram}}$}}
\ar@{-} "k";"D2"
_(.5){\text{$\substack{\p_1, \overline{\p_1} : \text{ram} }$}}
\ar@{-} "k";"M"
^(.6){\text{$\substack{\p_1, \overline{\p_1} : \text{ram} }$}}
_(.6){\text{$\substack{\p_3, \overline{\p_3} : \\ \text{totally} \\ \text{inert}}$}}
\ar@{-} "D1";"D1 cap D2"
^(.4){\text{$\substack{\p_1, \overline{\p_1} : \text{ram} \\ \p_3, \overline{\p_3} : \text{unr} }$}}
\ar@{-} "D2";"D1 cap D2"
\ar@{-} "M";"D1 cap D2"
^(.4){\text{$\substack{\p_1, \overline{\p_1} : \text{unr} \\ \p_3, \overline{\p_3} : \text{ram}}$}}
\ar@{-} "M";"T3"
%\ar@{-} "inertia";"PinfQinf"
%

%\ar@{-} "D1";"D1 cap D2"
%^{\text{$\substack{\p_3 \overline{\p_3} \\ \text{unram}}$}}
%\ar@{-} "D2";"D1 cap D2"
%_{\text{$\substack{\p_3 \overline{\p_3} \\ \text{unram}}$}}
\end{xy}
$$
\caption{}
\label{mathcal{M}}
\end{figure}

\begin{proof}
(i) 
This follows directly from the equivalence between (\ref{equiv (iii) by decomp gp}) and condition (iii) of Theorem \ref{main thm}(III).
\\
(ii)(iii) (see Figure \ref{mathcal{M}})
It suffices to prove the assertions for $\p_1$ and $\overline{\p_1}$, 
since the cases of $\p_2$ and $\overline{\p_2}$ are analogous.
Assume, for the sake of contradiction, that $\p_1$ (and hence $\overline{\p_1}$) is unramified in the extension $Z^{\langle 2\overline{2} \rangle}/k$.
Then $Z^{\langle 2\overline{2} \rangle}$ would be an infinite extension of $k$ unramified outside $\{ \p_3, \overline{\p_3} \}$ and completely decomposed at $\p_2$.
This contradicts \cite[Lemma 3]{Fujii2015}.
Therefore, $\p_1$ and $\overline{\p_1}$ are ramified in the extension $Z^{\langle 2\overline{2} \rangle}/k$, which implies that 
all primes lying above $\p_1$ and $\overline{\p_1}$ are infinitely ramified in the extension $Z^{\langle 1\overline{1} \rangle}Z^{\langle 2\overline{2} \rangle}/Z^{\langle 1\overline{1} \rangle}$.
On the other hand, the equality 
$Z^{\langle 1\overline{1} \rangle} \mathcal{M}=Z^{\langle 1\overline{1} \rangle}Z^{\langle 2\overline{2} \rangle}$ holds.
Indeed, by Lemma \ref{(a^2+bc)+(c^2+ab) (b^2-ac)+(c^2+ab) not 0}, 
the group $\langle \D{1}, \Do{1}, \I{3} \rangle=\langle \gam, x, y^{-c}z^a, y^{a+b}z^{b+c} \rangle$ is a subgroup of $\langle \gam, x,y,z \rangle$ with finite index.
Hence, $\p_3$ is infinitely ramified in $Z^{\langle 1\overline{1} \rangle}/k$, which implies that $\p_3$ is unramified in 
$Z^{\langle 1\overline{1} \rangle}Z^{\langle 2\overline{2} \rangle}/Z^{\langle 1\overline{1} \rangle}$.
It follows that
$Z^{\langle 1\overline{1} \rangle} \mathcal{M}=Z^{\langle 1\overline{1} \rangle}Z^{\langle 2\overline{2} \rangle}$.
This means that all primes of $\mathcal{M}$ lying above $\p_1$ and $\overline{\p_1}$ are unramified in 
$Z^{\langle 1\overline{1} \rangle}Z^{\langle 2\overline{2} \rangle}/\mathcal{M}$.
Hence, $\p_1$ and $\overline{\p_1}$ are ramified in $\mathcal{M}/k$.

Suppose that $(a^2+bc)+(c^2+ab) \not\equiv 0$.
Then $\langle \I{3}, \Io{3}, \D{1} \rangle=\langle \gam, x, y^{a+b}z^{b+c}, y^{-c}z^a \rangle=\langle \gam, x,y,z \rangle$,
which implies that $\p_1$ (and hence also $\overline{\p_1}$) does not split in the extension $T^{\langle 3\overline{3} \rangle}/k$.
Similarly, if $(b^2-ac)+(c^2+ab) \not\equiv 0$, 
then $\p_2$ and $\overline{\p_2}$ do not split in $T^{\langle 3\overline{3} \rangle}/k$.
\end{proof}

Note that
$\mathcal{M} \subset T^{\langle 3\overline{3} \rangle}$
by the definitions of $\mathcal{M}$ and $T^{\langle 3\overline{3} \rangle}$.
Depending on whether $(a^2+bc)+(c^2+ab) \not\equiv 0$ or $(b^2-ac)+(c^2+ab) \not\equiv 0$, we fix
$$
j=
\begin{cases}
1 
&
\text{if $(a^2+bc)+(c^2+ab) \not\equiv 0$},
\\
2
&
\text{if $(b^2-ac)+(c^2+ab) \not\equiv 0$}. 
\end{cases}
$$
We observe that $\p_j$ does not split in the $\Zp$-extension $T^{\langle 3\overline{3} \rangle} /\mathcal{M}$.
In fact, let $L^{\{ \p_j \}}(\mathcal{M})$ denote the maximal unramified abelian $p$-extension of $\mathcal{M}$ in which $\p_j$ splits completely.
By Lemma \ref{property of M}(ii) and $T^{\langle 3\overline{3} \rangle} \leftrightarrow \langle \I{3},\Io{3} \rangle$, 
the extension $T^{\langle 3\overline{3} \rangle}/\mathcal{M}$ is an unramified $\Zp$-extension.
Since we proved that $X(T^{\langle 3\overline{3} \rangle})=0$ in the proof of Lemma \ref{mathcal{C}_{tilde{k}/k}/Z^{langle ioverline{i} rangle} is abelian}, it follows that  
$L(\mathcal{M})=T^{\langle 3\overline{3} \rangle}$, 
and hence, 
%Since $X(T^{\langle 3\overline{3} \rangle})=0$ by Proposition \ref{X(T^{1})=0} (or see the proof of Lemma \ref{mathcal{C}_{tilde{k}/k}/Z^{langle ioverline{i} rangle} is abelian}), we have
$$
\mathcal{M} \subset L^{\{ \p_j \}}(\mathcal{M}) \subset T^{\langle 3\overline{3} \rangle}.
$$
Combining this with Lemma \ref{property of M}(iii), we conclude that 
$$
\mathcal{M} = L^{\{ \p_j \}}(\mathcal{M}), 
$$
which implies that $\p_j$ does not split in the $\Zp$-extension $T^{\langle 3\overline{3} \rangle} /\mathcal{M}$.

Finally, we prove Proposition \ref{main thm in case deg 6 dim=0,1 and a+b notequiv 0},
by employing a technique similar to that of Minardi \cite{Minardi} again.
Let $\mathbb{I}_3$ (resp. $\mathbb{I}_{\overline{3}}$) denote the inertia subgroup of the abelian group $\Gal(\mathcal{C}_{\tilde{k}/k}/\mathcal{M})$ corresponding to the unique prime lying above $\p_3$ (resp. $\overline{\p_3}$).
Similarly, let $\mathbb{D}_{j}$ 
(resp. $\mathbb{D}_{\overline{j}}$) 
denote the decomposition subgroup of $\Gal(\mathcal{C}_{\tilde{k}/k}/\mathcal{M})$ corresponding to the unique prime lying above $\p_j$
 (resp. $\overline{\p_j}$).
Since $\mathcal{M} = L^{\{ \p_j \}}(\mathcal{M})$, the subgroups $\mathbb{I}_3$, $\mathbb{I}_{\overline{3}}$, $\mathbb{D}_{j}$, and $\mathbb{D}_{\overline{j}}$ generate $\Gal(\mathcal{C}_{\tilde{k}/k}/\mathcal{M})$:
$$
\Gal(\mathcal{C}_{\tilde{k}/k}/\mathcal{M})
=
\langle
\mathbb{I}_3, 
\mathbb{I}_{\overline{3}}, 
\mathbb{D}_{j}, 
\mathbb{D}_{\overline{j}}
\rangle.
$$ 
Moreover, by Lemma \ref{property of M}(i) and (iii), 
the Galois group $\Gal(\mathcal{M}/k)$ acts on these four subgroups.
We know that $\Gal(\mathcal{M}/k) \isom \Zp$ acts naturally on $\Gal(\tilde{k}/\mathcal{M})$, 
and that this action is trivial by Lemma \ref{G/H act on H}.
On the other hand, since $\mathcal{C}_{\tilde{k}/k}/\tilde{k}$ is unramified, 
the natural projections
$$
\mathbb{I}_3 \to \Gal(\tilde{k}/\mathcal{M}),
\ \ 
\mathbb{I}_{\overline{3}} \to \Gal(\tilde{k}/\mathcal{M})
$$
are injective homomorphisms of $\Gal(\mathcal{M}/k)$-modules.
This implies that the action of $\Gal(\mathcal{M}/k)$ on both $\mathbb{I}_3$ and $\mathbb{I}_{\overline{3}}$ is trivial.
By Lemma \ref{property of M}(ii), $\p_j$ is unramified in $\mathcal{C}_{\tilde{k}/k}/\mathcal{M}$, 
and hence $\mathbb{D}_{j} \isom \Zp$.
Since $\p_j$ is infinitely inert in $\tilde{k}/\mathcal{M}$ by Lemma \ref{property of M}(ii) and (iii), 
the kernel $\mathbb{D}_{j} \cap \Gal(\mathcal{C}_{\tilde{k}/k}/\tilde{k})$ of the projection $\mathbb{D}_{j} \to \Gal(\tilde{k}/\mathcal{M})$ is trivial.
Hence, this projection is an injective homomorphism of $\Gal(\mathcal{M}/k)$-modules,
which implies that the action of $\Gal(\mathcal{M}/k)$ on $\mathbb{D}_{j}$ is trivial.
Similarly, the action of $\Gal(\mathcal{M}/k)$ on $\mathbb{D}_{\overline{j}}$ is also trivial.
Summarizing these arguments, we conclude that $\Gal(\mathcal{M}/k)$ acts trivially on $\Gal(\mathcal{C}_{\tilde{k}/k}/\mathcal{M})$.
This shows that $\mathcal{C}_{\tilde{k}/k}$ is abelian over $k$.
Hence, $\mathcal{C}_{\tilde{k}/k}=\tilde{k}$,
and thus $X(\tilde{k})=0$. 
This completes the proof of Proposition \ref{main thm in case deg 6 dim=0,1 and a+b notequiv 0}.

\vspace*{15pt}

{\bf  Questions.}
(1)
Under the conditions that $[k :\Q] >2$ and $X(\tilde{k}) \neq 0$,
is $X(\tilde{k})$ finitely generated as a $\Zp$-module?
No examples satisfying these conditions have been found, nor have any counterexamples been discovered.
If there exists a CM-field $k$ with $[k :\Q] \ge 4$ that satisfies the following conditions,
then it would provide an answer to this question:
\begin{itemize}
\setlength{\parskip}{0pt} % 段落間
\setlength{\itemsep}{0pt} % 項目間
\item[{\rm (i)}]
The prime $p$ splits completely in $k$, and Leopoldt's conjecture holds for $k$ and $p$.
\item[{\rm (ii)}]
$\mu(\kcyc/k)=0$ (this is known to hold if $k$ is abelian over $\Q$, by Ferrero and Washington \cite{Ferrero-Washington}).
\item[{\rm (iii)}]
$X(\tilde{k}) \neq 0$, and $\kcyc$ has an abelian $p$-class field tower; that is, the maximal unramified $p$-extension of $\kcyc$ is abelian.
\end{itemize}
Indeed, in this case, $X(\tilde{k})$ coincides with $\Gal(L(\kcyc)/\tilde{k})$, which is finitely generated as a $\Zp$-module.
\\
(2)
The ideas of the alternative proofs in \S \ref{another proof in case non-Gal deg 4} and \ref{Proof in the case deg 6 |K| notequiv 0} are based on those of the Generalized Greenberg conjecture.
This naturally raises the question of whether the ideas developed in this paper can be applied to the Generalized Greenberg conjecture.

\vspace*{10pt}

{\bf  Acknowledgements.}
The author would like to express his sincere gratitude to Professor Satoshi Fujii, Professor Tsuyoshi Itoh, and Dr. John Minardi for their previous works \cite{Fujii2015}, \cite{Itoh2011}, and \cite{Minardi}, 
on which much of the present proofs are based.
Their insights have contributed greatly to this study. 
The author is also deeply grateful to Professor Satoshi Fujii for pointing out errors and providing valuable comments, and to Professor Tsuyoshi Itoh for his helpful suggestions. 
He further thanks Keiichi Komatsu, Takashi Fukuda, Yasushi Mizusawa, and Kazuaki Murakami for their long-term encouragement.

%{\scriptsize 
{\small

%\vspace*{10pt}
%\noindent
%%
%Keiji OKANO,
%\\
%Department of Teacher Education,
%Tsuru University
%\\
%3-8-1 Tahara, Tsuru-shi, Yamanashi 402-0054, Japan.
%\\
%{\tt 
%okano@tsuru.ac.jp
%}

}

\end{document}